\tikzset{
  vertice/.style={circle,draw=black},
  decoration={markings,mark=at position 0.5 with {\arrow{>}}},
}
\newcommand{\C}{\mathbb{C}}
\newcommand{\N}{\mathbb{N}}
\newcommand{\Z}{\mathbb{Z}}
\newcommand{\R}{\mathbb{R}}
\newcommand{\A}{\mathbb{A}}
\newcommand{\PP}{\mathbb{P}}
\newcommand{\Rmnum}[1]{\expandafter\@slowromancap\romannumeral #1@}
\def\multiset#1#2{\ensuremath{\left(\kern-.3em\left(\genfrac{}{}{0pt}{}{#1}{#2}\right)\kern-.3em\right)}}
\providecommand{\keywords}[1]{\textbf{\textit{Keywords:}} #1}
\theoremstyle{plain}
\newtheorem{theorem}{Theorem}[section]
\newtheorem{lemma}[theorem]{Lemma}
\newtheorem{proposition}[theorem]{Proposition}
\newtheorem{corollary}[theorem]{Corollary}
\newtheorem{example}[theorem]{Example}
\DeclareMathOperator{\Ext}{Ext}
\DeclareMathOperator{\Hom}{Hom}
\DeclareMathOperator{\trep}{\mathbf{trep}}
\DeclareMathOperator{\rep}{\mathbf{rep}}
\DeclareMathOperator{\diag}{diag}
\DeclareMathOperator{\Supp}{supp}
\DeclareMathOperator{\iss}{\mathbf{iss}}
\DeclareMathOperator{\GL}{GL}
\DeclareMathOperator{\SL}{SL}
\DeclareMathOperator{\PGL}{PGL}
\DeclareMathOperator{\PSL}{PSL}
\DeclareMathOperator{\M}{M}
\DeclareMathOperator{\B}{B}
\DeclareMathOperator{\Stab}{Stab}
\DeclareMathOperator{\Spec}{Spec}
\DeclareMathOperator{\Aut}{Aut}
\numberwithin{equation}{section}
\title{Representation theory of $\Z_2^{*n}$}
\author{Kevin De Laet}
\affil{Department of Mathematics-Computer Sciences,\\ University of Antwerp \\
Middelheimlaan 1, B-2020 Antwerpen (Belgium) \\ {\tt kevin.delaet2@uantwerpen.be}}
\date{}
\begin{document}

\maketitle
\begin{abstract}
We study the representations of the group $\Z_2^{*n}$, the free product of $\Z_2$ with itself $n$-times. We use the action of $B_n = S_2 \wr S_n $ as algebra automorphisms on the group algebra $\C(\Z_2^{*n})$ to find the components that contain simple representations and to study smoothness of their GIT-quotients. In particular, all the possible local quiver settings are studied for the component containing the standard $n$-dimensional representation of $S_{n+1}$.
\end{abstract}
\keywords{local quivers, free products}\footnote{\textit{MSC Classification:} 20E06,16G20}
\tableofcontents
\section{Introduction}
In \cite{bruyn2010trees}, a procedure was explained to study the representation theory of free products of semisimple algebras using quiver technology. The main focus of this paper was to study the representation theory of the aritmethic groups $\SL_2(\Z) \cong \Z_4 *_{\Z_2} \Z_6$ and $\PSL_2(\Z) \cong \Z_2 * \Z_3$, as was later done in \cite{bruyn2003one}. In particular, the fact that $\PSL_2(\Z)$ was a quotient of the third braid group $\B_3$ was used to study knot invertibility in \cite{bruyn2011dense}.
\par In this paper, we use this procedure to study the representation theory of 
$$
\Z_2^{*n} = \underbrace{\Z_2 * \Z_2 * \ldots * \Z_2}_{n \text{ times}}= \langle e_1, \ldots, e_n : e_i ^2=1\rangle.
$$
The motivation for studying the representation theory of this discrete family of groups are:
\begin{itemize}
\item for $n \geq 3$, $\Z_2^{*n}$ is a group of finite index in $\Z_2 * \Z_3$ by \cite{kulkarni1991arithmetic} (for $n=3$ it is a normal subgroup), and
\item the symmetric group $S_{n+1}$ is a quotient of $\Z_2^{*n}$ by taking the generators
$$
\{(1,i): 2\leq i \leq n+1\} \text{ or } \{(i,i+1): 1\leq i \leq n\}.
$$
\end{itemize}
To make calculations more manageable, one has $S_n \subset \Aut(\Z_2^{*n})$ or even $B_n \subset \Aut(\C(\Z_2^{*n}))$, $B_n$ being the hyperoctahedral group of order $n!2^n$.
\par In this paper, we will classify the components of $\rep_m \Z_2^{*n}$ that contain simple representations and prove that $\iss_m \Z_2^{*n}=\rep_m \Z_2^{*n}/\GL_m(\C)$ is almost never smooth.
\par As an application, we will study the component of $\rep_n \Z_2^{*n}$ containing the standard representation of $S_{n+1}$. This will correspond to the $\alpha(n,n)$-dimensional representations of a quiver $Q_n$ with dimension vector $\alpha(n,n)=(n-1,1)_{i=1}^n$. The main theorem of this section will be theorem \ref{theorem:mainintro}.
\begin{theorem}
The possible local quiver settings in $\dim \iss_{\alpha(n,n)}\Z_2^{*n}$ are determined by the following data:
\begin{itemize}
\item a partition $\mathbf{A}=\{A_i\}_{i=1}^l$ of $N$, and
\item a positive $l$-tuple $\mathbf{k}=(k_i)_{i=1}^l \in \N^l$ satisfying $1\leq k_i \leq |A_i|$ for all $1\leq i \leq l$.
\end{itemize}
A local quiver setting $(\mathbf{A},\mathbf{k})$ degenerates to $(\mathbf{A'},\mathbf{k'})$ if and only if
\begin{itemize}
\item $\mathbf{A'}$ is a refinement of $\mathbf{A}$, and
\item for all $1\leq i\leq l$, if $A_i = \bigsqcup_{j=1}^{l_i} A'_{i_j}$, then $k_i \geq \sum_{j=1}^{l_i} k'_{i_j}$. 
\end{itemize}
\label{theorem:mainintro}
\end{theorem}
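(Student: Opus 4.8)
The plan is to obtain every local quiver setting from the étale-local structure of the quotient variety $\iss_{\alpha(n,n)}\Z_2^{*n}$, using Luna's slice theorem together with the Le Bruyn--Procesi description of local quivers, and then to identify the closure (specialization) order on $\iss_{\alpha(n,n)}\Z_2^{*n}$ with the combinatorial degeneration order in the statement. Since $\C(\Z_2^{*n})$ is a free product of semisimple algebras it is hereditary, so $\rep_{\alpha(n,n)}\Z_2^{*n}$ is smooth and the local quivers carry no relations. Concretely, a point of $\iss_{\alpha(n,n)}\Z_2^{*n}$ is a closed orbit, i.e.\ a semisimple representation $M=\bigoplus_j S_j^{\oplus d_j}$ with the $S_j$ pairwise non-isomorphic simples of dimension vector $\beta_j$ and $\sum_j d_j\beta_j=\alpha(n,n)$; Luna's theorem makes $(\iss_{\alpha(n,n)}\Z_2^{*n},M)$ étale-locally isomorphic to $(\iss_{\mathbf d}Q_M,0)$, where $Q_M$ has the classes $[S_j]$ as vertices, dimension vector $\mathbf d=(d_j)_j$, and $\delta_{jj'}-\chi_{Q_n}(\beta_j,\beta_{j'})=\dim\Ext^1(S_j,S_{j'})$ arrows from $[S_j]$ to $[S_{j'}]$. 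Thus the whole theorem reduces to (a) classifying the decomposition types occurring in this component and (b) determining their closure order.

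For part (a) I would use the explicit normal form for representations in this component established earlier, in which each $e_i$ acts as an involution of signature $(n-1,1)$. The key step is to show that the isomorphism classes of simple constituents that can occur are indexed by the blocks of a partition $\mathbf A=\{A_i\}$ of $N$, so that the vertex set of $Q_M$ is exactly $\{A_1,\dots,A_l\}$, and that the admissible local invariants are precisely the tuples $\mathbf k$ with $1\le k_i\le|A_i|$. Here $k_i$ is the dimension datum attached to the constituent supported on $A_i$, and the bound $1\le k_i\le|A_i|$ is forced by combining simplicity of each constituent with the global constraint $\sum_j d_j\beta_j=\alpha(n,n)$. The $B_n$-action is what makes this tractable: it permits reducing to one representative per $B_n$-orbit before doing the dimension-vector bookkeeping, and the Euler-form computation of $\chi_{Q_n}$ then pins down the arrows of $Q_M$, so that the pair $(\mathbf A,\mathbf k)$ really does determine the local quiver setting.

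For part (b), the closure order is governed by which semisimple types arise as limits of others, and a limit can only break a simple constituent into a direct sum of smaller simple constituents. Breaking the constituent supported on $A_i$ produces constituents supported on a partition $A_i=\bigsqcup_j A'_{i_j}$, which is exactly the refinement condition on $\mathbf A$; the dimension bookkeeping along such a degeneration, in which the slack released by the split is absorbed by the constituents connecting the new blocks, yields the inequality $k_i\ge\sum_j k'_{i_j}$. I expect the inequality to be the main obstacle, and it must be proved in both directions: necessity should follow from upper semicontinuity of the invariants $k_i$ under specialization (equivalently, orbit closures cannot increase these ranks), while sufficiency requires exhibiting, for every refinement $\mathbf A'$ and every admissible $\mathbf k'$ with $k_i\ge\sum_j k'_{i_j}$, an explicit one-parameter family of representations of type $(\mathbf A,\mathbf k)$ degenerating to one of type $(\mathbf A',\mathbf k')$. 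Constructing these degenerating families uniformly in $n$, and checking that transitivity of the two combinatorial conditions matches composition of degenerations, is the delicate part; once it is in place, the remaining verifications --- that the Euler form gives the stated arrows and that no further settings occur --- are routine bookkeeping with $\alpha(n,n)$.
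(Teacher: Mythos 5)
Your overall route---\'etale local structure via Luna slices and the local quiver description, then a classification of semisimple decomposition types, then the closure order---is exactly the route the paper takes for its general result (theorem \ref{theorem:main}), so this is not a different approach; the issue is that both pivotal steps are left as intentions, and each hides a concrete missing idea. For the classification step, everything hinges on the special shape of $\alpha(n,n)=(n-1,1)_{i=1}^n$: every minus-coordinate equals $1$, hence (i) any simple constituent $V_i\neq\psi_\emptyset$ occurs with multiplicity exactly one, (ii) the sets $A_i=\{j\in N:(\beta(i))_j^-=1\}$ attached to distinct nontrivial constituents are pairwise disjoint and cover $N$, and (iii) the dimension vector of the constituent supported on $A_i$ is forced to be $\alpha(n,k_i,A_i)$, i.e.\ $(k_i-1,1)$ on $A_i$ and $(k_i,0)$ off $A_i$. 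Only after (i)--(iii) does Proposition \ref{prop:alphamsimpel} give the bound $1\leq k_i\leq |A_i|$, and only then is the local quiver setting determined by the pair $(\mathbf{A},\mathbf{k})$ alone. You assert this indexing (``the key step is to show\dots'') but never derive it, and nothing in your argument actually uses the specific form of $\alpha(n,n)$, which is what makes all three facts true.

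For the degeneration order you explicitly leave the construction of degenerating families open as ``the delicate part''; that is the second genuine gap, and it has a short filling you missed. No explicit one-parameter family is needed: since $\alpha(n,k_i,A_i)$ is a simple dimension vector, the simple representations form a dense (open) subset of the irreducible variety $\rep_{\alpha(n,k_i,A_i)}\Z_2^{*n}$, so \emph{every} semisimple point of that component---in particular $\psi_\emptyset\oplus V'$ with $V'$ simple of type $(A_i,k_i-1)$, and $V'\oplus V''$ with supports $A'_i\sqcup A''_i=A_i$ and $k'+k''=k_i$---is a limit of simples. Transitivity of degeneration then reduces an arbitrary pair (refinement of $\mathbf{A}$, inequalities $k_i\geq\sum_j k'_{i_j}$) to compositions of these two elementary moves, so the uniform-in-$n$ construction you worry about is unnecessary. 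Conversely, your appeal to ``upper semicontinuity of the invariants $k_i$'' is not an argument as stated, since $k_i$ is not a priori a semicontinuous function on the quotient; the correct necessity argument is that a family of simples with dimension vector $\alpha(n,k_i,A_i)$ degenerates inside the same component $\rep_{\alpha(n,k_i,A_i)}\Z_2^{*n}$, so the semisimplification of the limit has constituents whose supports partition $A_i$ and whose dimensions sum to at most $k_i$, the deficit being absorbed by copies of $\psi_\emptyset$.
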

In fact, we will describe all possible local quiver settings for the dimension vectors $\alpha(n,m)=(m-1,1)_{i=1}^n$, $1\leq m\leq n$.
\subsection{Notation} Some general notations will be used in the text:
\begin{itemize}
\item For a quiver $Q$ on $m$ vertices and a dimension vector $\alpha = (a_1,\ldots,a_m)$, we will denote with $\Supp(\alpha)$ the support of $\alpha$, that is, the full subquiver on those vertices $i$ such that $a_i \neq 0$.
\item We will see that $\rep_m \Z_2^{*n}$ is the union of ($\GL_m(\C)$-orbits of) open subsets of components in $\rep_{nm} Q_n = \sqcup_{|\alpha|=nm} \rep_\alpha Q_n$ for a certain quiver $Q_n$. The components of $\rep_{nm} Q_n$ that contain some representations of $\Z_2^{*n}$ are determined by dimension vectors $\alpha = (a_1^+,a_1^-;a_2^+,a_2^-;\ldots;a_n^+,a_n^-) = (a_i^+,a_i^-)_{i=1}^n$ such that $a_i^++a_i^- = m$ is constant for all $1\leq i \leq n$. We will then set 
$$
\rep_\alpha \Z_2^{*n} = \rep_m \Z_2^{*n} \cap \rep_\alpha Q_n. 
$$
It will be clear that we have $\rep_m \Z_2^{*n} = \GL_m(\C) \cdot (\sqcup_{|\alpha|=nm} \rep_\alpha \Z_2^{*n})$.
\item For a group $G$, we set $\iss_n G = \rep_n G/\GL_n(\C)$. Similarly, for a quiver $Q$ on $m$ vertices and a dimension vector $\alpha=(a_1,\ldots,a_m)$, we will write $\iss_\alpha Q = \rep_\alpha Q/\GL_\alpha(\C)$, with
$$
\GL_\alpha(\C) = \GL_{a_1}(\C) \times \GL_{a_2}(\C) \times \ldots \times \GL_{a_m}(\C)
$$
working on $\rep_\alpha Q$ by basechange in the vertices.
\item We will also write $\iss_\alpha \Z_2^{*n} = \rep_\alpha \Z_2^{*n}/\GL_\alpha(\C)$.
\item Let $A,B$ be 2 sets. Then we denote the symmetric difference of $A$ and $B$ by $A \Delta B$, which is equal to $(A \cup B) \setminus (A \cap B)$.
\item We will have to work with symmetric quivers. Therefore, we make the following conventions regarding arrows between two vertices:
\begin{itemize}
\item \begin{tikzpicture}[
    implies/.style={double,double equal sign distance,implies-implies},
    dot/.style={shape=circle,fill=black,minimum size=2pt,
                inner sep=0pt,outer sep=2pt}]
\node[vertice,circle] (a) at ( -1, 0) {};
\node[vertice,circle] (b) at (  1, 0) {};
\tikzset{every node/.style={fill=white},rectangle}
\draw[->,font=\scriptsize]
    (a) edge[<->] (b)
    ;
\end{tikzpicture}: one arrow back and forth,
\item \begin{tikzpicture}[
    implies/.style={double,double equal sign distance,implies-implies},
    dot/.style={shape=circle,fill=black,minimum size=2pt,
                inner sep=0pt,outer sep=2pt}]
\node[vertice,circle] (a) at ( -1, 0) {};
\node[vertice,circle] (b) at (  1, 0) {};
\tikzset{every node/.style={fill=white},rectangle}
\draw[->,font=\scriptsize]
    (a) edge[implies](b)
    ;
\end{tikzpicture}: two arrows back and forth, and
\item \begin{tikzpicture}[
    implies/.style={double,double equal sign distance,implies-implies},
    dot/.style={shape=circle,fill=black,minimum size=2pt,
                inner sep=0pt,outer sep=2pt}]
\node[vertice,circle] (a) at ( -1, 0) {};
\node[vertice,circle] (b) at (  1, 0) {};
\tikzset{every node/.style={fill=white},rectangle}
\draw[->,font=\scriptsize]
    (a) edge[implies] node[vertice,rectangle,anchor=center]{$k$}(b)
    ;
\end{tikzpicture}: $k$ arrows back and forth. The value of $k$ is omitted if $k=1,2$.
\end{itemize}
\end{itemize}

\section{The tame case $\Z_2 * \Z_2$}
\label{sec:tame}
The quiver $Q_2$ used for $\Z_2 * \Z_2$ is the following one
\begin{center}
\begin{tikzpicture}[scale=1]
   \node[vertice,circle] (a) at ( -5, 0) {};
   \node[vertice,circle] (b) at ( -5, -2) {};
   \node[vertice,circle] (c) at (  0, 0) {};
   \node[vertice,circle] (d) at (  0, -2) {};
%   \tikzset{every node/.style={fill=white}} 
   \path[->,font=\scriptsize,>=angle 90]
    (a) edge node[auto]{$B_{++}$} (c)
    (a) edge node[left of=a]{$B_{+-}$} (d)
    (b) edge node[right of=b]{$B_{-+}$} (c)
    (b) edge node[auto]{$B_{--}$} (d)
    ;
\end{tikzpicture},
\end{center}
which is a tame quiver. This fact is also visible in the group algebra $\C(\Z_2*\Z_2) = \C \langle x,y \rangle/(x^2-1,y^2-1)$. This group algebra is a Clifford algebra over its center $\C[T]$ with $T = \frac{1}{2}(xy+yx)$ and associated quadratic form over $\C[T]$ determined by the symmetric matrix
$$
\begin{bmatrix}
1 & T \\ T & 1
\end{bmatrix}.
$$
Accordingly, $\C(\Z_2*\Z_2)$ is a finite module over its center and a Cayley-Hamilton algebra of degree 2. Hence, the only simple representations are of dimension 1 or 2. The representation variety $\rep_2 \Z_2 * \Z_2$ is a disjoint union of 9 affine varieties,
$$
\rep_2 \Z_2 * \Z_2 = \bigsqcup_{(i,j) \in \Z_2^2} A_{i,j} \sqcup \bigsqcup_{(i,j) \in \Z_2^2} B_{i,j}\sqcup C, 
$$
with $\dim A_{i,j} = 0, \dim B_{i,j} = 2$ and $\dim C = 4$. We have $C = \trep_2 \C(\Z_2 * \Z_2)$, that is, the variety parametrizing trace preserving representations of $\C(\Z_2*\Z_2)$. Similarly, we find
$$
\iss_2 \Z_2 * \Z_2 = \{8 \text{ pts}\} \sqcup \A^1.
$$
We have an action of the dihedral group $D_4$ on $\C(\Z_2 * \Z_2)$ as algebra automorphisms, by taking the automorphism $(x,y) \mapsto (y,x)$ and $(x,y) \mapsto (-x,y)$. Consequently, $D_4$ acts on the 4 points $A_{i,j}$ as the 4 points of a square. The four components of dimension $2$ together form a $D_4$-variety, with each component a $\Z_2$-variety. The most interesting variety is $C$, which is a $D_4$-orbit. The one quiver $Q'_2$ is
\begin{center}
\begin{tikzpicture}[scale=1]
   \node[vertice,circle] (a) at ( 1, 1) {};
   \node[vertice,circle] (b) at ( 1, -1) {};
   \node[vertice,circle] (c) at (  -1, 1) {};
   \node[vertice,circle] (d) at (  -1, -1) {};
%   \tikzset{every node/.style={fill=white}} 
   \path[->,font=\scriptsize,>=angle 90]
    (a) edge[<->] (d)
    (b) edge[<->] (c)
    ;
\end{tikzpicture}
\end{center}
This quiver is not connected (and is in fact the only disconnected one quiver for the free product of cyclic groups $\Z_p * \Z_q$). The four nodes correspond to the four one-dimensional representations $\psi_{\pm,\pm}$ defined by $\psi_{\pm,\pm}(x) = \pm 1, \psi_{\pm,\pm}(y) = \pm 1$. The arrows go from $\psi_{\pm \pm}$ to $\psi_{\mp,\mp}$.
\par The one quiver in this case is not necessary to parametrize open subsets of simple representations, as $A$ is a finite module over its center. On the downside, all simple representations of $\Z_2 * \Z_2$ are of dimension $\leq 2$.
\section{The set-up}
The quiver $Q_n$ necessary to describe the representations of $\Z_2^{*n}$ is the following: let $G_i$ be the subgroup of $\Z_2^{*n} = \langle e_1,\ldots,e_n : e_i^2=1 \rangle$ generated by $e_i$. Then $Q_n$ has $2n$ vertices, subdivided in pairs (the pair $T_i$ corresponds to the two one-dimensional representations of $G_i$). Pick $T_1$, then the arrows of $Q_n$ go from each vertex of $T_1$ to every other vertex of $Q_n$ belonging to some $T_i$, excluding $i=1$. We label the arrows accordingly $B^i_{kl}$, so $(k,l) \in \{+,-\}^2$ and $2 \leq i \leq n$, with the arrow $B^i_{kl}$ going from the $l$th vertex of $T_1$ to the $k$th vertex of $T_i$. The quiver $Q_n$ becomes figure \ref{fig2}.
\begin{figure}
\begin{center}
\begin{tikzpicture}[scale=2/5]
   \node[vertice,circle] (a) at ( -8, -6) {$a_1^+$};
   \node[vertice,circle] (b) at ( -8, -11) {$a_1^-$};
   \node[vertice,circle] (c) at (  0, -1) {$a_2^+$};
   \node[vertice,circle] (d) at (  0, -4) {$a_2^-$};
   \node[vertice,circle] (e) at (  0, -7) {$a_3^+$};
   \node[vertice,circle] (f) at (  0, -10) {$a_3^-$};
   \node[vertice,circle] (g) at (  0, -15) {$a_n^+$};
   \node[vertice,circle] (h) at (  0, -18) {$a_n^-$};   
%   \tikzset{every node/.style={fill=white}} 
   \draw[->,font=\scriptsize]
    (a) edge (c)
    (a) edge (d)
    (a) edge (e)
    (a) edge (f)
    (a) edge (g)
    (a) edge (h)
    (b) edge (c)
    (b) edge (d)
    (b) edge (e)
    (b) edge (f) 
    (b) edge (g)
    (b) edge (h)   
    ;
   \draw[dotted,font=\scriptsize]
    (f) edge (g);
\end{tikzpicture}
\end{center}
\caption{The associated quiver $Q_n$ to $\Z_2^{*n}$}
\label{fig2}
\end{figure}
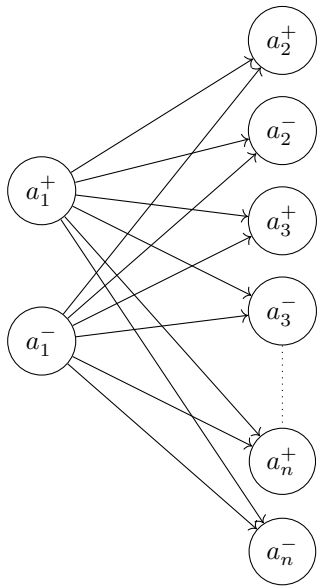
\par To an $\alpha$-dimensional representation of $Q_n$ with $\alpha=(a_i^+,a_i^-)_{i=1}^n$ a dimension vector fulfilling $a_i^+ +a_i^-=m$ for all $1 \leq i \leq n$ corresponds the $m$-dimensional representation $\phi$ of $\Z_2^{*n}$ defined by
$$
\phi(e_1)= \begin{bmatrix}
1_{a_1^+} & 0 \\0 & -1_{a_1^-}
\end{bmatrix},
\phi(e_i)= \begin{bmatrix}
B_{++}^i & B_{+-}^i \\ B_{-+}^i & B_{--}^i
\end{bmatrix}^{-1} \begin{bmatrix}
1_{a_i^+} & 0 \\0 & -1_{a_i^-}
\end{bmatrix}
\begin{bmatrix}
B_{++}^i & B_{+-}^i \\ B_{-+}^i & B_{--}^i
\end{bmatrix}
$$
for $2 \leq i \leq n$.
\par The hyperoctahedral group $B_n = S_2 \wr S_n = \Z_2^n \rtimes S_n$ acts on the group algebra $\C(\Z_2^{*n})$ as automorphisms, the $S_n$-action permuting the generators $e_i$ and the $S_2$-action coming from the algebra automorphism $\varphi(e_1) = -e_1, \varphi(e_i) = e_i, 2 \leq i \leq n$. It is clear that the same group also acts on the dimension vectors $\alpha = (a_i^+,a_i^-)_{i=1}^n$, with $S_n$ permuting the pairs $(a_i^+,a_i^-)$ and $\varphi$ switching $a_1^+$ with $a_1^-$ and leaving the rest fixed.
\par We can already analyze the number of components of $\rep_m \Z_2^{*n}$.
\begin{theorem}
The variety $\rep_m \Z_2^{*n}$ is the disjoint union of $(m+1)^n$ components, labeled by $\{0,1,\ldots,m\}^n$. The group $B_n$ acts on these components, with the number of $B_n$-orbits equal to
\begin{itemize}
\item $\multiset{\frac{m}{2}+1}{n} = \binom{\frac{m}{2}+n}{n}$ if $m$ is even, and
\item $\multiset{\frac{m+1}{2}}{n} = \binom{\frac{m-1}{2}+n}{n}$ if $m$ is odd.
\end{itemize}
\label{th:numberofcomponent}
\end{theorem}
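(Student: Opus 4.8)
The plan is to exploit the free-product structure to reduce $\rep_m \Z_2^{*n}$ to a product of $n$ copies of the variety of involutions, and then to compute the $B_n$-orbits on its components by quotienting first by the normal subgroup $\Z_2^n$ and afterwards by $S_n$. First I would observe that a representation $\phi \in \rep_m \Z_2^{*n}$ is nothing but an unconstrained choice of an involution $\phi(e_i) \in \M_m(\C)$ with $\phi(e_i)^2 = 1_m$ for each $1 \le i \le n$, since in the free product there are no relations coupling distinct generators. Hence $\rep_m \Z_2^{*n} \cong V^n$, where $V = \{ A \in \M_m(\C) : A^2 = 1_m \}$. The variety $V$ is the disjoint union of the conjugacy classes $C_a = \GL_m(\C)/(\GL_a(\C)\times\GL_{m-a}(\C))$ of involutions with exactly $a$ eigenvalues equal to $+1$, for $0\le a\le m$. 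Each $C_a$ is the image of the connected group $\GL_m(\C)$ and is therefore irreducible and connected; moreover it is cut out inside $V$ by the locally constant condition $\mathrm{tr}(A)=2a-m$, so $C_a$ is open and closed in $V$ and is thus a connected component. Distributing the product over the disjoint union gives
$$
\rep_m \Z_2^{*n} = \bigsqcup_{(a_1,\ldots,a_n)\in\{0,\ldots,m\}^n} C_{a_1}\times\cdots\times C_{a_n},
$$
a disjoint union of $(m+1)^n$ irreducible connected components labelled by $\{0,\ldots,m\}^n$, where $a_i=a_i^+$ recovers the dimension vectors $\alpha$ of the set-up. This settles the first assertion.

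Next I would identify the induced $B_n$-action on the label set $\{0,\ldots,m\}^n$. The $S_n$-factor permutes the generators $e_i$ and hence permutes the labels $a_i$. The automorphism $\varphi$, sending $e_1\mapsto -e_1$ and fixing the remaining generators, replaces $\phi(e_1)$ by $-\phi(e_1)$, which interchanges its $+1$- and $-1$-eigenspaces; on labels this is the involution $a_1 \mapsto m-a_1$, leaving the other coordinates fixed. Thus $B_n=\Z_2^n\rtimes S_n$ acts on $\{0,\ldots,m\}^n$ with $\Z_2^n$ acting coordinatewise by $a\mapsto m-a$ and $S_n$ permuting coordinates, and this is exactly the induced action on the set of components.

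Finally I would count orbits in two stages, using that $\Z_2^n$ is normal in $B_n$ with quotient $S_n$, so that the $B_n$-orbits on $\{0,\ldots,m\}^n$ coincide with the $S_n$-orbits on $\{0,\ldots,m\}^n/\Z_2^n$. The coordinatewise action identifies this quotient with $R^n$, where $R=\{0,\ldots,m\}/(a\sim m-a)$; counting the orbits of $a\mapsto m-a$ on $\{0,\ldots,m\}$ gives $|R| = \tfrac{m}{2}+1$ when $m$ is even (the value $a=\tfrac{m}{2}$ being the unique fixed point) and $|R| = \tfrac{m+1}{2}$ when $m$ is odd. The residual $S_n$ then acts on $R^n$ by permuting factors, whose orbits are the size-$n$ multisets drawn from $R$, numbering $\multiset{|R|}{n}=\binom{|R|+n-1}{n}$; substituting the two values of $|R|$ yields $\binom{m/2+n}{n}$ and $\binom{(m-1)/2+n}{n}$ respectively, as claimed. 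The only point requiring genuine care is the reduction to a clean product of involution varieties together with the verification that each factor $C_a$ is truly a connected component rather than merely a constructible stratum; once this is secured, the orbit count is a routine application of the normal-subgroup quotient and the multiset formula.
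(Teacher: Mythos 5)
Your proposal is correct, and it takes a somewhat more self-contained route than the paper. For the component count, the paper stays inside its quiver setup: the components of $\rep_m \Z_2^{*n}$ correspond to dimension vectors $(a_i^+,a_i^-)_{i=1}^n$ of $Q_n$ with $a_i^++a_i^-=m$, so choosing $a_i^+\in\{0,\ldots,m\}$ for each $i$ gives $(m+1)^n$ of them. You bypass the quiver entirely, writing $\rep_m\Z_2^{*n}\cong V^n$ with $V$ the variety of involutions in $\M_m(\C)$, and you verify explicitly that each conjugacy class $C_a$ is open, closed and connected (locally constant trace plus the surjection from the connected group $\GL_m(\C)$), so that the products $C_{a_1}\times\cdots\times C_{a_n}$ are genuinely the connected components; this spells out a point the paper leaves implicit in its set-up. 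For the orbit count, the paper chooses a canonical representative in each $B_n$-orbit, namely the unique weakly decreasing tuple with entries at least $\tfrac{m}{2}$ (resp. $\tfrac{m+1}{2}$), and counts those directly, whereas you quotient first by the normal subgroup $\Z_2^n$ to obtain $R^n$ with $R=\{0,\ldots,m\}/(a\sim m-a)$, and then count $S_n$-orbits on $R^n$ as multisets, giving $\multiset{|R|}{n}$. The two counts are equivalent --- a multiset drawn from $R$ corresponds to the paper's decreasing representative after replacing each class by its larger member --- but your two-stage argument via the semidirect product structure $B_n=\Z_2^n\rtimes S_n$ is cleaner group theory and requires no uniqueness-of-representative check, while the paper's version is shorter given the machinery it has already built.
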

\begin{proof}
The possible dimension vectors for $Q_n$ are determined by choosing for each $1\leq i \leq n$ an element $0\leq a_i^+ \leq m$, because then $a_i^- = m-a_i^+$. This gives $(m+1)^n$ components, labeled by $\{0,1,\ldots,m\}^n$.
\par By the $B_n$-action, each dimension vector lies in a unique $B_n$-orbit of a dimension vector $(a_i^+,a_i^-)_{i=1}^n$ such that
$$
m\geq a_1^+ \geq a_2^+ \geq \dots a_n^+ \geq \frac{m}{2} \text{ or } \frac{m+1}{2}.
$$
This leads to the claimed multiset coefficients.
\end{proof}
\subsection{The one quiver $Q'_n$}
\par The generators of the Abelian semigroup
$$
S(n)=\{ (a_i^+,a_i^-)_{i=1}^n \in \N^{2n}: a_1^+ + a_1^-=a_2^+ + a_2^-=\ldots = a_n^+ + a_n^-\},
$$
are the dimension vectors in the $B_n$-orbit of $(1,0)_{i=1}^n$ and
correspond to the simple representations of $\Z_2^n = \Z_2^{*n}/[\Z_2^{*n},\Z_2^{*n}]$, so there are $2^n$ generators. Set $S(n)_m = \{\alpha \in S(n):a_1^+ + a_1^-=m\}$. Let $N= \{1,\ldots,n\}$. Then we can label all characters with the elements of $2^{N}$. For $A \in 2^N$, let $\psi_A$ be the character of $\Z_2^n$ corresponding to $\psi_A(e_i)=-1$ if and only if $i \in A$. In terms of generators of the semigroup, $\psi_A$ corresponds to the dimension vector $\alpha_A = (a_i^+(A),a_i^-(A))_{i=1}^n$ with
\begin{gather*}
i \not\in A \Leftrightarrow (a_i^+(A),a_i^-(A))=(1,0),\\
i \in A \Leftrightarrow (a_i^+(A),a_i^-(A))=(0,1).
\end{gather*}
Regarding $S(n)$ and the generators $\alpha_A$, we find
\begin{proposition}
Let $F_n$ be the free multiplicative commutative semigroup generated by $e_A, A \in 2^N$, graded by $\deg(e_A)=1$. 
Then the semigroup $S(n)$ is a quotient of $F_n$ by the relations 
$$
e_A e_B = e_{A \cup B} e_{A \cap B}
$$
for all $A,B \in 2^N$.
\end{proposition}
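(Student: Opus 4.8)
The plan is to show that the semigroup homomorphism $\pi : F_n \to S(n)$ determined by $\pi(e_A) = \alpha_A$ is surjective and that the congruence it induces is exactly the one generated by the stated relations; equivalently, that the induced map $\bar\pi : F_n/R \to S(n)$ is an isomorphism, where $R$ is the congruence generated by $e_A e_B \equiv e_{A\cup B} e_{A\cap B}$. First I would check that these relations hold in $S(n)$, so that $\bar\pi$ is well defined. Since products in $F_n$ correspond to sums of dimension vectors under $\pi$, this amounts to verifying $\alpha_A + \alpha_B = \alpha_{A\cup B} + \alpha_{A\cap B}$. Working coordinate by coordinate over $i \in N$ and splitting into the four cases $i \notin A\cup B$, $i \in A\setminus B$, $i \in B\setminus A$, $i \in A\cap B$, both sides produce the same pair $(a_i^+,a_i^-)$ in each case (namely $(2,0)$, $(1,1)$, $(1,1)$, $(0,2)$), so the identity holds.

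The core of the argument is a normal form. Both $F_n$ and $S(n)$ are graded (by $\deg e_A = 1$ and by the common value $m = a_i^+ + a_i^-$), and $\pi$ together with each relation is homogeneous, so it suffices to work in a fixed degree $d$. I would represent an element of $F_n$ of degree $d$ as a multiset $\{A_1,\dots,A_d\}$ of subsets of $N$ and rewrite it using the rule: whenever two members $A,B$ are incomparable, replace them by $A\cup B$ and $A\cap B$. This is exactly an instance of the defining relation, hence preserves the class modulo $R$. To see that the procedure terminates I would track the potential $\Phi = \sum_{k} |A_k|^2$: setting $a=|A\setminus B|$, $b=|B\setminus A|$, $c=|A\cap B|$ one computes $|A\cup B|^2 + |A\cap B|^2 - |A|^2 - |B|^2 = 2ab$, which is strictly positive precisely when $A,B$ are incomparable. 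Since $\Phi$ is bounded above by $d\,n^2$, the rewriting must stop, and it can only stop when the multiset is totally ordered by inclusion, i.e. a multichain $A_1 \subseteq \dots \subseteq A_d$. Thus every class modulo $R$ in degree $d$ is represented by a multichain.

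Finally I would show that evaluation on multichains is a bijection onto $S(n)_d$. Given a multichain $A_1\subseteq\dots\subseteq A_d$, its image satisfies $a_i^- = \#\{k : i\in A_k\} =: m_i \in \{0,\dots,d\}$, and since the chain is increasing, $i\in A_k$ holds exactly when $m_i > d-k$; hence $A_k = \{i : m_i > d-k\}$ and the entire chain is recovered from the tuple $(m_i)_i$. Conversely, any $(m_i)_i \in \{0,\dots,d\}^n$ arises from the chain defined by this formula, and these tuples are precisely the elements of $S(n)_d$. So multichains of length $d$ correspond bijectively to $S(n)_d$. Combining the two steps, $\bar\pi$ is surjective because every element of $S(n)_d$ is the image of a multichain, and injective because two classes with equal image reduce to multichains with the same $(m_i)_i$, hence to the same chain and therefore to the same class; so $\bar\pi$ is an isomorphism.

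The main obstacle is the normal-form step: proving termination of the rewriting and identifying the terminal forms. The potential $\Phi$, via the identity $|A\cup B|^2+|A\cap B|^2-|A|^2-|B|^2 = 2|A\setminus B|\,|B\setminus A|$, handles termination cleanly. A convenient feature of this approach is that I never need to prove confluence of the rewriting system: the existence of a single chain representative together with the injectivity of evaluation on chains already forces uniqueness, which is what makes the injectivity of $\bar\pi$ fall out for free.
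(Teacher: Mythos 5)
Your proof is correct, but it takes a genuinely different route from the paper. The paper, after checking the relation $\alpha_A+\alpha_B=\alpha_{A\cup B}+\alpha_{A\cap B}$ exactly as you do, finishes by a counting argument: it identifies the semigroup algebra $\C[F_n/\langle e_Ae_B=e_{A\cup B}e_{A\cap B}\rangle]$ with the homogeneous coordinate ring of the Segre embedding $(\PP^1)^n\hookrightarrow\PP^{2^n-1}$ (citing an external result on box-shaped matrices), so that its degree-$m$ part has dimension $(m+1)^n$, which matches $|S(n)_m|=(m+1)^n$ from the component count; the surjection $F_n/R\twoheadrightarrow S(n)$ is then forced to be an isomorphism degree by degree. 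You instead construct an explicit normal form: every degree-$d$ class modulo $R$ is rewritten to a multichain $A_1\subseteq\dots\subseteq A_d$, with termination controlled by the potential $\Phi=\sum_k|A_k|^2$ and the identity $|A\cup B|^2+|A\cap B|^2-|A|^2-|B|^2=2|A\setminus B|\,|B\setminus A|$, and then you show evaluation on multichains is a bijection onto $S(n)_d$ via the recovery formula $A_k=\{i:m_i>d-k\}$. Your observation that confluence is not needed (uniqueness of the normal form falls out of injectivity of evaluation on chains) is a nice touch. What each approach buys: the paper's argument is shorter and ties the semigroup to the geometry of the Segre variety, but leans on an outside citation and on Theorem \ref{th:numberofcomponent}; yours is elementary and self-contained, proves surjectivity of $F_n\to S(n)$ (which the paper only asserts when calling the $\alpha_A$ generators) along the way, and in effect reproves the combinatorial content of the cited Segre result directly.
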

\begin{proof}
There is a relation $\sum_{A \in 2^N} x_A \alpha_A = \sum_{A \in 2^N} y_A \alpha_A$ with $x_A,y_A \in \N$ if and only if 
$$
\sum_{A \in 2^N} \psi_A^{x_A}, \sum_{A \in 2^N} \psi_A^{y_A} \in \rep_\alpha \Z_2^{*n}
$$
for some $\alpha \in S(n)$. From theorem \ref{th:numberofcomponent} it follows that $|S(n)_m| = (m+1)^n$. It is easy to see that $\alpha_A + \alpha_B =  \alpha_{A\cup B} + \alpha_{A \cap B}$ is indeed true for each $A,B \in 2^N$.
\par Let $\mathcal{A}(n)=\C[F_n/\langle e_A e_B =  e_{A \cup B} e_{A \cap B}\rangle]$. Then $\mathcal{A}(n)$ is the homogeneous coordinate ring of the Segre embedding $\begin{tikzcd}
(\PP^1)^n \arrow[r] & \PP^{2^n-1}
\end{tikzcd}$ by \cite[Corollary 1.8]{ha2002box}. Consequently,
$$\dim (\mathcal{A}(n))_m = \C[X_1,Y_1;X_2,Y_2; \ldots ; X_n,Y_n]_{(m,m,\ldots,m)}=(m+1)^n.
$$
This implies that $F_n/\langle e_A e_B =  e_{A \cup B} e_{A \cap B}\rangle = S(n)$, as claimed.
\end{proof}
\par We will also denote the corresponding vertex in the one quiver $Q'_n$ by $\psi_A$. The group $B_n$ acts on these $2^n$ generators in the following way: $S_n$ acts on the elements of $2^N$ by permuting the elements of each set, and $\varphi\cdot A = A \Delta \{e_1\}$.
\par  In order to calculate the one quiver, it is enough to calculate all the outgoing arrows from one vertex and use the $B_n$-symmetry to find the rest. Let us take the trivial character $\psi_\emptyset$. It is clear that the dimension vector of $Q_n$ corresponding to $\psi_\emptyset$ is $(1,0)_{i=1}^n$.
\begin{theorem}
The number of arrows going from $\psi_\emptyset$ to $\psi_A$, $A \in 2^N$ is equal to $|A|-1$.
\end{theorem}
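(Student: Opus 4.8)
The plan is to identify the number of arrows from $\psi_\emptyset$ to $\psi_A$ in the one quiver $Q'_n$ with the dimension of an extension group, and then to compute that dimension by group cohomology. By the one quiver machinery of \cite{bruyn2010trees}, the number of arrows from the vertex $\psi_\emptyset$ to the vertex $\psi_A$ equals $\dim_\C \Ext^1_{\C(\Z_2^{*n})}(\psi_\emptyset,\psi_A)$, where $\psi_\emptyset$ and $\psi_A$ are regarded as the corresponding one-dimensional (hence simple) $\C(\Z_2^{*n})$-modules. Since $\psi_\emptyset$ is the trivial character with $\psi_\emptyset^{-1}=\psi_\emptyset$, tensoring gives a natural isomorphism $\Ext^1_{\C(\Z_2^{*n})}(\psi_\emptyset,\psi_A)\cong H^1(\Z_2^{*n},\psi_A)$, where on the right $\psi_A$ denotes the one-dimensional $\Z_2^{*n}$-module on which $e_i$ acts by $-1$ exactly when $i\in A$. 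Everything thus reduces to computing $\dim_\C H^1(\Z_2^{*n},\psi_A)$.

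To compute this cohomology I would exploit the free product structure. Writing $G=\Z_2^{*n}$ as the fundamental group of a graph of groups whose underlying graph is a tree with $n$ vertices (vertex groups $G_i=\langle e_i\rangle\cong \Z_2$), $n-1$ edges, and all edge groups trivial, the associated Mayer--Vietoris long exact sequence in cohomology with coefficients in $M=\psi_A$ reads
$$
\cdots \to H^k(G,M) \to \bigoplus_{i=1}^n H^k(G_i,M) \to M^{\oplus(n-1)} \to H^{k+1}(G,M) \to \cdots,
$$
the middle terms being the contributions of the trivial edge groups. (Equivalently, one iterates the free-product sequence for $A*B$, each of the $n-1$ successive factorisations contributing one copy of $M$.) Because $\Z_2$ is finite and we work over $\C$, the group algebra $\C\Z_2$ is semisimple, so $H^k(G_i,M)=0$ for all $k\geq 1$. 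The long exact sequence therefore collapses, for $k=0,1$, to the four-term exact sequence
$$
0 \to H^0(G,M) \to \bigoplus_{i=1}^n H^0(G_i,M) \to M^{\oplus(n-1)} \to H^1(G,M) \to 0.
$$

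It remains to read off dimensions. Each $H^0(G_i,M)=M^{G_i}$ is $\C$ when $e_i$ acts trivially (i.e.\ $i\notin A$) and $0$ otherwise, so $\dim_\C\bigoplus_i H^0(G_i,M)=n-|A|$; the middle term has dimension $n-1$; and $H^0(G,M)=M^G$ is $\C$ if $A=\emptyset$ and $0$ if $A\neq\emptyset$. Taking the alternating sum of dimensions across the exact sequence (its Euler characteristic is zero) gives
$$
\dim_\C H^1(G,M) = \dim_\C H^0(G,M) - (n-|A|) + (n-1) = \dim_\C H^0(G,M) + |A| - 1.
$$
For $A\neq\emptyset$ the first term vanishes and we obtain exactly $|A|-1$ arrows, as claimed; the degenerate case $A=\emptyset$ yields $\dim H^1=0$, consistent with the absence of loops at $\psi_\emptyset$.

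The main obstacle, and the step deserving the most care, is the cohomological input: justifying the Mayer--Vietoris sequence for the free product (equivalently, assembling the relevant partial free resolution of the trivial module, e.g.\ from the presentation $\langle e_1,\ldots,e_n\mid e_i^2\rangle$ via Fox derivatives) and correctly identifying the middle term as $M^{\oplus(n-1)}$. Once the sequence is in place, the vanishing of positive $\Z_2$-cohomology in characteristic zero and the bookkeeping of invariants make the dimension count routine.
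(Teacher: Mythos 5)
Your proof is correct, and it takes a genuinely different route from the paper's. Both arguments rest on the same identification --- the number of arrows from $\psi_\emptyset$ to $\psi_A$ in the one quiver $Q'_n$ is $\dim_\C \Ext^1_{\C(\Z_2^{*n})}(\psi_\emptyset,\psi_A)$ --- but they compute this Ext space in different worlds. The paper stays inside the quiver formalism it has already set up: since the relevant $\Z_2^{*n}$-representations sit as (orbits of) open subsets of $\rep_\alpha Q_n$ with matching Hom and Ext spaces, it invokes $\dim\Hom(V,W)-\dim\Ext^1(V,W)=\chi_{Q_n}(V,W)$ and reads the answer off a single row-vector-times-Euler-matrix computation with $\mathcal{M}_n$; the $B_n$-symmetry then propagates the count to all pairs $\psi_A,\psi_B$. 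You instead compute the Ext group intrinsically as group cohomology, $\Ext^1_{\C(\Z_2^{*n})}(\psi_\emptyset,\psi_A)\cong H^1(\Z_2^{*n},\psi_A)$ (immediate since $\psi_\emptyset$ is the trivial module), and evaluate it via the Mayer--Vietoris sequence of the Bass--Serre graph-of-groups decomposition of the free product with trivial edge groups, plus Maschke vanishing $H^{\geq 1}(\Z_2,-)=0$ in characteristic zero; your dimension count in the resulting four-term exact sequence is accurate, and the case $A=\emptyset$ correctly returns $0$, matching the absence of loops. What the paper's route buys is brevity within its own machinery, at the cost of silently leaning on the transfer of Hom/Ext between $\C(\Z_2^{*n})$-modules and $Q_n$-representations from \cite{bruyn2010trees}; what your route buys is independence from the embedding into $\rep_\alpha Q_n$ altogether --- it works verbatim for free products of arbitrary finite groups and arbitrary pairs of characters (twist coefficients by $\psi_B^{-1}$ to handle $\Ext^1(\psi_B,\psi_A)$, recovering the $|A\Delta B|-1$ formula without appealing to the $B_n$-action). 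The one ingredient you flag as needing care, the Mayer--Vietoris sequence for free products, is standard (Serre's theory of trees, or Brown's book on group cohomology), so there is no gap.
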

\begin{proof}The matrix determining the Euler form $\chi_{Q_n}(-,-)$ of $Q_n$ is equal to
$$
\mathcal{M}_n=\begin{bmatrix}
1      & 0      & -1     & -1     & \ldots & -1 \\
0      & 1      & -1     & -1     & \ldots & -1 \\
0      & 0      &  1     &  0     & \ldots & 0 \\
0      & 0      &  0     &  1     &        & 0 \\
\vdots & \vdots &        &        & \ddots & \vdots\\
0      & 0      & \ldots &        &        & 1
\end{bmatrix}.
$$
Calculating $(1,0;1,0;\ldots;1,0)M$, we find $(1,0;0,-1;0,-1;\ldots;0,-1)$.
\par Using the formula $\dim \Hom_{\Z^{*n}}(V,W) - \dim \Ext^1_{\Z^{*n}}(V,W) = \chi_{Q_n}(V,W)$ for representations $V$ and $W$ of $Q_n$, we find for any $A \in 2^N$
\begin{align*}
\dim \Ext^1_{\Z^{*n}}(\psi_\emptyset,\psi_\emptyset)&= 0,\\
\dim \Ext^1_{\Z^{*n}}(\psi_\emptyset,\psi_A)&= [A] - 1.
\end{align*}
\end{proof}
\par Using the $B_n$-symmetry, we find that for two set $A,B \in 2^N$ with $A \neq B$
\begin{align*}
\dim \Ext^1_{\Z^{*n}}(\psi_A,\psi_A)&= 0,\\
\dim \Ext^1_{\Z^{*n}}(\psi_A,\psi_B)&=|\{e_i | \psi_A(e_i) \neq \psi_B(e_i)\}| -1 = |A \Delta B|-1.
\end{align*} 

\begin{corollary}
Let $2^N$ be the set of vertices of the unit hypercube in $\R^n$. Then the one quiver $Q'_n$ for $\Z_2^{*n}$ has as vertices the elements of $2^N$, no loops and $k-1$ arrows between two sets $A\neq B \in 2^N$ if and only if $k = d_E(s,t)^2= |A \Delta B|$, with $d_E$ the Euclidean distance in $\R^n$.
\label{cor:onequiver}
\end{corollary}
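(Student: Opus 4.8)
The corollary is essentially a reformulation, in the language of the one quiver $Q'_n$, of the $\Ext^1$-computation carried out just above, so my plan is mostly bookkeeping together with one elementary metric identity. First I would recall from the one-quiver formalism of \cite{bruyn2010trees} that the vertices of $Q'_n$ are the one-dimensional representations indexed by $2^N$ and that the number of arrows from $\psi_A$ to $\psi_B$ equals $\dim \Ext^1_{\Z_2^{*n}}(\psi_A,\psi_B)$. The absence of loops then follows immediately from the already-established vanishing $\dim \Ext^1_{\Z_2^{*n}}(\psi_A,\psi_A)=0$, and for distinct $A\neq B$ the count $\dim \Ext^1_{\Z_2^{*n}}(\psi_A,\psi_B)=|A\Delta B|-1$---obtained by propagating the base vertex $\psi_\emptyset$ through the $B_n$-symmetry---yields exactly $k-1$ arrows once one sets $k=|A\Delta B|$. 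Because $|A\Delta B|$ is symmetric in its arguments, the two directions carry equally many arrows, matching the symmetric-quiver convention fixed in the notation.

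Next I would supply the geometric interpretation. The idea is to embed each $A\in 2^N$ as the hypercube vertex $v_A\in\{0,1\}^n\subset\R^n$ whose $i$-th coordinate is $1$ exactly when $i\in A$. A coordinate-wise check shows that $(v_A^{(i)}-v_B^{(i)})^2$ equals $1$ precisely when $i$ lies in exactly one of $A$ and $B$, i.e. when $i\in A\Delta B$, and equals $0$ otherwise; summing over $i$ then gives $d_E(v_A,v_B)^2=|A\Delta B|=k$. This identifies the combinatorial count $|A\Delta B|$ with the squared Euclidean distance and closes the statement.

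I do not expect a genuine obstacle here, since all the real content already resides in the preceding theorem and in the $B_n$-symmetry used to spread the base computation to every pair of vertices. The only two points I would be careful about are making the arrow-versus-$\Ext^1$ dictionary of the one quiver fully explicit, and checking that it is the squared Euclidean distance---rather than the distance itself---that reproduces $|A\Delta B|$.
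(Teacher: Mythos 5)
Your proposal is correct and matches the paper's treatment: the corollary is stated there without a separate proof precisely because it is, as you say, a reformulation of the preceding $\Ext^1$-computation ($\dim \Ext^1(\psi_A,\psi_A)=0$ and $\dim \Ext^1(\psi_A,\psi_B)=|A\Delta B|-1$ via the $B_n$-symmetry), combined with the elementary identity $d_E(v_A,v_B)^2=|A\Delta B|$ for hypercube vertices. Your explicit coordinate-wise verification of that identity and of the arrow-versus-$\Ext^1$ dictionary simply fills in the details the paper leaves implicit.
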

It is easy to find an inductive procedure to determine the Euler form of $Q'_n$.
\begin{proposition}
If $M_{n-1}$ is the matrix determining the Euler form of the one quiver $Q'_{n-1}$ for $\Z_2^{*(n-1)}$ by the lexicographical order (with $\emptyset < N$), then the Euler form of the one quiver $Q'_n$ for $\Z_2^{*n}$ is determined by the matrix
$$
M_n=
\begin{bmatrix}
M_{n-1} & M_{n-1} - P_{n-1} \\ M-P_{n-1} & M_{n-1}
\end{bmatrix},
$$
with $P_{n-1}$ is the $(n-1) \times (n-1)$-matrix with every entry equal to $1$.
\end{proposition}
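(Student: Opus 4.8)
The plan is to reduce the whole statement to a single closed formula for the entries of $M_n$ and then read off the block decomposition by an elementary computation with symmetric differences. First I would record that the vertices $\psi_A$, $A\in 2^N$, of the one quiver $Q'_n$ are pairwise non-isomorphic simple representations, so that $\dim\Hom(\psi_A,\psi_B)=\delta_{A,B}$, and hence the matrix of the Euler form has entries
$$
(M_n)_{A,B}=\chi_{Q'_n}(\psi_A,\psi_B)=\delta_{A,B}-\dim\Ext^1_{\Z_2^{*n}}(\psi_A,\psi_B).
$$
Substituting the $\Ext$-computation obtained just before Corollary \ref{cor:onequiver}, namely $\dim\Ext^1(\psi_A,\psi_B)=|A\Delta B|-1$ for $A\neq B$ and $0$ for $A=B$, both cases merge into the uniform expression
$$
(M_n)_{A,B}=1-|A\Delta B|,\qquad A,B\in 2^N .
$$
This is the only input needed from the earlier part of the paper; the remainder is bookkeeping with this one formula.

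Next I would make the lexicographic order explicit via characteristic vectors, identifying $A\subseteq N$ with $(x_1,\dots,x_n)\in\{0,1\}^n$, where $x_i=1\Leftrightarrow i\in A$, ordered lexicographically with $0<1$ so that $\emptyset=(0,\dots,0)$ is smallest and $N=(1,\dots,1)$ is largest. Splitting rows and columns according to the leading coordinate $x_1$ produces two consecutive blocks of size $2^{n-1}$: the subsets with $1\notin A$, which are exactly the subsets of $\{2,\dots,n\}$ in the induced lexicographic order, and the subsets with $1\in A$, which are $A'\cup\{1\}$ for $A'\subseteq\{2,\dots,n\}$ in the same order. Since the $S_n$-action lets the ground set $\{2,\dots,n\}$ be relabelled monotonically to $\{1,\dots,n-1\}$, the restriction of $1-|A\Delta B|$ to subsets of $\{2,\dots,n\}$ is precisely $M_{n-1}$.

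The core of the argument is then the following elementary fact about the symmetric difference, applied with the distinguished element $e=1$: if $e\notin A'$ and $e\notin B'$ then $(A'\cup\{e\})\Delta(B'\cup\{e\})=A'\Delta B'$, whereas $A'\Delta(B'\cup\{e\})=(A'\Delta B')\sqcup\{e\}$, so that the cardinality increases by exactly one. Running this through the four blocks: the top-left and bottom-right blocks both give $(M_n)_{A',B'}=1-|A'\Delta B'|=(M_{n-1})_{A',B'}$, while in the two off-diagonal blocks one index carries the element $1$ and the other does not, so $|A\Delta B|$ rises by one and the entry becomes $-|A'\Delta B'|=(M_{n-1})_{A',B'}-1=(M_{n-1}-P_{n-1})_{A',B'}$, where $P_{n-1}$ has all entries equal to $1$. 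Since $|A\Delta B|=|B\Delta A|$, both off-diagonal blocks coincide and equal $M_{n-1}-P_{n-1}$; assembling the four pieces yields the claimed block form of $M_n$.

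I do not expect a genuine obstacle here: the mathematical content is the single identity $(M_n)_{A,B}=1-|A\Delta B|$ together with the two symmetric-difference facts above. The only point that demands care is the indexing bookkeeping — one must verify that the natural bijection $A'\leftrightarrow A'\cup\{1\}$ between the two halves is order-preserving, so that the off-diagonal blocks are genuinely indexed by the \emph{same} ordered set as $M_{n-1}$, and that the induced lexicographic order on subsets of $\{2,\dots,n\}$ agrees, after the monotone relabelling $i\mapsto i-1$, with the order defining $M_{n-1}$. Once this alignment is checked, the block identities follow verbatim from the computation above.
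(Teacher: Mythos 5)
Your proof is correct and follows essentially the same route as the paper's: both reduce the claim to the identity $(M_n)_{A,B}=1-|A\Delta B|$ (equivalently, $\dim\Ext^1(\psi_A,\psi_B)=|A\Delta B|-1$) together with the observation that adjoining a distinguished element to exactly one of $A,B$ increases $|A\Delta B|$ by one, while adjoining it to both (or neither) leaves it unchanged. The only cosmetic difference is that you split $2^N$ by membership of the element $1$, whereas the paper splits by membership of $n$; under either reading of the lexicographic order this produces the same block decomposition, so the arguments coincide in substance.
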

\begin{proof}
Any $A \in 2^N$ is either a subset of $\{1,\ldots,n-1\}$ or $n \in A$. Let $A$ and $B$ be 2 subsets of $N$. There are two cases to consider:
\begin{itemize}
\item $A,B \subset \{1,\ldots,n-1\}$ or $n \in A\cap B$: then $A \Delta B \subset \{1,\ldots,n-1\}$, so
$$\dim \Ext^1_{\Z_2^{*n}}(\psi_A,\psi_B) = \dim\Ext^1_{\Z_2^{*(n-1)}}(\psi_A|_{\Z_2^{*(n-1)}},\psi_B|_{\Z_2^{*(n-1)}}).$$
\item $A \subset \{1,\ldots,n-1\}$ and $B \not\subset \{1,\ldots,n-1\}$: in this case we have $A \Delta B = (A \Delta (B\cap\{1,\ldots,n-1\})) \cup \{n\}$, so we have
$$
\dim \Ext^1_{\Z_2^{*n}}(\psi_A,\psi_B) = \dim\Ext^1_{\Z_2^{*(n-1)}}(\psi_A|_{\Z_2^{*(n-1)}},\psi_B|_{\Z_2^{*(n-1)}})+1.
$$
\end{itemize}
The claim follows.
\end{proof}
\begin{corollary}
If $M_n$ is the Euler matrix of $Q'_n$, then
$$
(M_n)_{A,B} = 1-|A \Delta B|.
$$
\end{corollary}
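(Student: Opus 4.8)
The plan is to prove the closed formula by induction on $n$, using the block recursion for $M_n$ from the preceding proposition. Note first that the single formula $(M_n)_{A,B} = 1 - |A \Delta B|$ is meant to cover the diagonal as well: for $A = B$ one has $A \Delta A = \emptyset$, so the formula correctly returns the value $1$. For the base case $n = 1$ we have $2^N = \{\emptyset, \{1\}\}$ and $Q'_1$ has no arrows, so $M_1$ is the $2 \times 2$ identity matrix, which agrees with $1 - |A\Delta B|$ (equal to $1$ on the diagonal and $1 - 1 = 0$ off it).

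For the inductive step I would order the $2^n$ vertices so that the first $2^{n-1}$ are the subsets $A \subseteq \{1, \ldots, n-1\}$ and the last $2^{n-1}$ are the subsets $A' \cup \{n\}$ with $A' \subseteq \{1,\ldots,n-1\}$, and then check the target formula against each of the three distinct blocks of $M_n = \begin{bmatrix} M_{n-1} & M_{n-1} - P_{n-1} \\ M_{n-1} - P_{n-1} & M_{n-1}\end{bmatrix}$. Everything is controlled by how $A \Delta B$ interacts with the new element $n$. When neither or both of $A, B$ contain $n$ (the two diagonal blocks), the element $n$ cancels out of the symmetric difference, so $|A \Delta B|$ equals the value computed inside $\{1,\ldots,n-1\}$, and the induction hypothesis applied to the $M_{n-1}$ block closes these cases at once. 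When exactly one of $A, B$ contains $n$ (the off-diagonal blocks), $n$ lies in $A \Delta B$, so $|A \Delta B|$ is larger by exactly $1$ than the corresponding value in $\{1,\ldots,n-1\}$, and this is compensated precisely by the subtraction of the all-ones matrix: $(M_{n-1} - P_{n-1})_{A',B'} = (1 - |A'\Delta B'|) - 1 = 1 - (|A'\Delta B'| + 1) = 1 - |A \Delta B|$.

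There is no real obstacle beyond the index bookkeeping; the arithmetic content is simply matching the $-1$ supplied by $P_{n-1}$ with the $+1$ that $n$ contributes to $|A\Delta B|$. As a cleaner alternative that avoids the recursion entirely, one can argue directly: the characters $\psi_A$ are pairwise non-isomorphic one-dimensional, hence simple, representations, so $\dim \Hom_{\Z_2^{*n}}(\psi_A, \psi_B) = \delta_{A,B}$, and the Euler form gives $(M_n)_{A,B} = \delta_{A,B} - \dim \Ext^1_{\Z_2^{*n}}(\psi_A, \psi_B)$. Substituting the previously computed values $\dim \Ext^1(\psi_A,\psi_B) = |A \Delta B| - 1$ for $A \neq B$ and $0$ for $A = B$ collapses both cases into the single expression $1 - |A\Delta B|$.
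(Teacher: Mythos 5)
Your proposal is correct and takes essentially the paper's route: the corollary is stated there without proof as an immediate consequence of the preceding block-recursion proposition, and your induction --- base case $M_1 = I_2$, with the $-1$ supplied by $P_{n-1}$ absorbing the $+1$ that the element $n$ contributes to $|A \Delta B|$ --- is precisely the argument the paper leaves to the reader. Your alternative direct argument, $(M_n)_{A,B} = \dim \Hom_{\Z_2^{*n}}(\psi_A,\psi_B) - \dim \Ext^1_{\Z_2^{*n}}(\psi_A,\psi_B) = \delta_{A,B} - \dim \Ext^1_{\Z_2^{*n}}(\psi_A,\psi_B)$, combined with the Ext-dimensions the paper computes just before this corollary, is equally valid and in fact shorter.
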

\begin{center}
\begin{tikzpicture}[scale=1]
   \node[vertice,circle] (a) at ( -5, -3.5) {$a_1^+$};
   \node[vertice,circle] (b) at ( -5, -8)   {$a_1^-$};
   \node[vertice,circle] (c) at (  0, -1)   {$a_2^+$};
   \node[vertice,circle] (d) at (  0, -4)   {$a_2^-$};
   \node[vertice,circle] (e) at (  0, -7)   {$a_3^+$};
   \node[vertice,circle] (f) at (  0, -10)  {$a_3^-$};
%   \tikzset{every node/.style={fill=white}} 
   \path[->,font=\scriptsize,>=angle 90]
    (a) edge node[auto]{$B^2_{++}$} (c)
    (a) edge node[auto]{$B^2_{-+}$} (d)
    (a) edge node[auto]{$B^3_{++}$} (e)
    (a) edge node[auto]{$B^3_{-+}$} (f)
    (b) edge node[auto]{$B^2_{+-}$} (c)
    (b) edge node[auto]{$B^2_{--}$} (d)
    (b) edge node[auto]{$B^3_{+-}$} (e)
    (b) edge node[auto]{$B^3_{--}$} (f)    ;
\end{tikzpicture}\\
\end{center}
\begin{example}
A representation of the group $\Z_2^{*3}=\Z_2 * \Z_2 * \Z_2$ corresponds to a representation of the quiver $Q_3$ for a dimension vector $\alpha = (a_i^+,a_i^-)_{i=1}^3$  satisfying $a_1^++a_1^- = a_2^+ + a_2^- = a_3^+ + a_3^-$, on the condition that the matrices 
$$
B(2)=\begin{bmatrix}
B^2_{++} & B^2_{+-} \\ B^2_{-+} & B^2_{--}
\end{bmatrix} \text{ and }B(3)=\begin{bmatrix}
B^3_{++} & B^3_{+-} \\ B^3_{-+} & B^3_{--}
\end{bmatrix}$$
are invertible. In particular, the useful representations of this quiver form an open subset of $\rep_\alpha Q_3$. A representation of $Q_3$ fulfilling the invertibility condition then leads to the following three matrices that determine a representation of $\Z_2^{*3}$
\begin{align*}
\left(\begin{bmatrix}
1_{a_1^+} & 0 \\ 0 & -1_{a_1^-}
\end{bmatrix},
B(2)^{-1} \begin{bmatrix}
1_{a_2^+} & 0 \\ 0 & -1_{a_2^-}
\end{bmatrix}B(2), B(3)^{-1}\begin{bmatrix}
1_{a_3^+} & 0 \\ 0 & -1_{a_3^-}
\end{bmatrix}B(3)\right).
\end{align*}
\par In this case, the one quiver $Q'_3$ has as vertices the vertices of the unit cube in $\R^3$ and as arrows one arrow in each direction for each side diagonal and two arrows in each direction for each space diagonal. The corresponding one quiver is
\begin{center}
\begin{tikzpicture}[
    implies/.style={double,double equal sign distance,-implies},
    dot/.style={shape=circle,fill=black,minimum size=2pt,
                inner sep=0pt,outer sep=2pt},
scale=3]
   \node[vertice,circle] (a) at ( 0, 0) {};
   \node[vertice,circle] (b) at ( 2, 0) {};
   \node[vertice,circle] (c) at ( 2, 2) {};   
   \node[vertice,circle] (d) at ( 0, 2) {};  
   \node[vertice,circle] (e) at ( 1, 1/2) {};
   \node[vertice,circle] (f) at ( 3, 1/2) {};
   \node[vertice,circle] (g) at ( 3, 5/2) {};   
   \node[vertice,circle] (h) at ( 1, 5/2) {}; 
   \draw[<->,font=\scriptsize]
   (a) edge (c)
   (b) edge (d)
   (a) edge (f)
   (a) edge (h)
   (d) edge (e)
   (b) edge (e)
   (b) edge (g)
   (c) edge (f)
   (f) edge (h)
   (e) edge (g)
   (d) edge (g)
   (c) edge (h)
   (a) edge[implies] (g)
   (b) edge[implies] (h)
   (c) edge[implies] (e)
   (d) edge[implies] (f);
   \draw[dotted,-,font=\scriptsize]
   (a) edge (e)
   (a) edge (b)
   (a) edge (d)
   (d) edge (c)
   (d) edge (h)
   (c) edge (b)
   (c) edge (g)
   (b) edge (f)
   (e) edge (f)
   (e) edge (h)
   (h) edge (g)
   (g) edge (f); 
\end{tikzpicture}
\end{center}
The corresponding Euler matrix of $Q'_3$ is
$$
M_3=
\begin{bmatrix}
1 & 0 & 0 & -1 & 0 & -1 & -1 & -2 \\
0 & 1 & -1 & 0 & -1 & 0 &-2 & -1 \\
0 & -1 & 1 & 0 & -1 &-2 & 0 & -1 \\
-1 & 0 & 0 & 1 & -2 &-1 & -1 & 0 \\
0 & -1 & -1 & -2 & 1 & 0 & 0 & -1 \\
-1 & 0 &-2 & -1 & 0 & 1 & -1 & 0  \\
-1 &-2 & 0 & -1 & 0 & -1 & 1 & 0 \\
-2 &-1 & -1 & 0 & -1 & 0 & 0 & 1
\end{bmatrix}.
$$
\end{example}

\subsection{Components with simple representations}
We assume that $n\geq 3$ and that $m \geq 2$.
\par We have seen in theorem \ref{th:numberofcomponent} that the variety $\rep_m \Z_2^{*n}$ is a disjoint union of $(m+1)^n$ affine varieties $\sqcup_{|\alpha|=nm} \rep_\alpha \Z_2^{*n}$.
\par This subsection will classify the dimension vectors for which $\rep_\alpha \Z_2^{*n}$ contains simple representations. Fix a dimension vector $\alpha = (a_i^+,a_i^-)_{i=1}^n$ with $m = a_1^++a_1^- = a_2^++a_2^- = \ldots = a_n^++a_n^-$ for $Q_n$. Let $M$ be a semisimple representation belonging to $\rep_\alpha \Z_2^{*n}$, such that $M$ decomposes as a direct sum of one-dimensional representations
$$
M = \oplus_{A \in 2^N} \psi_A^{b_A}.
$$
We know that there is an \'etale morphism such that the image contains an open subset (see amongst others \cite[Theorem 3.14]{LBBook})
$$
\begin{tikzcd}
\GL_\alpha(\C) \times^{\Stab(M)} \rep_\beta Q'_n \arrow{r} & \rep_\alpha \Z_2^{*n}
\end{tikzcd}
$$
where $Q'_n$ is the one quiver we calculated in the previous section and $\beta=(b_A)_{A \in 2^N}$ is a dimension vector for $Q'_n$. It is clear that
$$
\Stab(M) = \prod_{A \in 2^N} \GL_{b_A}(\C).
$$
In order for $\rep_\alpha \Z_2^{*n}$ to contain simple representations, we need to find dimension vectors $\beta$ for $Q'_n$ such that $\rep_\beta Q'_n$ contains simple representations of $Q'_n$. Recall the following result:
\begin{theorem}\cite[Theorem 4.10]{bruyn2003one}
Let $Q$ be a quiver with associated Euler form $\chi_Q$. Let $\alpha = (d_1, \ldots,d_k)$ be a dimension vector for $Q$. Then $\rep_\alpha Q$ contains simple representations if and only if one of the following conditions is satisfied
\begin{itemize}
\item $\Supp(\alpha)$ is an extended Dynkin quiver of type $\tilde{A}_k$ on $k$ vertices with cyclic orientation and $\alpha= (1,\ldots,1)$,
\item $\Supp(\alpha)$ is not of type $\tilde{A}_k$. Then $\Supp(\alpha)$ is stronly connected, $\chi_Q(\alpha,\epsilon_i) \leq 0$ and $\chi_Q(\epsilon_i,\alpha) \leq 0$ for all $1\leq i \leq k$.
\end{itemize}
\label{th:simpledim}
\end{theorem}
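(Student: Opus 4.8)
The statement is the Le Bruyn--Procesi characterization of the dimension vectors carrying simple representations, so I would organize a proof into a necessity half, the cyclic special case, and a sufficiency half, the last being where the real content sits.

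\emph{Necessity.} Suppose $M \in \rep_\alpha Q$ is simple and $\alpha$ is not a single vertex class $\epsilon_i$. First I would establish strong connectivity of $\Supp(\alpha)$: if it failed, the condensation of the support into strongly connected components would contain a terminal component $W \subsetneq \Supp(\alpha)$ with no arrows leaving $W$, and then $\bigoplus_{i \in W} M_i$ is a proper nonzero subrepresentation, contradicting simplicity. Next the Euler inequalities: for each vertex $i$ the vertex simple $S_i = \epsilon_i$ is non-isomorphic to $M$, so Schur's lemma gives $\Hom(M,S_i)=0$ and $\Hom(S_i,M)=0$ (a nonzero map has either a proper kernel or a proper image in $M$). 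Since the path algebra is hereditary, the same Euler identity $\dim\Hom - \dim\Ext^1 = \chi_Q$ used earlier in the excerpt forces $\chi_Q(\alpha,\epsilon_i) = -\dim\Ext^1(M,S_i) \leq 0$, and symmetrically $\chi_Q(\epsilon_i,\alpha) \leq 0$.

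\emph{The cyclic case.} I would treat the oriented cycle $\tilde{A}_k$ by hand, since this is exactly the configuration where the inequalities hold for many dimension vectors yet fail to be sufficient. For $\alpha = (1,\ldots,1)$ the representation with every arrow an isomorphism and nonzero monodromy around the cycle is visibly simple. Conversely, for a sincere $\alpha = (d,\ldots,d)$ with $d \geq 2$ (the only other sincere vectors with $\chi_Q(\alpha,\epsilon_i)=0$ at every $i$), the composite of the arrow maps around the cycle has an eigenvector whose cyclic orbit spans a $(1,\ldots,1)$-subrepresentation; hence no such $\alpha$ carries a simple. This both establishes the first bullet and explains why $\tilde{A}_k$ must be carved out of the second.

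\emph{Sufficiency, and the main obstacle.} For strongly connected, non-cyclic $\Supp(\alpha)$ satisfying both inequalities, I would prove existence of a simple by induction on $|\alpha| = \sum_i \alpha_i$, building it up from a smaller simple by generic extensions. The engine is the lemma: if $M'$ is simple of dimension $\beta$ with $\Ext^1(M',S_i) \neq 0$ and $\Ext^1(S_i,M') \neq 0$, then a generic representation of dimension $\beta + \epsilon_i$ is simple, realized so that $S_i$ splits off neither as a submodule nor as a quotient, with the non-simple locus a proper closed subset. Via the Euler identity and $\Hom(M',S_i)=\Hom(S_i,M')=0$, the two inequalities translate exactly into the nonvanishing of these Ext groups. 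The step I expect to be genuinely delicate is the descent: choosing a vertex $i$ so that $\alpha - \epsilon_i$ remains strongly connected, non-cyclic, and still satisfies $\chi_Q(\alpha - \epsilon_i, \epsilon_j) \leq 0$ and $\chi_Q(\epsilon_j, \alpha - \epsilon_i) \leq 0$ at every $j$, so that the inductive hypothesis applies. This is precisely where non-cyclicity is used: on a pure cycle all the inequalities are equalities and no strictly reducible vertex exists, so the descent, and with it the theorem, breaks, matching the exclusion in the cyclic case. Verifying the combinatorics of this reduction, together with the genericity argument that the built-up extension is actually simple, is where the bulk of the work lies.
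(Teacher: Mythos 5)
First, a point of comparison: the paper does \emph{not} prove this statement at all; it is imported as a black box from \cite[Theorem 4.10]{bruyn2003one} (going back to Le Bruyn--Procesi), so your proposal has to be judged on its own merits rather than against an internal argument. Your necessity half and your hands-on treatment of the cyclic $\tilde{A}_k$ case are correct and standard, and your ``engine lemma'' is also fine --- it is exactly the local-quiver argument the paper uses elsewhere: the Ext-quiver of $M' \oplus S_i$ has two vertices with arrows in both directions, $(1,1)$ is a simple dimension vector for it, and the \'etale local structure produces a simple of dimension $\beta + \epsilon_i$.

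The genuine gap is the descent step of your sufficiency induction, and it is not merely ``delicate'': it fails. Take $Q$ with vertices $1,2,3$ and arrows $1\to 2$ (doubled), $2\to 3$, $3\to 1$; this is strongly connected and not of type $\tilde{A}_2$. For $\alpha=(2,2,2)$ one computes $\bigl(\chi_Q(\alpha,\epsilon_i)\bigr)_i=(0,-2,0)$ and $\bigl(\chi_Q(\epsilon_i,\alpha)\bigr)_i=(-2,0,0)$, so $\alpha$ is a simple dimension vector. But $\chi_Q(\epsilon_3,\alpha-\epsilon_1)=1>0$, $\chi_Q(\alpha-\epsilon_2,\epsilon_3)=1>0$ and $\chi_Q(\alpha-\epsilon_3,\epsilon_1)=1>0$, so \emph{no} vertex admits a valid descent; worse, none of the three vectors $\alpha-\epsilon_i$ is a simple dimension vector, so there is no simple $M'$ to extend and the induction cannot reach $\alpha$ at all. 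The same quiver already blocks the base case: $(1,1,1)$ is a simple dimension vector, yet each of $(0,1,1),(1,0,1),(1,1,0)$ has non-strongly-connected support. This also refutes your parenthetical claim that non-cyclicity is precisely what makes the descent go through. The repair is twofold: handle sincere all-ones vectors directly (on a strongly connected support, a representation with all arrow maps nonzero generic scalars is simple, since a subrepresentation is all-or-nothing at each vertex and strong connectivity propagates ``all''), and in the inductive step allow decompositions $\alpha=\beta_1+\cdots+\beta_r$ into \emph{arbitrary} smaller simple dimension vectors whose Ext-quiver is strongly connected with all-ones dimension vector --- e.g.\ $(2,2,2)=(1,1,1)+(1,1,1)$ in the example --- rather than only peeling off one vertex simple. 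With that generalized engine the induction closes, and this is essentially how the cited proof runs.
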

Recall that a strongly connected quiver is a quiver $Q$ such that for each pair of vertices $v_i$, $v_j$ of $Q$, there exists an oriented cycle in $Q$ passing through $v_i$ and $v_j$.
\par As all full subquivers of $Q'_n$ are symmetric, the first case can only be if $\Supp(\beta)$ is the full subquiver on $\psi_A$ and $\psi_B$ with $A \Delta B = 2$ and $b_A = b_B = 1$. If this is not the case, then we need to check that $\Supp(\beta)$ is strongly connected and that $\chi_{Q'_n}(\beta,\psi_A) \leq 0$ for each $\psi_A\in \Supp(\beta)$, as $Q'_n$ is symmetric. As $Q'_n$ is symmetric, a full subquiver is strongly connected if and only if it is connected.
\begin{lemma}
Let $Q^1$ and $Q^2$ be two full subquivers of $Q'_n$ such that the number of vertices of $Q^1$ and the number of vertices of $Q^2$ are both separately larger than or equal to 2. If $Q^1 \sqcup Q^2 \subset Q'_n$, then the underlying graph is
\begin{center}
\begin{tikzpicture}[scale=1]
   \node[vertice,circle] (a) at ( 1, 1) {};
   \node[vertice,circle] (b) at ( 1, -1) {};
   \node[vertice,circle] (c) at (  -1, 1) {};
   \node[vertice,circle] (d) at (  -1, -1) {};
%   \tikzset{every node/.style={fill=white}} 
   \path[->,font=\scriptsize,>=angle 90]
    (a) edge[<->] (d)
    (b) edge[<->] (c)
    ;
\end{tikzpicture}
\end{center}
\label{lemma:disconnected}
\end{lemma}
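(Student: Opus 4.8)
The plan is to reduce the statement to a purely combinatorial fact about the Boolean lattice $2^N$, using Corollary~\ref{cor:onequiver}. By that corollary there are no arrows between two distinct vertices $A,B$ of $Q'_n$ precisely when $|A \Delta B| = 1$. Since $Q^1$ and $Q^2$ are disjoint, the hypothesis $Q^1 \sqcup Q^2 \subset Q'_n$ (no arrows joining the two parts) is therefore equivalent to the condition
$$
|A \Delta B| = 1 \quad \text{for all } A \in Q^1,\ B \in Q^2;
$$
that is, in the hypercube picture every vertex of $Q^1$ is joined to every vertex of $Q^2$ by an edge of the cube. The whole lemma thus becomes a claim about which pairs of disjoint vertex sets in the cube, each of size $\geq 2$, can be completely cube-adjacent across.

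First I would fix $A_1 \neq A_2$ in $Q^1$ and $B_1 \neq B_2$ in $Q^2$ and pin down their relative position. From $|A_1 \Delta B_1| = |A_1 \Delta B_2| = 1$ I can write $B_1 = A_1 \Delta \{i\}$ and $B_2 = A_1 \Delta \{j\}$, and $B_1 \neq B_2$ forces $i \neq j$, whence $B_1 \Delta B_2 = \{i,j\}$. Feeding in $|A_2 \Delta B_1| = |A_2 \Delta B_2| = 1$ and computing symmetric differences forces $A_1 \Delta A_2 = \{i,j\}$, i.e. $A_2 = A_1 \Delta \{i,j\}$. Hence $A_1, B_1, A_2, B_2$ are exactly the four corners of the $2$-face of the cube spanned by coordinates $i,j$, with $\{A_1,A_2\}$ and $\{B_1,B_2\}$ the two diagonals; in particular $|A_1 \Delta A_2| = |B_1 \Delta B_2| = 2$, so inside $Q'_n$ each of these two pairs is joined by a single arrow back and forth, and there are no other arrows among the four vertices.

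It then remains to exclude a fifth vertex, i.e. to show $|Q^1| = |Q^2| = 2$. Suppose $A_3$ is a further vertex of $Q^1$. Running the previous paragraph on the quadruple $A_1, A_3, B_1, B_2$ produces coordinates $i', j'$ with $A_1 \Delta A_3 = \{i', j'\}$ and $\{B_1, B_2\} = \{A_1 \Delta \{i'\}, A_1 \Delta \{j'\}\}$. Comparing with $B_1 = A_1 \Delta \{i\}$, $B_2 = A_1 \Delta\{j\}$ forces $\{i',j'\} = \{i,j\}$, hence $A_3 = A_1 \Delta \{i,j\} = A_2$, contradicting $A_3 \neq A_2$. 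By the left--right symmetry of the hypothesis the same argument bounds $|Q^2| \leq 2$. Combined with the standing assumption that both have at least two vertices, this gives $|Q^1| = |Q^2| = 2$, and the configuration found above is precisely the underlying graph drawn in the statement.

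I expect the only real work to be the symmetric-difference bookkeeping in the second paragraph that identifies the common $2$-face; the non-extendability in the third paragraph is then an immediate consequence, and the translation in the first paragraph is a direct reading of Corollary~\ref{cor:onequiver}. One point to be careful about: after also writing $B_1 = A_2 \Delta \{k\}$ and $B_2 = A_2 \Delta \{l\}$ from cube-adjacency to $A_2$, equating the two expressions for $A_1 \Delta A_2$ gives $\{i,k\} = \{j,l\}$; since $i \neq j$ one must check that the only admissible matching is $i = l,\ j = k$, which is what forces the four points onto a single $2$-face rather than a degenerate configuration.
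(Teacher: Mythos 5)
Your proof is correct and takes essentially the same route as the paper: both translate the disconnectedness of $Q^1 \sqcup Q^2$ into cube-adjacency ($|A \Delta B| = 1$ for every cross pair) via Corollary \ref{cor:onequiver}, and then pin the four vertices down to a single $2$-face of the hypercube, whose two diagonals carry the single back-and-forth arrows. The only difference is cosmetic: the paper first normalizes to $\psi_\emptyset \in Q^1$ and $\psi_{\{1\}}, \psi_{\{2\}} \in Q^2$ using the $B_n$-action, which makes the combinatorics immediate, whereas you do the symmetric-difference bookkeeping (including the matching subtlety $\{i,k\}=\{j,l\}$, which you handle correctly) without normalizing.
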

\begin{proof}
By the $B_n$-action, we may assume that one of the vertices of $Q^1$ is $\psi_\emptyset$ and that one of the vertices of $Q^2$ is $\psi_{\{1\}}$, as it has to correspond to a set with one element, which are all equivalent under the $S_n$-action. By assumption, any other vertex of $Q^2$ is not connected to $\psi_{\emptyset}$, which can only be if those vertices form a subset of $\{\psi_{\{k\}}:k \in N, k \neq 1\}$. Again by the $B_n$-action, we may assume that $\psi_{\{2\}}$ is one of the vertices of $Q^2$.
\par Now, any vertex other than $\psi_\emptyset$ in $Q^1$ corresponds to a set $\emptyset \neq A \in 2^N$ such that
$$
|A \Delta \{1\}| = |A \Delta \{2\}| = 1.
$$
There is only one set with this property, namely $A = \{1,2\}$. But then no other vertices are also possible in $Q^2$, for 
$$
|\{1,2\} \Delta \{k\}| = 3 \text{ if } k\neq 1,2.
$$
\end{proof}
Consequently, if $Q^1 \sqcup Q^2 \subset Q'_n$ is a full subquiver with the number of vertices of $Q^1$ strictly larger than 2, then $Q^2$ has only one vertex.
\begin{lemma}
For each vertex in $Q'_n$, there are exactly $n$ vertices not connected to it. The full subquiver on these $n$ vertices is the complete directed symmetric graph on $n$ points.
\end{lemma}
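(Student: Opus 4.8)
The plan is to read everything off from Corollary \ref{cor:onequiver}, which records that $Q'_n$ has $|A \Delta B| - 1$ arrows between two distinct vertices $A, B \in 2^N$. In particular, two distinct vertices $A, B$ fail to be connected precisely when $|A \Delta B| - 1 = 0$, that is, when $|A \Delta B| = 1$. The whole statement then reduces to counting sets at symmetric-difference distance $1$ and to a single elementary identity for the symmetric difference.

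First I would fix a vertex $A \in 2^N$ and enumerate the vertices not connected to it. A set $B$ satisfies $|A \Delta B| = 1$ if and only if $B$ is obtained from $A$ by toggling the membership of a single element $i \in N$, i.e.\ $B = A \Delta \{i\}$. Each index $i \in N$ produces exactly one such set, and distinct indices produce distinct sets (from $A \Delta \{i\} = A \Delta \{j\}$ one cancels $A$ to get $\{i\} = \{j\}$). Hence there are exactly $n$ vertices not connected to $A$, namely $B_i := A \Delta \{i\}$ for $i \in N$.

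Next I would identify the full subquiver on $\{B_1, \ldots, B_n\}$. For $i \neq j$, associativity and commutativity of the symmetric difference give
$$
B_i \Delta B_j = (A \Delta \{i\}) \Delta (A \Delta \{j\}) = \{i\} \Delta \{j\} = \{i,j\},
$$
so $|B_i \Delta B_j| = 2$. By Corollary \ref{cor:onequiver} there is therefore exactly $|B_i \Delta B_j| - 1 = 1$ arrow back and forth between each pair $B_i, B_j$. Since this holds for every unordered pair of these $n$ vertices, the full subquiver on them is the complete directed symmetric graph on $n$ points.

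There is no genuine obstacle here: the argument is entirely combinatorial once the cardinality formula of Corollary \ref{cor:onequiver} is in hand. The only point deserving a moment's care is confirming that the $n$ candidate neighbours $B_i$ are pairwise distinct, so that the resulting subquiver really has $n$ vertices; this is immediate, since $B_i$ is the unique vertex at symmetric-difference distance one from $A$ in coordinate direction $i$. One could alternatively invoke the $B_n$-symmetry to reduce to $A = \emptyset$, but the direct computation above needs no such reduction.
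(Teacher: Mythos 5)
Your proof is correct, and it is a mild variation on the paper's route rather than an exact match. The paper disposes of the lemma in one line: it observes the claim for the single vertex $\psi_\emptyset$ (where the non-neighbours are exactly the singletons $\{i\}$, pairwise at symmetric-difference distance $2$) and then transports the statement to every other vertex using the transitive $B_n$-action on $Q'_n$, which is the standing tool throughout that section. You instead work at an arbitrary vertex $A$ from the start, using the identity $(A \Delta \{i\}) \Delta (A \Delta \{j\}) = \{i,j\}$ to carry out the same arrow count from Corollary \ref{cor:onequiver} uniformly; this makes the symmetry reduction unnecessary, at the cost of a few extra lines of symmetric-difference algebra. Both arguments rest on exactly the same content --- the formula that two vertices $A \neq B$ carry $|A \Delta B| - 1$ arrows in each direction --- so the difference is one of presentation: the paper's version is shorter and consistent with its systematic use of the $B_n$-symmetry, while yours is self-contained and actually spells out the base computation that the paper leaves implicit even for $\psi_\emptyset$. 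Your care in checking that the $n$ sets $A \Delta \{i\}$ are pairwise distinct is a point the paper glosses over entirely.
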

\begin{proof}
This is true for $\psi_\emptyset$, for the other vertices one can use the $B_n$-action.
\end{proof}
\begin{corollary}
There are $n+1$ types of disconnected full subquivers in $Q'_n$:
\begin{itemize}
\item $Q^1 \sqcup Q^2$ with the number of vertices of $Q^1$ and of $Q^2$ equal to 2, connected as in lemma \ref{lemma:disconnected},
\item $Q^1 \sqcup Q^2$, with the number of vertices of $Q^1$ equal to $k$ for some $1\leq k \leq n$ and $Q^2$ the quiver with one vertex and no loops. In this case, $Q^1$ is the complete directed graph on $k$ vertices.
\end{itemize}
\end{corollary}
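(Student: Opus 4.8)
The plan is to read the classification directly off the two preceding lemmas, treating a disconnected full subquiver as a vertex set $S \subseteq 2^N$ that splits as $Q^1 \sqcup Q^2$ into two nonempty parts with no arrows between them. By Corollary \ref{cor:onequiver} two vertices $\psi_A,\psi_B$ fail to be connected precisely when $|A \Delta B| \leq 1$, i.e. when $A=B$ or $|A\Delta B|=1$. The whole argument is then a case split on the sizes of the two parts, so first I would arrange $|Q^1| \geq |Q^2|$ and distinguish $|Q^2| \geq 2$ from $|Q^2| = 1$.

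If both parts have at least two vertices, Lemma \ref{lemma:disconnected} applies verbatim and forces the underlying graph to be the two-disjoint-edges configuration on four vertices, with $|Q^1| = |Q^2| = 2$. This gives the first bullet and accounts for exactly one type. If instead $|Q^2| = 1$, write $Q^2 = \{\psi_w\}$. Since there are no arrows between $Q^1$ and $Q^2$, every vertex of $Q^1$ is non-adjacent to $\psi_w$, so $Q^1$ is contained in the set of vertices not connected to $\psi_w$. By the preceding lemma this set has exactly $n$ vertices and the full subquiver on them is the complete directed symmetric graph on $n$ points; a full subquiver of a complete graph is again complete, so $Q^1$ is the complete directed graph on $k := |Q^1|$ vertices with $1 \leq k \leq n$, which is the second bullet. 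Conversely, choosing $\psi_w$ together with any $k$ of its non-neighbours realizes each value of $k$, so all $n$ of these types genuinely occur; together with the four-vertex type this yields the claimed $n+1$ types.

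The one point I would treat with care is exhaustiveness, namely ruling out configurations with three or more pieces (a complete graph plus two isolated vertices, or two disjoint edges plus an extra point, and so on), which would otherwise produce spurious ``types''. These are all excluded by the same non-neighbour lemma: if $\psi_{w_1}$ and $\psi_{w_2}$ were two distinct isolated components, then $\psi_{w_2}$ and every vertex of the remaining part would lie among the non-neighbours of $\psi_{w_1}$, which are mutually adjacent, contradicting the assumed disconnection; the same remark shows an independent set has at most two vertices. Thus every disconnected full subquiver has exactly two connected components, and the case split above is complete. I expect this exhaustiveness check to be the only step requiring genuine argument, the identification of each individual type being immediate from Lemma \ref{lemma:disconnected} and the non-neighbour lemma.
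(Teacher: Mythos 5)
Your proof is correct and follows essentially the same route the paper intends: the corollary is stated without proof as an immediate consequence of the two preceding lemmas, and you use exactly those — Lemma \ref{lemma:disconnected} for the case where both parts have at least two vertices, and the non-neighbour lemma (non-neighbours of any vertex form a complete directed graph) for the case where one part is a single vertex. Your explicit exhaustiveness check, ruling out three or more connected components via the observation that independent sets have at most two vertices, fills in a step the paper leaves implicit, and it is sound.
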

\begin{proposition}
$\beta$ is a simple dimension vector of $Q'_n$ if and only if 
\begin{align}
|\beta| \leq \sum_{B \in 2^N} b_B |A \Delta B|
\label{conditionsimple}
\end{align}
for all $A \in 2^N$, unless we are in the quiver setting
\begin{center}
\begin{tikzpicture}
\node[vertice,circle] (a) at ( -1, 0) {$r$};
\node[vertice,circle] (b) at ( 1, 0) {$r$};
\path[->,font=\scriptsize,>=angle 90]
    (a) edge[<->]  (b)
    ;
\end{tikzpicture}
\end{center}
which is a simple dimension vector if and only if $r=1$ or in the quiver setting
\begin{center}
\begin{tikzpicture}[scale=1]
   \node[vertice,circle] (a) at ( 1, 1) {t};
   \node[vertice,circle] (b) at ( 1, -1) {s};
   \node[vertice,circle] (c) at (  -1, 1) {s};
   \node[vertice,circle] (d) at (  -1, -1) {t};
%   \tikzset{every node/.style={fill=white}} 
   \path[->,font=\scriptsize,>=angle 90]
    (a) edge[<->] (d)
    (b) edge[<->] (c)
    ;
\end{tikzpicture}
\end{center}
with $st \neq 0$, which is never a simple dimension vector.
\label{prop:simplocal}
\end{proposition}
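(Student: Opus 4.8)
The plan is to reduce the whole statement to Theorem \ref{th:simpledim} after translating its hypotheses into the combinatorics of the hypercube. First I would compute the Euler pairing of $\beta=(b_A)_{A\in 2^N}$ against a vertex $\epsilon_A=\psi_A$. Using the formula $(M_n)_{A,B}=1-|A\Delta B|$ for the Euler matrix, one gets
$$
\chi_{Q'_n}(\beta,\epsilon_A)=\sum_{B\in 2^N} b_B\bigl(1-|A\Delta B|\bigr)=|\beta|-\sum_{B\in 2^N} b_B\,|A\Delta B|,
$$
and since $Q'_n$ is symmetric (so $M_n$ is symmetric) the two inequalities $\chi_{Q'_n}(\beta,\epsilon_A)\le 0$ and $\chi_{Q'_n}(\epsilon_A,\beta)\le 0$ coincide, each being exactly \eqref{conditionsimple} at the vertex $A$. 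I would then note that \eqref{conditionsimple} is automatic at every $A\notin\Supp(\beta)$: there $b_A=0$, so every $B$ with $b_B\neq 0$ has $|A\Delta B|\ge 1$ and hence $\sum_B b_B|A\Delta B|\ge|\beta|$. Thus requiring \eqref{conditionsimple} ``for all $A\in 2^N$'' is the same as requiring it on $\Supp(\beta)$, which is precisely the numerical part of the second bullet of Theorem \ref{th:simpledim}.

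The argument then splits according to $\Supp(\beta)$, which for a symmetric quiver is strongly connected iff connected. If $\Supp(\beta)$ is connected and not of type $\tilde A_k$, Theorem \ref{th:simpledim} says $\beta$ is simple exactly when the numerical conditions hold, i.e.\ exactly when \eqref{conditionsimple} holds, which is the main statement. The $\tilde A_k$ possibility must be isolated: because every full subquiver of $Q'_n$ is symmetric, a cyclically oriented $\tilde A_k$ can only be the two-vertex cycle $\tilde A_1$ with one arrow in each direction, i.e.\ a pair $\psi_A,\psi_B$ with $|A\Delta B|=2$. On such a support \eqref{conditionsimple} forces $b_A=b_B=:r$, and by the first bullet of Theorem \ref{th:simpledim} the vector is simple iff $r=1$; this is the first exceptional setting.

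It remains to treat disconnected $\Supp(\beta)$, where $\beta$ is never simple and I must check whether \eqref{conditionsimple} can still hold. Here I would invoke the classification of the disconnected full subquivers of $Q'_n$ (the $n+1$ types). If $\Supp(\beta)$ has an isolated vertex $C$, then $|C\Delta B|=1$ for every other vertex $B$ of the support, so \eqref{conditionsimple} at $C$ reads $|\beta|\le|\beta|-b_C$, forcing $b_C\le 0$ --- impossible, so \eqref{conditionsimple} simply fails and there is nothing to reconcile. The remaining disconnected type is the square-diagonal configuration of Lemma \ref{lemma:disconnected}: two connected pairs at mutual distance $2$ with all four cross-distances equal to $1$. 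Evaluating \eqref{conditionsimple} at each of the four vertices forces the two dimensions in each pair to be equal, producing exactly the $(t,s,s,t)$ pattern of the second exceptional setting, which is disconnected and hence never simple. Assembling the cases then shows that, away from the two exceptional settings, \eqref{conditionsimple} holding forces $\Supp(\beta)$ to be connected and non-$\tilde A_k$ (hence simple), while simplicity forces \eqref{conditionsimple}.

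The step I expect to be delicate is not the Euler-form bookkeeping but showing that the purely numerical condition \eqref{conditionsimple} already controls connectivity of the support, so that the separate ``strongly connected'' hypothesis of Theorem \ref{th:simpledim} need not be imposed by hand. This is where the hypercube geometry enters essentially: one has to rule out every disconnected support compatible with \eqref{conditionsimple} except the single square-diagonal configuration, and the clean way to do this is precisely the finite classification of disconnected full subquivers together with the observation that an isolated vertex violates \eqref{conditionsimple}.
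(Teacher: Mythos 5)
Your proposal is correct and takes essentially the same route as the paper: both reduce to Theorem \ref{th:simpledim} via the Euler-form identity $\chi_{Q'_n}(\beta,\psi_A)=|\beta|-\sum_{B}b_B|A\Delta B|$, isolate the two-vertex $\tilde{A}_1$ support as the only possible oriented cycle in a symmetric quiver, and dispose of disconnected supports using the classification into the square-diagonal type and the isolated-vertex type. Your two streamlinings --- observing that \eqref{conditionsimple} holds automatically at any vertex outside $\Supp(\beta)$ (which replaces the paper's hand-verification for the square case), and running the isolated-vertex argument at an arbitrary vertex $C$ rather than normalizing to $\psi_\emptyset$ by the $B_n$-action --- are valid but do not change the structure of the argument.
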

\begin{proof}
First, we will show that if $\Supp(\beta)$ is the quiver from lemma \ref{lemma:disconnected}, then $\beta$ satisfies condition \ref{conditionsimple} if and only if $\beta$ lies in the $B_n$-orbit of the dimension vector $(b_\emptyset,b_{\{1\}},b_{\{2\}},b_{\{1,2\}},0,0,\ldots,0)$ with $b_\emptyset=b_{\{1,2\}}$, $b_{\{1\}}=b_{\{2\}}$. We may assume by the $B_n$-action that $\beta = (b_\emptyset,b_{\{1\}},b_{\{2\}},b_{\{1,2\}},0,0,\ldots,0)$ fulfils \ref{conditionsimple}, with $b_\emptyset b_{\{1\}}b_{\{2\}}b_{\{1,2\}}\neq 0$. Set $A$ equal to respectively $\emptyset,\{1\},\{2\},\{1,2\}$, then this implies
\begin{align*}
b_\emptyset+b_{\{1\}}+b_{\{2\}}+b_{\{1,2\}}&\leq                   b_{\{1\}} + b_{\{2\}} + 2b_{\{1,2\}},\\
b_\emptyset+b_{\{1\}}+b_{\{2\}}+b_{\{1,2\}}&\leq   b_\emptyset + 2 b_{\{2\}} + b_{\{1,2\}},\\
b_\emptyset+b_{\{1\}}+b_{\{2\}}+b_{\{1,2\}}&\leq   b_\emptyset + 2 b_{\{1\}} + b_{\{1,2\}},\\
b_\emptyset+b_{\{1\}}+b_{\{2\}}+b_{\{1,2\}}&\leq 2 b_\emptyset +   b_{\{1\}} + b_{\{2\}}.
\end{align*}
From this it follows that $b_\emptyset = b_{\{1,2\}}$ and $b_{\{1\}} = b_{\{2\}}$. Condition \ref{conditionsimple} therefore becomes:
$$
2b_\emptyset+2b_{\{1\}} \leq b_{\emptyset}(|A|+|A\Delta\{1,2\}|)+ b_{\{1\}}(|A\Delta\{1\}|+|A\Delta\{2\}|).
$$
Therefore, for the other $A \in 2^N$, we find:
\begin{itemize}
\item If $|A|\geq 3$: then $|A|\geq 3$ and $|A\Delta\{1\}|\geq 2$, so condition \ref{conditionsimple} is satisfied.
\item If $|A| = 2$: if $A \cap \{1,2\} = \emptyset$ then \ref{conditionsimple} is again satisfied. If $|A \cap \{1,2\}| = 1$, then either $|A \Delta \{1\}|\geq 3$ or $|A \Delta \{2\}|\geq 3$ so condition \ref{conditionsimple} is again satisfied.
\item If $|A|=1$, then $|A\Delta\{1,2\}| \geq 3$ and $|A\Delta\{1\}|\geq 2$, so also in this case condition \ref{conditionsimple} is satisfied.
\end{itemize}

\par Now, if $\Supp(\beta) = Q^1 \sqcup Q^2$ with $Q^1$ consisting of just one vertex, then we may assume that $\beta = (b_\emptyset,b_{\{1\}},b_{\{2\}},b_{\{3\}},\ldots,b_{\{k\}},0,\ldots,0)$ for some $k \leq n$. For $A = \emptyset$, condition \ref{conditionsimple} would say
$$
b_\emptyset+b_{\{1\}}+b_{\{2\}}+b_{\{3\}}+\ldots,b_{\{k\}} \leq b_{\{1\}}+b_{\{2\}}+b_{\{3\}}+\ldots,b_{\{k\}},
$$
which can only be if $b_\emptyset=0$, which contradicts that $\psi_{\emptyset} \in \Supp(\beta)$.
\par The dimension vector $\beta=(b_\emptyset,0,0,b_{\{1,2\}},\ldots,0)$ with $b_\emptyset = b_{\{1,2\}}$ does fulfil condition \ref{conditionsimple}, but then we are in the extended Dynkin case of theorem \ref{th:simpledim}, which is only simple if $b_\emptyset=b_{\{1,2\}}=1$.

\par In all other cases, if $\beta$ is a simple dimension vector, then we find for all $A \in 2^N$
$$
\chi_{Q'_n}(\psi_A,\beta)=\chi_{Q'_n}(\beta,\psi_A) = \sum_{B \in 2^N} b_B(1-|A\Delta B|)\leq 0,
$$
which is equivalent to condition \ref{conditionsimple}.
\end{proof}
This should now be translated to a condition on the dimension vectors for $Q_n$. We will assume that
$ \alpha = (a_i^+,a_i^-)_{i=1}^n$ with $a_i^++a_i^-=m$ and
\begin{align}
a_1^+\geq a_2^+ \geq \ldots a_n^+ \geq a_n^- \geq a_{n-1}^- \geq \ldots \geq a_1^-.
\label{al:condalpha}
\end{align}
\begin{proposition}
The following semisimple representation is an element of $\rep_\alpha \Z_2^{*n}$:
\begin{align*}
M_\alpha &=\psi_\emptyset^{a_n^+} \oplus \psi_{\{n\}}^{a_{n-1}^+-a_n^+} \oplus \ldots \oplus\psi_{\{i+1,\ldots,n\}}^{a_i^+- a_{i+1}^+}\oplus \ldots \oplus \psi_{\{2,\ldots,n\}}^{a_1^+-a_2^+} \oplus \psi_N^{a_1^-}\\
         &=\psi_\emptyset^{a_n^+}\oplus \psi_{\{1\}^c}^{a_{2}^--a_1^-} \oplus \ldots \oplus\psi_{\{1,\ldots,i\}^c}^{a_{i+1}^--a_i^-}\oplus \ldots \oplus \psi_{\{1,\ldots,n-1\}^c}^{a_n^--a_{n-1}^-} \oplus \psi_N^{a_1^-}.
\end{align*}
\end{proposition}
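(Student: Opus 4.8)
The plan is to reduce the claim to a bookkeeping identity for dimension vectors. Since $M_\alpha$ is by construction a direct sum of one-dimensional characters $\psi_A$, it is automatically a representation of $\Z_2^{*n}$, so the only thing to verify is that, through the dictionary of the previous subsection, it lands in the piece labelled by $\alpha$. Recall that a semisimple point $\oplus_A \psi_A^{b_A}$ has $Q'_n$-dimension vector $\beta = (b_A)_{A \in 2^N}$ and corresponds to the $Q_n$-dimension vector $\sum_A b_A \alpha_A$, where $\alpha_A$ records $(1,0)$ in slot $j$ when $j \notin A$ and $(0,1)$ when $j \in A$. Hence the $j$-th pair of $\sum_A b_A \alpha_A$ is $\bigl(\sum_{A:\,j \notin A} b_A,\ \sum_{A:\,j \in A} b_A\bigr)$, and it suffices to prove $\sum_{A:\,j \in A} b_A = a_j^-$ for every $j$ (the plus-coordinate then follows from $a_j^+ + a_j^- = m$).

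First I would introduce the nested chain $S_k = \{k, k+1, \ldots, n\}$ for $1 \leq k \leq n$, together with $S_{n+1} = \emptyset$ and $S_1 = N$, and read off from the first displayed expression the multiplicities
\[
b_{S_{n+1}} = a_n^+, \qquad b_{S_k} = a_{k-1}^+ - a_{k}^+ \ (2 \leq k \leq n), \qquad b_{S_1} = a_1^-,
\]
all of which are nonnegative by the ordering chain (\ref{al:condalpha}); every other $b_A$ is zero. These are the only sets carrying mass, so the whole computation takes place along this chain.

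The key step is a telescoping sum. Since $j \in S_k$ exactly when $k \leq j$, only the first $j$ links of the chain contribute, giving
\[
\sum_{A:\,j \in A} b_A = b_{S_1} + \sum_{k=2}^{j} b_{S_k} = a_1^- + \sum_{k=2}^{j}(a_{k-1}^+ - a_k^+) = a_1^- + a_1^+ - a_j^+ = m - a_j^+ = a_j^-,
\]
which is exactly what is needed; letting $j$ range over all indices (and checking $\sum_A b_A = m$ by the same telescope) shows that the $Q_n$-dimension vector of $M_\alpha$ equals $\alpha$. To see that $M_\alpha$ genuinely lies in $\rep_\alpha \Z_2^{*n}$ rather than merely in the ambient component, I would order the summands so that $\phi(e_1) = \diag(1_{a_1^+}, -1_{a_1^-})$; then all $\phi(e_i)$ are simultaneously diagonal with $a_i^+$ entries $+1$ and $a_i^-$ entries $-1$, so each block $B(i)$ can be taken to be a permutation matrix conjugating the standard involution to $\phi(e_i)$. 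These are invertible, so the point sits in the open invertibility locus defining $\rep_\alpha \Z_2^{*n}$. Finally I would remark that the second displayed expression describes the same representation, since $\{1,\ldots,i\}^c = \{i+1,\ldots,n\}$ and $a_i^+ - a_{i+1}^+ = a_{i+1}^- - a_i^-$ by $a_i^+ + a_i^- = m$.

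The computation is elementary, so there is no deep obstacle; the only point requiring genuine care is the indexing of the nested sets (keeping $S_{n+1} = \emptyset$ and the two boundary multiplicities $a_n^+$ and $a_1^-$ straight) so that the telescope collapses correctly, together with the small check that the diagonal point really falls in the invertibility locus and not merely in its closure.
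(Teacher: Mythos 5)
Your proposal is correct and is essentially the paper's argument: the paper verifies membership in $\rep_\alpha \Z_2^{*n}$ by telescoping the character values $\chi_\alpha(e_i)$ down to $a_i^+ - a_i^-$, while you telescope the multiplicities $\sum_{A:\,i \in A} b_A$ down to $a_i^-$ along the same nested chain of sets, and since $\chi_\alpha(e_i) = m - 2a_i^-$ these are the same bookkeeping identity. Your additional check that the diagonal representation lies in the open invertibility locus (via permutation matrices $B(i)$) is a harmless refinement of a point the paper leaves implicit.
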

\begin{proof}
Let $\chi_\alpha$ be the character function associated to $M_\alpha$, then we find
\begin{align*}
\chi_\alpha(e_i) &= a_n^+ - a_1^- - \sum_{j=1}^{i-1}(a_{j+1}^--a_j^-) + \sum_{j=i}^{n-1} (a_{j+1}^--a_j^-)\\
				 &= a_n^+ - a_1^- - (a_i^- - a_1^-) + (a_n^- - a_i^-)\\
				 &= m - 2a_i^-\\
				 &= a_i^+ - a_i^-.
\end{align*}
Together with the condition $a_i^+ + a_i^- = m$ we find that $M_\alpha \in \rep_\alpha \Z_2^{*n}$.
\end{proof}
\begin{proposition}
If $\alpha$ is a dimension vector that fulfils condition \ref{al:condalpha}, then $\alpha$ is a simple dimension vector if and only if 
$$
\sum_{i=1}^n a_i^+ \leq m(n-1),
$$
unless $\alpha = (2k,0;2k,0;\ldots;2k,0;2k;0;k,k;k,k)$, $m=2k$ and $k \neq 1$.
\end{proposition}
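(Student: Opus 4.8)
The plan is to transport the question to the one quiver $Q'_n$ through the étale local structure and then to apply Proposition \ref{prop:simplocal} to the dimension vector of $M_\alpha$.

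\emph{Reduction to $Q'_n$.} The variety $\rep_\alpha \Z_2^{*n}$ is the open subvariety of the affine space $\rep_\alpha Q_n$ on which every block matrix $B(i)$ is invertible, hence it is irreducible, and its locus of simple representations is either empty or dense open. The representation $M_\alpha$ is semisimple and, by the previous proposition, lies in $\rep_\alpha \Z_2^{*n}$. Writing $\beta = \beta(M_\alpha) = (b_A)_{A \in 2^N}$ for its multiplicity vector, the étale slice morphism
$$
\GL_\alpha(\C)\times^{\Stab(M_\alpha)}\rep_\beta Q'_n \longrightarrow \rep_\alpha \Z_2^{*n}
$$
of \cite[Theorem 3.14]{LBBook} is étale onto a non-empty open neighbourhood of the orbit of $M_\alpha$ and identifies the simple representations of $Q'_n$ of dimension vector $\beta$ with the simple representations of $\Z_2^{*n}$ it parametrises. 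As a non-empty open subset of the irreducible variety $\rep_\alpha \Z_2^{*n}$, the image meets the simple locus precisely when that locus is non-empty; therefore $\rep_\alpha \Z_2^{*n}$ contains a simple representation if and only if $\beta$ is a simple dimension vector for $Q'_n$. It thus suffices to run the criterion of Proposition \ref{prop:simplocal} on $\beta$.

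\emph{Evaluating condition \eqref{conditionsimple}.} For any $\beta=(b_A)$ coming from a semisimple $M\in\rep_\alpha\Z_2^{*n}$ one has, by the definition of the generators $\alpha_A$, that $\sum_{B\ni e}b_B = a_e^-$ and $\sum_{B\not\ni e}b_B = a_e^+$ for each $e\in N$. Splitting the symmetric difference elementwise,
$$
\sum_{B\in 2^N} b_B\,|A\Delta B| \;=\; \sum_{e\in N}\Bigl([e\in A]\,a_e^+ + [e\notin A]\,a_e^-\Bigr),
$$
and minimising each summand independently gives $\min_{A\in 2^N}\sum_B b_B|A\Delta B| = \sum_{e=1}^n\min(a_e^+,a_e^-)$. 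Condition \eqref{conditionsimple} requires this minimum to be at least $|\beta|=m$. Since \eqref{al:condalpha} yields $a_e^+\ge a_n^+\ge a_n^-\ge a_e^-$, and hence $\min(a_e^+,a_e^-)=a_e^-=m-a_e^+$, the condition becomes $m\le nm-\sum_i a_i^+$, that is, $\sum_i a_i^+\le m(n-1)$.

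\emph{The exceptional settings.} Finally I would determine when $\beta$ lands in one of the two quiver settings excluded by Proposition \ref{prop:simplocal}. Computing $\beta(M_\alpha)$ explicitly, it is supported on the chain $C_k=\{n-k+1,\dots,n\}$, $0\le k\le n$, with multiplicities whose partial sums telescope to $\sum_{l\le k} b_{C_l}=a_{n-k}^+$; in particular $b_\emptyset=a_n^+\ge m/2>0$, so $\emptyset$ always belongs to the support. Because the support is totally ordered by inclusion it can never be the four-vertex square of Lemma \ref{lemma:disconnected}, so that setting is impossible. Two chain vertices $C_i,C_j$ are joined in $Q'_n$ exactly when $|i-j|=|C_i\Delta C_j|\ge 2$; a direct check shows that any disconnection of the support isolates some vertex $C_p$, at which \eqref{conditionsimple} fails, so strong connectedness comes for free once \eqref{conditionsimple} holds. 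The only remaining excluded setting, two equal-dimensional vertices joined by a single arrow, forces the support to be $\{\emptyset,C_2\}$ with $b_\emptyset=b_{C_2}$; unwinding the telescoping multiplicities, this occurs exactly for $\alpha=(2k,0;\dots;2k,0;k,k;k,k)$ with $m=2k$, which is the $\tilde{A}_1$-setting $(k,k)$ and is simple if and only if $k=1$. Assembling the three steps gives the proposition.

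\emph{Main obstacle.} The conceptual heart is the reduction: one must know that probing the single very degenerate representation $M_\alpha$ decides simplicity for the entire component, which rests on the irreducibility of $\rep_\alpha \Z_2^{*n}$ together with the étale slice theorem. The fussiest part is the exceptional-shape bookkeeping—checking that a chain support can only ever realise the balanced $\tilde{A}_1$ among extended Dynkin configurations and that this pins $\alpha$ down to the displayed vector.
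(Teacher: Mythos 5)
Your proof is correct, and its skeleton is the paper's: evaluate the criterion of Proposition \ref{prop:simplocal} on the multiplicity vector $\beta$ of the canonical semisimple representative $M_\alpha$, then dispose of the excluded settings. The difference lies in the computational core and in how much is left implicit at the two ends. For the inequality, the paper substitutes the telescoping chain multiplicities $b_{\{i+1,\ldots,n\}}=a_i^+-a_{i+1}^+$, rewrites the sum via the signs $\delta^i_A$, and then argues orbit-by-orbit over the $S_n$-orbits $\mathcal{O}_k$ of subsets, locating the binding constraint at $A=N$. You instead prove the representative-independent identity
$$
\sum_{B\in 2^N} b_B\,|A\Delta B| \;=\; \sum_{e\in N}\bigl([e\in A]\,a_e^+ + [e\notin A]\,a_e^-\bigr),
$$
valid for \emph{any} semisimple point of $\rep_\alpha \Z_2^{*n}$ by counting $\pm 1$-eigenvalues of the $\phi(e_i)$, after which the quantification over all $A\in 2^N$ collapses to a termwise minimization giving $\sum_e \min(a_e^+,a_e^-)=\sum_e a_e^-$ under condition \ref{al:condalpha}; this is cleaner, avoids the orbit analysis entirely, and makes transparent exactly where the ordering hypothesis is used. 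At the two ends you are more explicit than the paper: the étale-slice reduction is only implicit in the paper's framework, and the exceptional vectors are dismissed there in a single sentence (``the dimension vectors that are excluded come from the dimension vectors of $Q'_n$ of proposition \ref{prop:simplocal} that are excluded''), whereas you verify that the square setting of Lemma \ref{lemma:disconnected} cannot occur on a support totally ordered by inclusion, that a disconnected chain support forces an isolated vertex at which condition \ref{conditionsimple} fails (true: splitting a set of integers into two parts of size $\geq 2$ with all cross-distances $\leq 1$ is impossible, and the isolated vertex $C_p$ then sees every other support vertex at distance exactly $1$, so the sum equals $|\beta|-b_{C_p}<|\beta|$), and that the $\tilde{A}_1$ setting, which must involve $\emptyset$ since $b_\emptyset=a_n^+\geq m/2>0$, unwinds precisely to $\alpha=(2k,0;\ldots;2k,0;k,k;k,k)$. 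These checks fill a gap in exposition rather than in substance, and your argument as a whole is a valid, somewhat more general, proof of the proposition.
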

\begin{proof}
In order for $\alpha$ to be a simple dimension vector, we need that the dimension vector $\beta = (b_B)_{B \in 2^N}$ with $b_{\{i+1,\ldots,n\}} = a_i^+-a_{i+1}^+$ for $2\leq i \leq n-1$, $b_\emptyset = a_n^+$, $b_N = a_1^-$ and the rest equal to 0 fulfils condition \ref{conditionsimple}. Condition \ref{conditionsimple} then becomes for each $A \in 2^N$
\small
\begin{align}
|\beta| = m &\leq \sum_{B \in 2^N} b_B |A \Delta B|= a_n^+ |A| + a_1^- |A^c|+\sum_{i=1}^{n-1} (a_i^+-a_{i+1}^+)|A \Delta \{i+1,\ldots,n\}|.
\label{inequalitydim}
\end{align}
\normalsize
This last summation is equal to
\begin{align*}
m|A^c|+\sum_{i=1}^n (|A \Delta \{i+1,\ldots,n\}|-|A \Delta \{i,\ldots,n\}|)a_i^+.
\end{align*}
Define for each $A \in 2^N, i \in N$
$$
\delta^i_A = \begin{cases}
1 &\text{if } i \in A\\
-1 &\text{if } i \not\in A
\end{cases}
$$
Then inequality \ref{inequalitydim} is equivalent to
$$
0 \leq m(|A^c|-1)+\sum_{i=1}^n \delta^i_A a_i^+ \Leftrightarrow \sum_{i=1}^n \delta^i_{A^c} a_i^+ \leq m(|A^c|-1),
$$
for each $A \in 2^N$, or, as $(-)^c$ is an involution of $2^N$,
$$
\sum_{i=1}^n \delta^i_{A} a_i^+ \leq m(|A|-1).
$$
Let $\mathcal{O}_k$ for $k\in N \cup \{0\}$ be the orbits in $2^N$ under the $S_n$-action. Then the value $m(|A|-1)$ is constant on $\mathcal{O}_k$. Due to condition \ref{al:condalpha}, for every $k \in N\cup \{0\}$, $\sum_{i=1}^n \delta^i_{A} a_i^+$ reaches its maximum on $\mathcal{O}_k$ in the set $\{1,2,\ldots,k\}$.
\begin{itemize}
\item If $A = \emptyset$, then the condition becomes
$$
\sum_{i=1}^n - a_i^+ \leq -m \Leftrightarrow 0 \leq -m + \sum_{i=1}^n( m - a_i^-)\Leftrightarrow \sum_{i=1}^n  a_i^-\leq m(n-1).
$$
\item If $A = \{1,\ldots,k\}$, then this condition is equivalent to
\begin{align*}
\sum_{i=1}^k a_i^+ \leq m(k-1) + \sum_{i=k+1}^n a_i^+ &\Leftrightarrow \sum_{i=1}^k a_i^+ \leq m(k-1) + \sum_{i=k+1}^n (m-a_i^-) \\&\Leftrightarrow  \sum_{i=1}^k  a_i^+ + \sum_{i=k+1}^n  a_i^-\leq m(n-1).
\end{align*}
\end{itemize}
In particular, as $a_i^- \leq a_i^+$, it follows that 
$$
 \sum_{i=1}^n  a_i^+ \leq m(n-1)
$$
is a necessary (for $A=N$) and sufficient condition.
\par The dimension vectors of $Q_n$ that are excluded come from the dimension vectors of $Q'_n$ of proposition \ref{prop:simplocal} that are excluded.
\end{proof}
\begin{theorem}
Let $\alpha = (a_1^+,a_1^-;a_2^+,a_2^-;\ldots;a_n^+,a_n^-)$ be a dimension vector of $Q_n$, then $\alpha$ is a simple dimension vector if and only if
$$
\sum_{i=1}^n \max\{a_i^+,a_i^-\} \leq m(n-1),
$$
unless $\alpha$ lies in the $B_n$-orbit of $(2k,0;2k,0;\ldots;2k,0;2k;0;k,k;k,k)$ with $m=2k$ and $k \neq 1$.
\label{th:simpledimgen}
\end{theorem}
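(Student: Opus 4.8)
The plan is to reduce the general statement to the previous proposition, which already settles the criterion for dimension vectors satisfying the ordering condition \ref{al:condalpha}, by exploiting the $B_n$-symmetry. The two ingredients I would isolate first are invariance statements. On one hand, being a simple dimension vector is a $B_n$-invariant property: since $B_n$ acts on $\C(\Z_2^{*n})$ as algebra automorphisms, each $\sigma \in B_n$ induces an isomorphism $\rep_\alpha \Z_2^{*n} \cong \rep_{\sigma\cdot\alpha} \Z_2^{*n}$ carrying simple representations to simple representations, so $\alpha$ is a simple dimension vector if and only if $\sigma \cdot \alpha$ is. On the other hand, the quantity $\sum_{i=1}^n \max\{a_i^+, a_i^-\}$ is manifestly $B_n$-invariant, as the $S_n$-part only permutes the summands while the $\Z_2^n$-part only interchanges $a_i^+$ with $a_i^-$ inside individual maxima.

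Next I would show that every $B_n$-orbit of dimension vectors contains a representative satisfying \ref{al:condalpha}. First apply the $\Z_2^n$-part to arrange $a_i^+ \geq a_i^-$ for every $i$ (swapping the two entries of any pair for which this fails); then apply a permutation in $S_n$ to sort the pairs so that $a_1^+ \geq a_2^+ \geq \ldots \geq a_n^+$. Because $a_i^- = m - a_i^+$, this descending order on the $a_i^+$ forces $a_1^- \leq a_2^- \leq \ldots \leq a_n^-$, that is $a_n^- \geq a_{n-1}^- \geq \ldots \geq a_1^-$, while $a_n^+ \geq a_n^-$ holds by the first step; concatenating the two chains yields exactly condition \ref{al:condalpha}.

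With such a representative in hand, I would observe that $\max\{a_i^+, a_i^-\} = a_i^+$ for every $i$, since \ref{al:condalpha} gives $a_i^+ \geq a_n^+ \geq a_n^- \geq a_i^-$. Hence $\sum_{i=1}^n \max\{a_i^+, a_i^-\} = \sum_{i=1}^n a_i^+$, and the inequality of the theorem becomes precisely $\sum_{i=1}^n a_i^+ \leq m(n-1)$. The previous proposition then provides the equivalence for the ordered representative, together with its single excluded vector $(2k,0;2k,0;\ldots;2k,0;k,k;k,k)$ for $m = 2k$, $k \neq 1$. Finally, because both the property of being simple and the inequality are $B_n$-invariant, the equivalence descends verbatim to the original $\alpha$, while the excluded ordered vector is replaced by its entire $B_n$-orbit, as asserted.

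I do not expect a genuine obstacle: the mathematical content resides in the previous proposition and in proposition \ref{prop:simplocal}, and this theorem is essentially their $B_n$-symmetrization. The only points demanding care are the verification that the canonical form \ref{al:condalpha} is always reachable inside an orbit (the sorting step above) and the check that the excluded vector indeed satisfies \ref{al:condalpha} and saturates the bound, $\sum_{i=1}^n \max\{a_i^+,a_i^-\} = 2k(n-2) + 2k = m(n-1)$, so that it is correctly flagged as a case where the inequality holds yet simplicity fails.
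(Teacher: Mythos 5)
Your proposal is correct and follows exactly the route the paper intends: the paper states this theorem without a separate proof, treating it as the immediate $B_n$-symmetrization of the preceding proposition for vectors satisfying condition \ref{al:condalpha}, which is precisely your argument. Your write-up merely makes explicit the details the paper leaves implicit (invariance of simplicity and of $\sum_i \max\{a_i^+,a_i^-\}$ under $B_n$, reachability of the ordered representative, and the transport of the exceptional vector to its full orbit), all of which check out.
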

\begin{example}
The symmetric group $S_{n+1}$ is a quotient of $\Z_2^{*n}$ by taking the generators $(i,i+1), 1\leq i \leq n$ of $S_{n+1}$. As is well known, there exists a $n$-dimensional simple representation $V_{n}$ of $S_{n+1}$ such that $V_n \oplus \psi_{\emptyset}$ is the permutation representation of $S_{n+1}$. The representation $V_{n}$ lies in the component $\rep_\alpha \Z_2^{*n}$ with $\alpha = (n-1,1)_{i=1}^n$. For each $n$, we see that the condition of theorem \ref{th:simpledimgen} for this component is true, we even have equality:
$$
\sum_{i=1}^n \max\{a_i^+,a_i^-\} = n(n-1).
$$
However, if we look at the $n+1$-dimensional permutation representation $V_n \oplus \psi_\emptyset$, then we get $\alpha = (n,1)_{i=1}^n$ and therefore
$$
\sum_{i=1}^n \max\{a_i^+,a_i^-\} = n^2 > (n+1)(n-1)=n^2-1.
$$
So $\alpha = (n,1)_{i=1}^n$ is not a simple dimension vector.
\end{example}
\begin{proposition}
If $\alpha$ is a simple dimension vector, then
$$
\dim \iss_\alpha \Z_2^{*n}=2 (\sum_{i=1}^n a_i^+a_i^-)-(m^2-1).
$$
\label{dimsimple}
\end{proposition}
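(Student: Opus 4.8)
The plan is to localise the quotient $\iss_\alpha \Z_2^{*n}$ at a simple representation and reduce the computation to an Euler-form evaluation for the quiver $Q_n$. Since $\alpha$ is a simple dimension vector, $\rep_\alpha \Z_2^{*n}$ contains a simple $m$-dimensional representation $V$. As $\rep_\alpha \Z_2^{*n}$ is a nonempty open subset (it contains the semisimple representation $M_\alpha$) of the irreducible affine space $\rep_\alpha Q_n$, it is itself irreducible of dimension $\dim \rep_\alpha Q_n = (n-1)m^2$, and its simple locus is a dense open $\GL_\alpha$-stable subset. Consequently $\dim \iss_\alpha \Z_2^{*n}$ equals the local dimension of the quotient at the point $[V]$.

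Next I would exploit the local structure at a simple point. By Schur's lemma $\Hom_{\Z_2^{*n}}(V,V)=\C$, so the stabiliser of $V$ in $\GL_\alpha$ is exactly the one-dimensional torus of scalars, which acts trivially; thus on the simple locus $\PGL_\alpha = \GL_\alpha/\C^*$ acts with trivial stabiliser, and the orbits have dimension $\dim\GL_\alpha - 1 = \sum_{i=1}^n ((a_i^+)^2+(a_i^-)^2) - 1$. Equivalently, by the étale slice theorem the quotient is smooth at $[V]$ with tangent space $\Ext^1_{\Z_2^{*n}}(V,V)$. Either description yields $\dim \iss_\alpha \Z_2^{*n} = \dim \rep_\alpha Q_n - (\dim\GL_\alpha-1) = \dim \Ext^1_{\Z_2^{*n}}(V,V) = 1 - \chi_{Q_n}(\alpha,\alpha)$, where the last equality uses the identity $\dim\Hom_{\Z_2^{*n}}(V,V)-\dim\Ext^1_{\Z_2^{*n}}(V,V)=\chi_{Q_n}(\alpha,\alpha)$ established earlier together with $\dim\Hom=1$.

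It then remains to evaluate $\chi_{Q_n}(\alpha,\alpha)$ from the definition of the Euler form of $Q_n$. The vertex contribution is $\sum_{i=1}^n ((a_i^+)^2+(a_i^-)^2)$, while the four arrows joining $T_1$ to each $T_i$ contribute $\sum_{i=2}^n (a_1^++a_1^-)(a_i^++a_i^-) = (n-1)m^2$, so that $\chi_{Q_n}(\alpha,\alpha) = \sum_{i=1}^n((a_i^+)^2+(a_i^-)^2) - (n-1)m^2$. Substituting $(a_i^+)^2+(a_i^-)^2 = m^2 - 2a_i^+a_i^-$, valid because $a_i^++a_i^-=m$, gives $\chi_{Q_n}(\alpha,\alpha) = m^2 - 2\sum_{i=1}^n a_i^+a_i^-$ and hence the claimed value $\dim \iss_\alpha \Z_2^{*n} = 1 - m^2 + 2\sum_{i=1}^n a_i^+a_i^- = 2(\sum_{i=1}^n a_i^+a_i^-) - (m^2-1)$.

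The point requiring care is the localisation step rather than the final arithmetic: one must be sure that a generic representation of $\rep_\alpha \Z_2^{*n}$ is genuinely simple, so that the generic stabiliser is exactly the scalar torus and the quotient is geometric of the expected dimension on a dense open set. This is precisely what ``$\alpha$ is a simple dimension vector'' provides, and the hypothesis automatically excludes the degenerate $B_n$-orbit of Theorem \ref{th:simpledimgen} where this would fail. I do not expect any serious obstacle beyond keeping the bookkeeping between the $\GL_m$-picture for $\Z_2^{*n}$ and the $\GL_\alpha$-picture for $Q_n$ consistent.
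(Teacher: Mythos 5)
Your proposal is correct and follows essentially the same route as the paper: the paper's proof is exactly your orbit-dimension count $\dim \iss_\alpha \Z_2^{*n} = (n-1)\dim\GL_m(\C) - (\dim\GL_\alpha(\C)-1)$ followed by the substitution $(a_i^+)^2+(a_i^-)^2 = m^2-2a_i^+a_i^-$. Your Euler-form reformulation via $\dim\Ext^1_{\Z_2^{*n}}(V,V) = 1-\chi_{Q_n}(\alpha,\alpha)$ and the justification of the localisation step are just more explicit versions of the same bookkeeping the paper leaves implicit.
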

\begin{proof}
If $\alpha$ is a simple dimension vector, then 
\begin{align*}
\dim \iss_\alpha \Z_2^{*n} &= (n-1)\dim \GL_m(\C) - (\dim \left(\prod_{i=1}^n \GL_{a_i^+}(\C) \times \GL_{a_i^-}(\C)\right)-1)\\
						   &= m^2(n-1) - (-1+\sum_{i=1}^n (a_i^+)^2 +(a_i^-)^2)\\
                           &= m^2(n-1) - (-1+nm^2 - 2\sum_{i=1}^n a_i^+a_i^-)\\
                           &= 2 (\sum_{i=1}^n a_i^+a_i^-)-(m^2-1).
\end{align*}
\end{proof}
\subsection{Smooth dimension vectors}
The next question we can ask ourselves is: when is $\iss_\alpha \Z_2^{*n} =  \rep_\alpha \Z_2^{*n}/\GL_\alpha(\C)$ a smooth variety? For this to investigate, we need to calculate local quivers in sums of one-dimensional representations and see if $\iss_m \Z_2^{*n}$ is smooth in these points.
\begin{theorem}\cite[Theorem 5.22]{LBBook}
Let $(Q,\alpha)$ be a symmetric quiver setting with $Q$ connected and containing no loops. Then $\iss_\alpha Q$ is smooth if and only if
\begin{itemize}
\item $Q$ is tree-like, which means that if we draw an edge between 2 vertices if there exists an arrow between them, we get a tree.
\item In every branching vertex, the dimension is equal to 1.
\item The quiver is built out of the following blocks:
\begin{center}
\begin{itemize}
\item \begin{tikzpicture}
\node[vertice,circle] (a) at ( -1, 0) {$n$};
\node[vertice,circle] (b) at ( 1, 0) {$m$};
\path[->,font=\scriptsize,>=angle 90]
    (a) edge[<->]  (b)
    ;
\end{tikzpicture},
\item \begin{tikzpicture}[
    implies/.style={double,double equal sign distance,implies-implies},
    dot/.style={shape=circle,fill=black,minimum size=2pt,
                inner sep=0pt,outer sep=2pt}]
\node[vertice,circle] (a) at ( -1, 0) {$1$};
\node[vertice,circle] (b) at ( 1, 0) {$n$};
\tikzset{every node/.style={fill=white},rectangle}
\draw[->,font=\scriptsize]
    (a) edge[implies] node[vertice,rectangle,anchor=center]{$k$}(b)
    ;
\end{tikzpicture}, $k\leq n$,
\item \begin{tikzpicture}
\node[vertice,circle] (a) at ( -2, 0) {$1$};
\node[vertice,circle] (b) at ( 2, 0) {$m$};
\node[vertice,circle] (c) at ( 0, 0) {$n$};
\path[->,font=\scriptsize,>=angle 90]
    (a) edge[<->]  (c)
    (b) edge[<->]  (c)
    ;
\end{tikzpicture},
\item \begin{tikzpicture}
\node[vertice,circle] (a) at ( -2, 0) {$n$};
\node[vertice,circle] (b) at ( 2, 0) {$m$};
\node[vertice,circle] (c) at ( 0, 0) {$2$};
\path[->,font=\scriptsize,>=angle 90]
    (a) edge[<->]  (c)
    (b) edge[<->]  (c)
    ;
\end{tikzpicture}
\end{itemize}
\end{center} 
\end{itemize}
\label{th:symmetric}
\end{theorem}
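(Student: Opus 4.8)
The plan is to reduce the smoothness question to a purely local, combinatorial one and then to run a reduction calculus on quiver settings. Since smoothness of $\iss_\alpha Q$ is a local property, the first step is to describe the \'etale-local model at each point. A point of $\iss_\alpha Q$ is the isomorphism class of a semisimple representation $M = \bigoplus_j S_j^{\oplus e_j}$, and by the Luna slice theorem (the same \'etale slice construction already used in this paper to pass from $\rep_\alpha \Z_2^{*n}$ to the one quiver $Q'_n$) a Zariski neighbourhood of $[M]$ is \'etale-isomorphic to a neighbourhood of the trivial representation in $\iss_{\alpha_M} Q_M$, where $(Q_M,\alpha_M)$ is the local quiver setting: the vertices are the distinct simples $S_j$, the dimension at $S_j$ is $e_j$, and the number of arrows $S_i \to S_j$ is $\dim \Ext^1(S_i,S_j)$. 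Because the quotient is a cone, smoothness of $\iss_{\alpha_M}Q_M$ at its trivial point is equivalent to smoothness everywhere, so $\iss_\alpha Q$ is smooth if and only if every admissible local quiver setting $(Q_M,\alpha_M)$ (with $(Q,\alpha)$ itself occurring as the generic one) has smooth quotient.

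Second, I would introduce the reduction moves on a loopless connected quiver setting that do not change whether the quotient is smooth. There are essentially three: removing a loop at a vertex of dimension $1$ (which splits off a free factor $\A^1$), and two vertex-removal moves that delete a vertex $v$ whenever its dimension is at least the total incoming or the total outgoing dimension, re-routing the arrows through $v$ by composition and altering the quotient only by an affine-space factor. The content here is to prove each move is smoothness-preserving: for the loop move this is immediate, and for the vertex moves one shows the invariant rings differ by a polynomial ring, using the first fundamental theorem description of $\C[\rep_\alpha Q]^{\GL_\alpha}$ by traces of oriented cycles. The upshot is that $(Q,\alpha)$ has smooth quotient if and only if it reduces, under iterated moves, to a quiver with no arrows, i.e. a disjoint union of vertices carrying only free variables.

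Third comes the classification, which splits into the two implications. For sufficiency, I would check directly that each of the four listed blocks reduces to an arrowless quiver: the block with a dimension-$1$ vertex joined by $k$ double-directed arrows to a dimension-$n$ vertex collapses, for $k \le n$, upon removing the dimension-$1$ vertex, and the two three-vertex blocks collapse by removing their central vertex; then, because the whole quiver is tree-like and every branching vertex carries dimension $1$, the reductions at distinct blocks do not interfere and can be performed in sequence, so the entire quiver reduces to arrowless form and $\iss_\alpha Q$ is smooth. For necessity I would argue contrapositively: if $Q$ is not tree-like it contains an oriented cycle, and the smallest such configurations (a symmetric two-cycle with both dimensions $\geq 2$, or a genuine polygon) reduce to a fixed finite list of \emph{singular} settings whose quotients one checks by hand to be singular (for instance two vertices of dimension $2$ joined by a double arrow); similarly a branching vertex of dimension $\geq 2$, or an edge violating a block inequality such as $k \le n$, produces one of these forbidden minimal settings as a local quiver. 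Since a smooth $\iss_\alpha Q$ forbids every singular local setting, all three conditions must hold.

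The main obstacle is the necessity direction. One must show that the reduction calculus is confluent, so that "reduces to arrowless form'' is a well-defined invariant independent of the order of moves, and, more seriously, that the list of four blocks is \emph{exhaustive}: any loopless symmetric tree-like quiver with branching dimension $1$ that is not assembled from these blocks must contain a local sub-setting that fails to reduce. This is the combinatorial heart of the argument, requiring the finite classification of minimal non-reducible (singular) symmetric settings together with a pigeonhole and case analysis on arrow multiplicities and vertex dimensions showing that exceeding the block inequalities immediately creates such an obstruction. The dimension bookkeeping at branching vertices, namely showing that dimension $\geq 2$ there always yields a local quiver with too many arrows relative to its dimension vector to be coregular, is where the estimates are most delicate.
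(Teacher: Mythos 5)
First, a point of comparison that matters here: the paper does not prove this statement at all. It is quoted verbatim, with citation, as \cite[Theorem 5.22]{LBBook}, and the underlying classification is Bocklandt's (his paper \cite{bocklandt2002smooth} is also in the bibliography). So there is no in-paper argument to measure you against; your proposal can only be compared with the literature proof, and there your architecture is indeed the same one: reduce smoothness of $\iss_\alpha Q$ to coregularity of all local quiver settings via the Luna slice/\'etale local structure, introduce reduction moves (vertex removal when the dimension at a vertex dominates the total in-flow or out-flow, loop removal at dimension-one vertices) that change the invariant ring only by a polynomial factor, and then classify which settings reduce to a harmless normal form.

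Measured against that proof, however, your proposal has a genuine gap: everything that makes the theorem a theorem is deferred rather than done. The sufficiency half is in reasonable shape (each block visibly reduces, and non-interference of reductions along a tree whose branching vertices have dimension one is provable). But the necessity half --- confluence of the reduction calculus, the finite list of minimal singular settings, and above all the exhaustiveness argument that \emph{any} connected loopless symmetric setting violating one of the three conditions produces a provably singular local setting --- is precisely the content of Bocklandt's case analysis on arrow multiplicities and dimension vectors, and your write-up names it ("the combinatorial heart", "where the estimates are most delicate") without carrying any of it out. Two smaller inaccuracies compound this. Your normal form "a quiver with no arrows" is not the right endpoint of the reduction calculus: reduced coregular settings may retain loops (e.g.\ one loop at a vertex of any dimension, or two loops at a dimension-two vertex), and getting this list right is part of the classification. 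And in the necessity sketch, passing from "the underlying graph has a cycle or a bad branching vertex" to "$\iss_\alpha Q$ has a singular point" requires exhibiting a semisimple representation whose local quiver is a forbidden setting, i.e.\ the degeneration machinery must be run explicitly; smoothness is tested at points of the quotient, not on subquivers directly. As it stands, your text is a correct proof plan that matches the known strategy, but not a proof.
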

Let us first find the possible full subquivers which are tree-like in $Q'_n$.
\begin{proposition}
There are $2n-1$ types of connected tree-like full subquivers in $Q'_n$ that can be subdivided in 4 cases, corresponding to
\begin{center}
\begin{enumerate}[I]
\item \begin{tikzpicture}
\node[vertice,circle] (a) at ( 0, 0) {};
\end{tikzpicture}
\item \begin{tikzpicture}[
    implies/.style={double,double equal sign distance,implies-implies},
    dot/.style={shape=circle,fill=black,minimum size=2pt,
                inner sep=0pt,outer sep=2pt}]
\node[vertice,circle] (a) at ( -1, 0) {};
\node[vertice,circle] (b) at (  1, 0) {};
\tikzset{every node/.style={fill=white},rectangle}
\draw[->,font=\scriptsize]
    (a) edge[implies] node[vertice,rectangle,anchor=center]{$k$}(b)
    ;
\end{tikzpicture}, $1\leq k \leq n-1$
\item \begin{tikzpicture}[
    implies/.style={double,double equal sign distance,implies-implies},
    dot/.style={shape=circle,fill=black,minimum size=2pt,
                inner sep=0pt,outer sep=2pt},scale=2]
\node[vertice,circle] (a) at ( -1, 0) {};
\node[vertice,circle] (b) at (  0, 0) {};
\node[vertice,circle] (c) at (  1, 0) {};
\tikzset{every node/.style={fill=white},rectangle}
\draw[->,font=\scriptsize]
    (a) edge[implies] node[vertice,rectangle,anchor=center]{$k$}(b)
    (c) edge[implies] node[vertice,rectangle,anchor=center]{$k-1$}(b)
    ;
\end{tikzpicture}, $2\leq k \leq n-1$
\item \begin{tikzpicture}[
    implies/.style={double,double equal sign distance,implies-implies},
    dot/.style={shape=circle,fill=black,minimum size=2pt,
                inner sep=0pt,outer sep=2pt},scale=2]
\node[vertice,circle] (a) at ( -1, 0) {};
\node[vertice,circle] (b) at (  0, 0) {};
\node[vertice,circle] (c) at (  1, 0) {};
\node[vertice,circle] (d) at (  2, 0) {};
\tikzset{every node/.style={fill=white},rectangle}
\draw[->,font=\scriptsize]
    (a) edge[<->] (b)
    (c) edge[implies] (b)
    (c) edge[<->] (d)  
    ;
\end{tikzpicture}.
\end{enumerate}
\end{center}
\label{prop:treelike}
\end{proposition}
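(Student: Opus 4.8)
The plan is to pass to the underlying graph of $Q'_n$. By Corollary \ref{cor:onequiver} the vertices are the elements of $2^N$, and two distinct vertices $A,B$ are joined (by $|A\Delta B|-1$ arrows back and forth) precisely when $|A\Delta B|\geq 2$; no arrow joins $A,B$ when $|A\Delta B|=1$. Since tree-likeness in the sense of Theorem \ref{th:symmetric} depends only on the existence of an arrow, a full subquiver on a vertex set $S\subseteq 2^N$ is tree-like exactly when the graph $\Gamma_S$ given by this adjacency is a tree. The decisive first step is to bound $|S|$ by a parity argument. Colour each vertex $A$ by the parity of $|A|$. If $A\neq B$ have the same parity, then $|A\Delta B|$ is a positive even integer, hence $\geq 2$, so $A$ and $B$ are always adjacent in $\Gamma_S$. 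Thus the vertices of a fixed parity span a complete subgraph of $\Gamma_S$, and since a tree has no triangle, each parity class contributes at most two vertices. Therefore $|S|\leq 4$, reducing everything to the cases $|S|=1,2,3,4$.

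The rest is a finite case analysis, in each case imposing $|E(\Gamma_S)|=|S|-1$ and reading off multiplicities from the numbers $|A\Delta B|-1$, normalising by the $B_n$-action when convenient. The case $|S|=1$ is case I. For $|S|=2$ the two vertices must be adjacent, so $|A\Delta B|$ ranges over $\{2,\ldots,n\}$ and the multiplicity $|A\Delta B|-1$ runs through $1,\ldots,n-1$, giving the $n-1$ types of case II. For $|S|=3$ the parity split is forced to be $\{2,1\}$; the same-parity pair is automatically an edge, so exactly one further edge is permitted and $\Gamma_S$ is a path $P-Q-R$ whose two leaves $P,R$ are non-adjacent, i.e. $P\Delta R=\{i\}$ is a singleton. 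Then $Q\Delta R=(Q\Delta P)\Delta\{i\}$, so $|Q\Delta R|=|Q\Delta P|\pm 1$ and the two multiplicities $|Q\Delta P|-1$, $|Q\Delta R|-1$ differ by exactly one. Writing the larger as $k$, the edge condition forces $k-1\geq 1$ and the bound $|Q\Delta P|\leq n$ forces $k\leq n-1$, yielding the $n-2$ types of case III; the value $k$ is realised by $Q=\emptyset$, $P=\{1,\ldots,k+1\}$, $R=\{1,\ldots,k\}$.

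The genuinely delicate case is $|S|=4$, where the split must be $\{2,2\}$ with even vertices $E_1,E_2$ and odd vertices $O_1,O_2$. The monochromatic pairs $E_1E_2$ and $O_1O_2$ are automatically edges, so a tree must use exactly one of the four cross-pairs as its third edge; after relabelling this is $E_1O_1$, giving the path $E_2-E_1-O_1-O_2$. The main obstacle is to show that the multiplicities are forced to be $(1,2,1)$. Setting the three non-adjacent cross-pairs $E_2\Delta O_2=\{p\}$, $E_2\Delta O_1=\{q\}$, $E_1\Delta O_2=\{r\}$, the edge conditions on $E_1E_2$ and $O_1O_2$ give $p\neq q$ and $p\neq r$, while $E_1\Delta O_1=\{p\}\Delta\{q\}\Delta\{r\}$ must have size $\geq 2$, which rules out $q=r$. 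Hence $p,q,r$ are distinct: the outer edges have distance $2$ (multiplicity $1$) and the central edge distance $3$ (multiplicity $2$), which is precisely case IV, and the construction $E_2=\emptyset$, $O_2=\{p\}$, $O_1=\{q\}$, $E_1=\{p,r\}$ shows it occurs as soon as $n\geq 3$. Summing the four cases gives $1+(n-1)+(n-2)+1=2n-1$ types, as claimed.
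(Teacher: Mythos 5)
Your proof is correct, and it takes a genuinely different route from the paper's. The paper works by $B_n$-normalization and incremental construction: it places a leaf of the tree at $\psi_\emptyset$, observes that any vertex not adjacent to $\psi_\emptyset$ must be a singleton $\psi_{\{i\}}$, grows the tree one vertex at a time to produce explicit representatives (e.g. the path $\psi_\emptyset,\psi_{\{2,3\}},\psi_{\{1\}},\psi_{\{2\}}$ for type IV), and only at the end rules out a fifth vertex because any further singleton would be adjacent to both $\psi_{\{1\}}$ and $\psi_{\{2\}}$ and hence close a cycle. Your decisive step is instead the parity observation: two distinct sets of equal parity have even, hence $\geq 2$, symmetric difference, so each parity class of vertices spans a clique in $Q'_n$; since a tree contains no triangle, each class contributes at most two vertices and $|S|\leq 4$ \emph{before} any case analysis. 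This buys transparency of exhaustiveness --- the classification reduces to a visibly finite check over $|S|=1,2,3,4$, with the multiplicity patterns $(k,k-1)$ in case III and $(1,2,1)$ in case IV forced by symmetric-difference identities such as $Q\Delta R=(Q\Delta P)\Delta\{i\}$ rather than by choosing representatives --- and it isolates the structural reason trees in $Q'_n$ are small, namely the bipartite clique structure inherited from the hypercube. What the paper's normalization buys is explicit $B_n$-representatives of each type, which is what gets used later: in the smoothness theorem the branching vertices of a type IV subquiver are invoked by name as $\psi_{\{1\}}$ and $\psi_{\{2,3\}}$. Both arguments draw the arrow counts from Corollary \ref{cor:onequiver}, and your tally $1+(n-1)+(n-2)+1=2n-1$ matches the proposition under the standing assumption $n\geq 3$ (which is also needed for your realizations of cases III and IV, as you note).
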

\begin{proof}
The connected tree-like full subquivers that contain $\leq 2$ vertices are trivially the ones from the first 2 cases, so we may assume that there are at least 3 vertices. Using the $B_n$-action, we may also assume that $\psi_\emptyset$ is a vertex which is connected to only one other vertex. Assume that the third vertex is not connected to $\psi_\emptyset$, then again using the $B_n$-action we may assume that this third vertex is $\psi_{\{1\}}$. Consider now the second vertex $\psi_A$ for some $A \in 2^N$. This vertex should be connected to $\psi_\emptyset$, so $|A| \geq 2$. There are now two options:
\begin{itemize}
\item There is no fourth vertex: then $A$ is an element of $2^N$ with $|A| \geq 2$ and $|A \Delta \{1\}|\geq 2$. There are two possibilities:
\begin{itemize}
\item $1 \in A$: then the number of arrows from $\psi_A$ to $\psi_{\{1\}}$ is one less than the number of arrows from $\psi_A$ to $\psi_\emptyset$, or
\item $1 \not\in A$: then the number of arrows from $\psi_A$ to $\psi_{\{1\}}$ is one more than the number of arrows from $\psi_A$ to $\psi_\emptyset$.
\end{itemize}
These two cases both correspond to the third option of the proposition. The number of arrows follow from the description of $Q'_n$ from corollary \ref{cor:onequiver}.
\item There is a fourth vertex: there are two options to consider:
\begin{itemize}
\item This fourth vertex is connected to $\psi_A$: as this vertex $\psi_B$ is not connected to $\psi_\emptyset$, one has $B = \{j\}$ for some $j \neq 1$, but then $\psi_{\{1\}}$ is connected to $\psi_B$. Therefore, this can not happen.
\item this fourth vertex corresponds to a set $B$ such that $|B| = 1$ and $|B \Delta \{1\}|\geq 2$. By the $B_n$-action we may assume that $B = \{2\}$, so $|B \Delta \{1\}|= 2$. Consequently, $A$ is a set so that $|A|\geq 2$, $|A \Delta \{1\}|\geq 2$ and $|A \Delta \{2\}| = 1$. This means that $2 \in A$, $|A| = 2$, $1 \not\in A$. By the $B_n$-action we may assume that $A = \{2,3\}$.
\end{itemize} 
\par Suppose now that there is a fifth vertex, as this vertex is not connected to $\psi_\emptyset$ this corresponds to $\{k\}$ for some $3\leq k \leq n$. But therefore, this vertex is always connected to $\psi_{\{1\}}$, which is impossible. This means that there can be no fifth vertex, so we are in the fourth case of the proposition. The number of arrows are again determined by corollary \ref{cor:onequiver}.
\end{itemize} 
\end{proof}
While this means that there are indeed dimension vectors for $Q'_n$ such that $\iss_\beta Q'_n$ is smooth and consequently, its image in $\iss_m \Z_2^{*n}$ is smooth, this does not mean that the component in which its image lies is smooth. In fact,
\begin{theorem}
If $\iss_\alpha \Z_2^{*n}$ is smooth, then $\alpha$ lies in the $B_n$-orbit of the dimension vector $(a,b;c,d;m,0;\ldots;m,0)$ for some $a,b,c,d,m\in \N$ and $a+b = m = c+d$.
\end{theorem}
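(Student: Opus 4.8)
The plan is to prove the contrapositive: if more than two of the pairs $(a_i^+,a_i^-)$ are \emph{mixed}, meaning $a_i^+>0$ and $a_i^->0$, then $\iss_\alpha \Z_2^{*n}$ is singular. Observe first that, since $a_i^++a_i^-=m$ for every $i$, a pair fails to be mixed exactly when it equals $(m,0)$ or $(0,m)$; hence the assertion ``$\alpha$ lies in the $B_n$-orbit of $(a,b;c,d;m,0;\ldots;m,0)$'' is precisely the statement that at most two pairs are mixed, because $B_n$ permutes the pairs and flips each $(a_i^+,a_i^-)$. By the \'etale local structure $\GL_\alpha(\C)\times^{\Stab(M)}\rep_\beta Q'_n \to \rep_\alpha \Z_2^{*n}$, the quotient $\iss_\alpha \Z_2^{*n}$ is smooth at the image of a semisimple point $M=\oplus_A \psi_A^{b_A}$ if and only if $\iss_\beta Q'_n$ is smooth at the origin, where $\beta=(b_A)_A$ and $\sum_A b_A\alpha_A=\alpha$. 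So it suffices to exhibit one decomposition for which the local quiver setting $(\Supp\beta,\beta)$ violates Theorem \ref{th:symmetric}. I will produce a $\beta$ whose support is \emph{not} tree-like, which contradicts smoothness irrespective of the multiplicities.

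Using the $B_n$-action I normalize so that the mixed pairs include $1,2,3$, while keeping the freedom (via the flips $\varphi_i$) to replace any $a_i^-$ by $m-a_i^-$. The key reduction is to pass to these three coordinates. Given any ``marginal'' $(c_S)_{S\subseteq\{1,2,3\}}$ with $c_S\ge 0$, $\sum_S c_S=m$ and $\sum_{S\ni i}c_S=a_i^-$ for $i=1,2,3$, I claim it extends to a genuine decomposition of $\alpha$: one distributes the $m$ ``tokens'' counted by the $c_S$ over the coordinates $4,\ldots,n$, and since for each such $i$ one only needs exactly $a_i^-$ of the $m$ tokens to contain $i$ --- an independent choice across the $i\ge 4$ --- such an extension always exists. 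Moreover, if two classes $S\ne S'$ with $c_S,c_{S'}>0$ satisfy $|S\Delta S'|\ge 2$, then any full characters $A,A'$ lying above them satisfy $|A\Delta A'|\ge|S\Delta S'|\ge 2$, so they are joined by an arrow in $Q'_n$ by Corollary \ref{cor:onequiver}. Consequently a \emph{triangle} among the marginal classes, i.e.\ three sets pairwise at Hamming distance $\ge 2$, lifts to three pairwise-connected vertices of $\Supp\beta$, giving a $3$-cycle in the underlying graph; the corresponding connected component of $\Supp\beta$ is then not tree-like.

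It remains to exhibit, for any mixed $a_1^-,a_2^-,a_3^-\in\{1,\ldots,m-1\}$, a valid marginal whose support contains such a triangle. The natural candidate is the triangle of singletons $\{1\},\{2\},\{3\}$ (pairwise distance $2$), filled with the extreme classes $\emptyset$ and $\{1,2,3\}$: setting $c_{\{i\}}=a_i^--z$, $c_{\{1,2,3\}}=z$ and $c_\emptyset=m-\sum_i a_i^-+2z$ one checks that all marginal equations hold, and that every entry is nonnegative and the three singletons occur with positive weight as soon as $z$ can be chosen with $\max\{0,\tfrac12(\sum_i a_i^- -m)\}\le z\le \min_i a_i^- -1$. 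When this range is empty I first apply the flips $\varphi_i$ to shift $\sum_{i\le 3}a_i^-$ into the admissible band, or else replace the singletons by another triangle of $Q'_3$ such as $\{1\},\{2\},\{1,2,3\}$; the finitely many remaining boundary configurations (small $m$, e.g.\ $m=2$, where the two complementary characters already force a double arrow carrying multiplicities $(1,1)$ and hence an inadmissible block) are checked directly. In each case $\Supp\beta$ contains a cycle, so by Theorem \ref{th:symmetric} the point is singular.

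Assembling these steps, three or more mixed pairs always yield a non-tree-like local quiver and therefore a singular point of $\iss_\alpha \Z_2^{*n}$; contrapositively, smoothness forces at most two mixed pairs, which is exactly the claimed $B_n$-orbit. I expect the main obstacle to be the combinatorial core of the third paragraph: verifying that a triangle-supporting marginal exists for \emph{every} mixed triple $(a_1^-,a_2^-,a_3^-)$. This is where the flips $\varphi_i$ must be deployed carefully to balance $\sum_{i\le 3}a_i^-$ against $m$, and where a short case analysis (or, equivalently, an inadmissible double-arrow block from a distance-$\ge 3$ pair with over-large multiplicities) is needed to settle the extreme values.
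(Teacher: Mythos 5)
Your strategy is genuinely different from the paper's, and its core is sound: where the paper first classifies the connected tree-like full subquivers of $Q'_n$ (proposition \ref{prop:treelike}) and then uses semigroup relations such as $m\alpha_\emptyset+\alpha_{\{1,\ldots,k+1\}}=(m-k)\alpha_\emptyset+\sum_{i}\alpha_{\{i\}}$ to move, inside a fixed component, to a visibly non-tree-like semisimple point, you go directly from the dimension vector $\alpha$ to a bad semisimple point via marginals on three mixed coordinates. Your reformulation (smoothness should force at most two mixed pairs), the \'etale reduction to the local quiver setting, the extension of a marginal $(c_S)_{S\subseteq\{1,2,3\}}$ to a genuine decomposition of $\alpha$, and the lifting of a triangle of marginal classes to a $3$-cycle in $\Supp(\beta)$ are all correct; combined with theorem \ref{th:symmetric} (applied to the connected component of $\Supp(\beta)$ containing the triangle, which has no loops since $\Ext^1(\psi_A,\psi_A)=0$), a triangle does force a singular point. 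Both proofs, in the end, exhibit configurations of characters pairwise at symmetric distance $\geq 2$; the difference is that yours is organized around $\alpha$ and the paper's around the possible local quiver types.

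The gap is in your final step, the existence of a triangle-supporting marginal. Normalize by flips so that $a_i^-\leq m/2$ for $i=1,2,3$ and sort $a_1^-\leq a_2^-\leq a_3^-$. Your interval for $z$ is nonempty precisely when $a_2^-+a_3^-\leq m-2+a_1^-$, and this fails exactly for the family $(a_1^-,a_2^-,a_3^-)=(1,m/2,m/2)$ with $m$ even. On this family none of your three announced repairs works: (i) flips do nothing, since flipping coordinate $1$ yields $(m-1,m/2,m/2)$, which is worse, and flipping coordinates $2,3$ fixes them; (ii) your fallback triangle $\{1\},\{2\},\{1,2,3\}$ is infeasible, because two of its members contain the coordinate $1$, whose marginal is $1$, so they cannot both carry positive weight; (iii) the exceptional family is infinite (one member for every even $m\geq 4$ and every $n\geq 3$), so it is not a matter of ``finitely many boundary configurations.'' The gap is fixable inside your framework: for $(1,m/2,m/2)$ with $m\geq 4$ take the marginal $c_{\{1,2,3\}}=1$, $c_{\{2\}}=c_{\{3\}}=m/2-1$, $c_\emptyset=1$, whose support contains the triangle $\{2\},\{3\},\{1,2,3\}$ (pairwise symmetric difference $2$); the remaining case $m=2$, i.e.\ $(1,1,1)$, is correctly disposed of by your double-arrow block argument. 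With this replacement the argument goes through, but as written your case analysis leaves an infinite family of dimension vectors uncovered.
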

\begin{proof}
Assume first that $\alpha = (a,b;c,d;m,0;\ldots;m,0)$ with $a+b=c+d=m$. Then every direct sum of one-dimensional simple representations in $\rep_\alpha \Z_2^{*n}$ is determined by a four-tuple $(x,y,z,v) \in \N^4$ such that $x+z=a$, $x+y = c$, $y+v=b$, $z+v=d$. The corresponding semisimple representation is then
$$
M_\alpha = \psi_\emptyset^x \oplus \psi_{\{1\}}^y \oplus \psi_{\{2\}}^z \oplus \psi_{\{1,2\}}^v.
$$
The local quiver setting in this representation is 
\begin{center}
\begin{tikzpicture}[scale=1]
   \node[vertice,circle] (a) at ( 1, 1) {$x$};
   \node[vertice,circle] (b) at ( 1, -1) {$y$};
   \node[vertice,circle] (c) at (  -1, 1) {$z$};
   \node[vertice,circle] (d) at (  -1, -1) {$v$};
%   \tikzset{every node/.style={fill=white}} 
   \path[->,font=\scriptsize,>=angle 90]
    (a) edge[<->] (d)
    (b) edge[<->] (c)
    ;
\end{tikzpicture}
\end{center}
This is a smooth quiver setting by theorem \ref{th:symmetric}. It is possible for some vertex to have dimension $0$. Therefore, for each $\alpha'$ in the $B_n$-orbit of $\alpha$, $\iss_\alpha \Z_2^{*n}$ is smooth.
\par Let $\beta$ now be a dimension vector of $Q'_n$, with $\Supp(\beta)$ one of the tree-like full subquivers of proposition \ref{prop:treelike}, excluding type I. For type II and III, we will assume that $k\geq 2$, otherwise we will be in the previous smooth case.
\begin{itemize}
\item $\Supp(\beta)$ is of type II: we may assume that $b_\emptyset \geq k$, $b_{1,\ldots,k+1}=1$ and for all other elements of $2^N$, $b_A = 0$. In the abelian semigroup $S(n)$ with generators $\alpha_A$ (corresponding to the one-dimensional representation $\psi_A$) for $A \in 2^N$, we have the equality
$$
m \alpha_\emptyset + \alpha_{\{1,\ldots,k+1\}} = (m-k) \alpha_\emptyset + \sum_{i=1}^{k+1} \alpha_{\{i\}}.
$$
Thus $\psi_\emptyset^m \oplus \psi_{\{1,\ldots,k+1\}}$ and $\psi_\emptyset^{m-k} \oplus \bigoplus_{i=1}^{k+1} \psi_{\{i\}}$ belong to the same component $\rep_\alpha\Z_2^{*n}$. The local quiver in $\psi_\emptyset^{m-k} \oplus \bigoplus_{i=1}^{k+1} \psi_{\{i\}}$ is not tree-like, thus $\iss_\alpha\Z_2^{*n}$ is not smooth.
\item $\Supp(\beta)$ is of type III: same proof as in type II, by disregarding the third vertex and focusing on the first 2 vertices.
\item $\Supp(\beta)$ is of type IV: we may assume that $b_{\emptyset}$, $b_{\{1\}}$, $b_{\{2\}}$ and $b_{\{2,3\}}$ are nonzero. As $\psi_{\{1\}}$ and $\psi_{\{2,3\}}$ are branching vertices, if $\iss_\beta Q_n'$ was smooth, then $b_{\{1\}} = b_{\{2,3\}} = 1$, contradicting the fact that one of these two values should be $\geq 2$.
\end{itemize}
\end{proof}
\begin{corollary}
There are no simple, smooth dimension vectors in $S(m)$ unless $m\leq 2$.
\end{corollary}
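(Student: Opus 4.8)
The strategy is to intersect the two classifications already in hand. By the preceding theorem, smoothness of $\iss_\alpha\Z_2^{*n}$ forces $\alpha$ into the $B_n$-orbit of a vector $(a,b;c,d;m,0;\ldots;m,0)$ with $a+b=c+d=m$; since simplicity is likewise a $B_n$-invariant property (Theorem \ref{th:simpledimgen}), I may test this normal form directly, and after applying the $\Z_2^n$-part of $B_n$ I may assume $a\geq b$ and $c\geq d$, so that $\max\{a,b\}=a$ and $\max\{c,d\}=c$.

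Substituting into the simple criterion is then routine: each of the $n-2$ trailing pairs $(m,0)$ contributes $m$ to $\sum_{i=1}^n\max\{a_i^+,a_i^-\}$, so the bound $\sum_i\max\{a_i^+,a_i^-\}\leq m(n-1)$ collapses to $a+c\leq m$. As $a+b=c+d=m$ with $a\geq b$ and $c\geq d$, one has $a,c\geq\lceil m/2\rceil$, hence $a+c\geq 2\lceil m/2\rceil$. For odd $m$ this already exceeds $m$, so no smooth vector of odd dimension is simple; for even $m=2k$ the inequality can hold only with equality, which pins down $a=b=c=d=k$ and places $\alpha$ in the $B_n$-orbit of $(k,k;k,k;2k,0;\ldots;2k,0)$.

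The crux is to recognize this balanced vector: up to permutation of the pairs it is exactly $(2k,0;\ldots;2k,0;k,k;k,k)$, i.e. the orbit that Theorem \ref{th:simpledimgen} excludes from simplicity precisely when $k\neq 1$. Hence for $m=2k\geq 4$ the sole candidate left by the numerical test is not in fact simple, and together with the odd case this rules out any simultaneously simple and smooth vector of dimension $m\geq 3$. I expect this identification of the boundary vector with the stated exceptional orbit to be the only delicate point; everything else is a direct substitution. Sharpness of the bound is then immediate: for $m=2$ the exceptional clause (which requires $k\neq 1$) does not fire, so $(1,1;1,1;2,0;\ldots;2,0)$ is both simple and smooth, while for $m=1$ every dimension vector is a character $\psi_A$, trivially simple with $\iss_\alpha\Z_2^{*n}$ a point. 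Thus simple smooth dimension vectors exist exactly for $m\leq 2$.
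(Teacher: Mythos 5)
Your proof is correct and is exactly the argument the paper intends (the corollary is stated without proof, as an immediate consequence of the preceding smoothness theorem combined with Theorem \ref{th:simpledimgen}): smoothness forces the normal form $(a,b;c,d;m,0;\ldots;m,0)$, the simplicity inequality collapses to $a+c\leq m$, and the only vector surviving this bound is the balanced one $(k,k;k,k;2k,0;\ldots;2k,0)$, which is precisely the exceptional orbit that fails to be simple for $k\neq 1$. Your handling of the edge cases ($m$ odd, $m=1$ treated directly rather than via the criterion, and sharpness at $m=2$) is also consistent with the paper's conventions, so there is nothing to correct.
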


\section{Explicit computational results}
\subsection{Two-dimensional representations}
We can completely determine the components and their dimensions in $\rep_2\Z_2^{*n}$ and $\iss_2\Z_2^{*n}$ by hand.
\begin{theorem}
The representation variety $\rep_2\Z_2^{*n}$ consists of $3^n$ components. For $0 \leq k \leq n$, there are $E_{k,n} = 2^{n-k} \binom{n}{k}$ components of dimension $2k$, indexed by $A \in 2^N$, $|A|=k$ and $B \in 2^{N\setminus A}$.
\par Let $V_{A,B,n}$ be such a component with $|A|=k$. Then the GIT-quotient $W_{A,B,n}=V_{A,B,n}/\GL_2(\C)$ is of dimension $2k-3$, except if $k = 0,1$, in which case the dimension is 0.
\par The number of singularities in $W_{A,B,n}$ is equal to $2^{k-1}$ if $k\geq 3$. These singularities correspond to the sum of two one-dimensional representations $\psi_C,\psi_D$ with $|C\Delta D| = k$, with corresponding local quiver setting
\begin{center}
\item \begin{tikzpicture}[
    implies/.style={double,double equal sign distance,implies-implies},
    dot/.style={shape=circle,fill=black,minimum size=2pt,
                inner sep=0pt,outer sep=2pt}]
\node[vertice,circle] (a) at ( -1.5, 0) {$1$};
\node[vertice,circle] (b) at (  1.5, 0) {$1$};
\tikzset{every node/.style={fill=white},rectangle}
\draw[->,font=\scriptsize]
    (a) edge[implies] node[vertice,rectangle,anchor=center]{$k-1$}(b)
    ;
\end{tikzpicture}. 
\end{center}
If $k =0,1,2$, there are no singularities.
\end{theorem}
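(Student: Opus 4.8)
The plan is to reduce everything to the explicit description of $\rep_\alpha \Z_2^{*n}$ for $m=2$ and then to read off the local structure from the one quiver $Q'_n$.

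\textbf{Components and indexing.} The count $3^n=(m+1)^n$ is immediate from Theorem \ref{th:numberofcomponent} with $m=2$. A component is a dimension vector $\alpha=(a_i^+,a_i^-)_{i=1}^n$ with $a_i^++a_i^-=2$, so each $a_i^+\in\{0,1,2\}$. First I would set $A=\{i:a_i^+=1\}$ and $B=\{i:a_i^+=2\}$, which gives a bijection between components and pairs $(A,B)$ with $B\subseteq N\setminus A$; counting those with $|A|=k$ yields $\binom{n}{k}2^{n-k}=E_{k,n}$. Because $\Z_2^{*n}$ is a free product, a representation is simply an independent choice of one involution $\phi(e_i)$ per generator, so $\rep_\alpha \Z_2^{*n}\cong\prod_{i=1}^n\mathrm{Inv}(a_i^+,a_i^-)$, the product of the varieties of involutions of prescribed signature. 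For signature $(2,0)$ or $(0,2)$ this is a point, while $\mathrm{Inv}(1,1)\cong\GL_2(\C)/(\GL_1(\C)\times\GL_1(\C))$ is irreducible of dimension $2$. Hence $V_{A,B,n}$ is irreducible of dimension $\sum_i 2a_i^+a_i^-=2k$.

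\textbf{Dimension of the quotient.} For $W_{A,B,n}$ I would invoke Theorem \ref{th:simpledimgen}: here $\sum_i\max\{a_i^+,a_i^-\}=2n-k$, which is $\leq 2(n-1)$ exactly when $k\geq 2$, and the exceptional vector of that theorem requires $m=2k$ with $k\neq 1$, hence cannot occur for $m=2$. So for $k\geq 2$ the generic representation is simple, its orbit has dimension $\dim\PGL_2(\C)=3$, and $\dim W_{A,B,n}=2k-3$. For $k\in\{0,1\}$ a direct check shows the generic orbit (stabilizer a maximal torus) is already dense, so $W_{A,B,n}$ is a point.

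\textbf{Singular locus.} Points of $W_{A,B,n}$ are closed orbits, i.e.\ semisimple representations; since $m=2$ these are either simple or a sum $\psi_C\oplus\psi_D$ of two characters. Simple points are smooth: the local quiver there is a single vertex with loops, and $\GL_1(\C)$ acts trivially on loops, so the local model is affine space. For the decomposable points I would match dimension vectors, $\alpha_C+\alpha_D=\alpha$, which forces $i\in C\Delta D$ exactly when $i\in A$, i.e.\ $C\Delta D=A$, while $C\cap(N\setminus A)$ is rigidly prescribed by $B$. Using the \'etale local model $\GL_\alpha(\C)\times^{\Stab(M)}\rep_\beta Q'_n$ together with the Ext computation $\dim\Ext^1(\psi_C,\psi_D)=|C\Delta D|-1=k-1$ from Corollary \ref{cor:onequiver}, the local quiver at such a point is the two-vertex symmetric quiver with dimension $(1,1)$ and $k-1$ arrows each way. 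By Theorem \ref{th:symmetric} (equivalently, its quotient is the determinantal variety of rank-$\leq 1$ matrices, the affine cone over a Segre variety, of dimension $2(k-1)-1=2k-3$) this is smooth precisely when $k-1\leq 1$; hence these points are singular exactly for $k\geq 3$ and smooth for $k\leq 2$.

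\textbf{Counting and the main obstacle.} Finally, for $k\geq 3$ the decomposable closed orbits in $V_{A,B,n}$ are parametrized by the unordered splittings of $A$ into the complementary pair $(C\cap A,\,D\cap A)$, and since $C\Delta D=A\neq\emptyset$ forces $C\neq D$, the $2^k$ ordered splittings pair up into $2^k/2=2^{k-1}$ distinct singular points, which exhaust the singular locus. The hard part will be the two geometric steps: correctly identifying the \'etale local quiver at $\psi_C\oplus\psi_D$ and extracting its smoothness threshold from Theorem \ref{th:symmetric}, and verifying that the decomposable orbits are the \emph{entire} singular locus, which rests on the smoothness of the simple points established above. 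The component count, the indexing, and the dimension formulas are comparatively routine.
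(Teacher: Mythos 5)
Your proposal is correct and follows the same overall strategy as the paper's proof: decompose each component as a product of conjugation orbits of involutions of fixed signature (giving $\dim V_{A,B,n}=2k$ and the count $2^{n-k}\binom{n}{k}$), reduce the singularity question to the decomposable semisimple points $\psi_C\oplus\psi_D$, identify the local quiver there as two vertices of dimension $1$ with $k-1$ arrows back and forth, and count the $2^{k}$ ordered splittings of $A$ modulo the swap to get $2^{k-1}$ singular points. The one step where you take a different route is the existence of simple representations for $k\geq 2$ (needed for $\dim W_{A,B,n}=2k-3$): you invoke the general classification of simple dimension vectors, Theorem \ref{th:simpledimgen}, whereas the paper argues more elementarily, projecting $V_{A,B,n}$ onto two coordinates $i,j\in A$ to obtain a surjection onto $V_{\{1,2\},\emptyset,2}$, which has an open set of simples by the tame case of Section \ref{sec:tame}. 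Both arguments work, but note that Theorem \ref{th:simpledimgen} sits in a subsection with the standing hypothesis $n\geq 3$, so your route strictly speaking needs $n=2$ (where only $k\leq 2$ occurs) to be covered separately by Section \ref{sec:tame}; the paper's projection argument avoids this issue and keeps the proof self-contained, while yours is shorter once the general theorem is available. Your additional observations --- smoothness at simple points via the one-vertex local quiver (the paper instead gets this from the étale $\PGL_2(\C)$-fibration), and the identification of the local model as the cone over a Segre variety, smooth precisely when $k\leq 2$ by Theorem \ref{th:symmetric} --- are correct and consistent with the paper.
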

\begin{proof}
In the matrix ring $\M_2(\C)$, denote with $\mathcal{O}_0,\mathcal{O}_1,\mathcal{O}_2$ the $\GL_2(\C)$-orbits of respectively
$$
\begin{bmatrix}
1 & 0 \\ 0 & -1
\end{bmatrix},
\begin{bmatrix}
1 & 0 \\ 0 & 1
\end{bmatrix},
\begin{bmatrix}
-1 & 0 \\ 0 & -1
\end{bmatrix}.
$$
Then any component of $\rep_2\Z_2^{*n}$ is a product of $n$ choices of $\mathcal{O}_0,\mathcal{O}_1$ and $\mathcal{O}_2$. This gives $3^n$ components. The dimension of such a component depends on the number of times $\mathcal{O}_0$ is chosen, for $\dim(\mathcal{O}_1)=\dim(\mathcal{O}_2)=0$. The dimension of $\mathcal{O}_0$ is equal to $2$, as can be easily seen using the formula
$$
\dim \GL_2(\C) = \dim \mathcal{O}_0 + \dim \Stab_{\GL_2(\C)}\begin{bmatrix}
1 & 0 \\ 0 & -1
\end{bmatrix}= \dim \mathcal{O}_0 + \dim \C^* \times \C^*.
$$
\par Let $s$ be the number of components of dimension $2k$. For such a component, one first chooses a subset $A \subset N$ consisting of the factors equal to $\mathcal{O}_0$ and then choosing a subset $B \subset N \setminus A$, consisting of the factors equal to $\mathcal{O}_2$. The other factors are equal to $\mathcal{O}_1$. This shows that $s = 2^{n-k} \binom{n}{k}$.
\par Choose a component $V_{A,B,n}$ with $|A|=k \geq 2$, let $i,j \in A$, $i \neq j$. Then there is a surjective restriction map
\begin{tikzcd}
V_{A,B,n} \arrow[r, two heads] & V_{\{1,2\},\emptyset,2}
\end{tikzcd}
by projecting on the $i$th and $j$th component. As $V_{\{1,2\},\emptyset,2}$ contains an open subset of simple representations, it follows that $V_{A,B,n}$ has an open subset of simple representations.
\par This means that on an open subset $U \subset W_{A,B,n}$,  $V_{A,B,n}$ is a principal $\PGL_2(\C)$-fibration in the \'etale topology, leading to $\dim W_{A,B,n} = 2k-3$.
\par From this it follows that the possible singularities of $\dim W_{A,B,n}$ lie in the semisimple representations. Semisimple representations are determined by two characters $\psi_C, \psi_D$. One of the requirements that $\psi_C \oplus \psi_D$ lies in $V_{A,B,n}$ is that $|C\Delta D| = k$, from which the claimed local quiver type follows. From this it also follows that $W_{A,B,n}$ does not have singularities if $k \leq 2$, as 
\begin{center}
\begin{tikzpicture}
\node[vertice,circle] (a) at ( -1, 0) {1};
\node[vertice,circle] (b) at ( 1, 0) {1};
\draw[->,font=\scriptsize]
    (a) edge[<->]  (b)
    ;
\end{tikzpicture}
\end{center}
is a smooth local quiver setting.
\par The condition that $\psi_C \oplus \psi_D \in V_{A,B,N}$ can be descibed as
$$
\forall j \in N:
\begin{cases}
\psi_C(e_j) = \pm 1 \text{ and } \psi_D(e_j) = \mp 1 & j \in A,\\
\psi_C(e_j) = \psi_D(e_j) = -1	                     & j \in B,\\
\psi_C(e_j) = \psi_D(e_j) =  1	                     & j \in N\setminus (A \cup B).
\end{cases}
$$
Consequently, there are $2^k$ possible semisimple representations $\psi_C \oplus \psi_D \in V_{A,B,n}$, but as $\psi_C \oplus \psi_D$ is conjugated to $\psi_D \oplus \psi_C$, it follows that there are $2^{k-1}$ singularities in $W_{A,B,n}$ if $k \geq 3$.
\end{proof}
In terms of the $n$-dimensional hypercube $2^N$, we see that the irreducible components of $\rep_2 \Z_2^{*n}$ are in bijection with the subcubes of $2^N$. Every component of $\rep_2 \Z_2^{*n}$ contains simple representations, except for the components corresponding to points and edges in $2^N$.
\begin{example}
The 2-dimensional representations of $\C(\Z_2^{*3})$ correspond to the representations of a Clifford algebra $\mathcal{C}$ over $\C[X_{1,2},X_{1,3},X_{2,3}]$. The associated quadratic form over $\C[X_{1,2},X_{1,3},X_{2,3}]$ is determined by the matrix
$$
\begin{bmatrix}
1 & X_{1,2} & X_{1,3}\\ X_{1,2} & 1 & X_{2,3} \\ X_{1,3} & X_{2,3} & 1
\end{bmatrix}.
$$
Let $\rho$ be the natural algebra map
$$
\begin{tikzcd}
\C(\Z_2^{*3})  \arrow["\rho", r]&\mathcal{C}
\end{tikzcd}.
$$
The center of $\mathcal{C}$ by $\cite{le1987trace}$ is equal to
$$
R=\C[X_{1,2},X_{1,3},X_{2,3},g]/(g^2-(1-(X_{1,2}^2+X_{1,3}^2+X_{2,3}^2)+2X_{1,2}X_{1,3}X_{2,3})),
$$
with $g$ a scalar multiple of $\rho(\sum_{\sigma \in S_3} \varepsilon(\sigma) x_{\sigma(1)}x_{\sigma(2)}x_{\sigma(3)})$.
\par We have $\Spec(R)=\rep_{(1,1,1,1,1,1)} \Z_2^{*3}/\GL_2(\C)$ and one has
$$\rep_{(1,1,1,1,1,1)} \Z_2^{*3} = \trep_2 \mathcal{C}_3.$$
As expected, $\Spec(R(3))$ has 4 singularities, in the points $$(1,1,1,0),(1,-1,-1,0),(-1,1,-1,0),(-1,-1,1,0),$$ which are all of conifold type ($3_{con}$ in the classification of \cite[proposition 5.13]{LBBook}). The 4 singularities form a $S_4= B_3/(\Z_2)$-orbit.
\end{example}
 \subsection{The dimension vector $(m-1,1)_{i=1}^n$}
An easy application of theorem \ref{prop:simplocal} is the following:
\begin{proposition}
The dimension vector $\alpha(n,m) = (m-1,1)_{i=1}^n$ is simple if and only if $m \leq n$. In this case, $\dim \iss_{\alpha(n,m)} \Z_2^{*n} = (m-1)(2n-m-1)$.
\label{prop:alphamsimpel}
\end{proposition}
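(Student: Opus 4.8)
The plan is to read off both claims directly from the general classification already established, so that no new geometry is needed. First I would note that $\alpha(n,m)$ is already in normal form: for $m\geq 2$ every pair equals $(m-1,1)$ with $m-1\geq 1$, so the ordering condition \ref{al:condalpha} holds automatically and Theorem \ref{th:simpledimgen} applies without any preliminary reduction under the $B_n$-action.

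The first step is the simplicity criterion. Since $\max\{a_i^+,a_i^-\}=\max\{m-1,1\}=m-1$ for every $i$, the defining inequality of Theorem \ref{th:simpledimgen} becomes
$$
\sum_{i=1}^n \max\{a_i^+,a_i^-\}=n(m-1)\leq m(n-1).
$$
Expanding gives $nm-n\leq nm-m$, which is equivalent to $m\leq n$. This yields the asserted characterization, provided $\alpha(n,m)$ avoids the single exceptional $B_n$-orbit of $(2k,0;\ldots;2k,0;k,k;k,k)$ with $m=2k$, $k\neq 1$.

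Ruling out that exception is the one point I expect to require genuine care, so I would treat it explicitly. Because the $B_n$-action only permutes the pairs $(a_i^+,a_i^-)$ and may swap the two entries within each pair, the multiset of unordered pairs $\{a_i^+,a_i^-\}$ is a $B_n$-invariant. For $\alpha(n,m)$ this multiset consists of $n$ copies of $\{m-1,1\}$, whereas for the exceptional vector it consists of $n-2$ copies of $\{2k,0\}$ together with two copies of $\{k,k\}$ with $k\neq 1$. Matching the two multisets would force $\{m-1,1\}=\{k,k\}$, hence $k=m-1=1$, contradicting $k\neq 1$; thus $\alpha(n,m)$ never lies in the exceptional orbit and the criterion $m\leq n$ is unconditional.

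Finally, when $m\leq n$ the dimension follows by substitution into Proposition \ref{dimsimple}. Here $a_i^+a_i^-=(m-1)\cdot 1=m-1$, so
$$
\dim\iss_{\alpha(n,m)}\Z_2^{*n}=2\sum_{i=1}^n a_i^+a_i^- -(m^2-1)=2n(m-1)-(m-1)(m+1)=(m-1)(2n-m-1),
$$
where I have used $m^2-1=(m-1)(m+1)$ to factor out $m-1$. This matches the claimed value. The whole argument is routine once the classification theorems are in hand; the sole subtlety is the invariance argument that excludes the exceptional orbit.
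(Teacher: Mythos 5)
Your proof is correct and takes essentially the same route as the paper's: apply Theorem \ref{th:simpledimgen} to get $n(m-1)\leq m(n-1)\Leftrightarrow m\leq n$, then substitute into Proposition \ref{dimsimple} and factor $m^2-1=(m-1)(m+1)$. The only addition is your explicit exclusion of the exceptional $B_n$-orbit via the invariant multiset of unordered pairs; the paper omits this check as immediate (no pair of $\alpha(n,m)$ can equal $\{2k,0\}$ or $\{k,k\}$ with $k\neq 1$), but making it explicit is a harmless refinement, not a different argument.
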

\begin{proof}
Theorem \ref{th:simpledimgen} applied for $\alpha(n,m)$ becomes
$$
n(m-1) \leq m(n-1) \Leftrightarrow m \leq n.
$$
Consequently, proposition \ref{dimsimple} applied to $\alpha(n,m)$ becomes
$$
\dim \iss_{\alpha(n,m)} \Z_2^{*n} = 2n(m-1)-(m^2-1) = (m-1)(2n-m-1).
$$
\end{proof}
For $A \in 2^N$, we will set
$$
\alpha(n,m,A) = (a_i^+,a_i^-)_{i=1}^n, (a_i^+,a_i^-) =
\begin{cases}
(m-1,1) &\text{ if } i \in A,\\
(m,0)   &\text{ if } i\not\in A.
\end{cases}
$$
Consequently, $\alpha(n,m,N) = \alpha(n,m)$ and $\alpha(n,m,\emptyset) = (m,0)_{i=1}^{n}$. In addition, the vector $\alpha(n,m,A)$ determines the representations that have the subgroup $\langle e_j : j\not\in A \rangle \subset \Z_2^{*n}$ in its kernel. We will set $\Z_2^{*n}(A) = \langle e_j : j\in A\rangle \cong \Z_2^{*|A|}$, then 
$$
\rep_{\alpha(n,m,A)} \Z_2^{*n} \cong \rep_{\alpha(|A|,m)}\Z_2^{*|A|}
$$
by way of the restriction morphism $V \mapsto V|_{\Z_2^{*n}(A)}$.
\par We will classify all local quiver settings in $\dim \iss_{\alpha(n,m)}\Z_2^{*n}$ for all $m \leq n$, which will be a generalization of theorem \ref{theorem:mainintro}.
\begin{theorem}
The possible local quiver settings in $\dim \iss_{\alpha(n,m)}\Z_2^{*n}$ for $m \leq n$ are determined by the following data:
\begin{itemize}
\item a partition $\mathbf{A}=\{A_i\}_{i=1}^l$ of $N$, and
\item a positive $l$-tuple $\mathbf{k}=(k_i)_{i=1}^l \in \N^l$ satisfying $1\leq k_i \leq |A_i|$ for all $1\leq i \leq l$ and $|\mathbf{k}| = \sum_{i=1}^l k_i \leq m$.
\end{itemize}
A local quiver setting $(\mathbf{A},\mathbf{k})$ degenerates to $(\mathbf{A'},\mathbf{k'})$ if and only if
\begin{itemize}
\item $\mathbf{A'}$ is a refinement of $\mathbf{A}$, and
\item for all $1\leq i\leq l$, if $A_i = \bigsqcup_{j=1}^{l_i} A'_{i_j}$, then $k_i \geq \sum_{j=1}^{l_i} k'_{i_j}$. 
\end{itemize}
The local quiver setting corresponding to $(\mathbf{A},\mathbf{k})$ is the following:
\begin{center} 
\begin{tikzpicture}[
    implies/.style={double,double equal sign distance,implies-implies},
    loopy/.style={double,double equal sign distance,-implies},
    dot/.style={shape=circle,fill=black,minimum size=2pt,
    state/.style={circle, draw, minimum size=2cm},inner sep=0pt,outer sep=2pt},scale=5]
\node[state] (Aj) at ( {cos(60)}, {sin(60)}) {$1$};
\node[state] (lege) at (  0, 0) {$m-|\mathbf{k}|$};
\node[state] (Ai) at ( {cos(120)}, {sin(120)}) {$1$};
\node (emptyone) at ({cos(150)},{sin(150)}) {};
\node (emptyoneorigin) at ({cos(30)},{sin(30)}) {};
\tikzset{every node/.style={fill=white},rectangle}
\draw[->,font=\scriptsize]
    (Ai) edge[implies]node[vertice,rectangle,anchor=center]{$|A_i|k_j+|A_j|k_i-k_ik_j$} (Aj)
    (Ai) edge[loopy,loop above,out=135,in=45,looseness=8]node[vertice,rectangle,anchor=center]{$(k_i-1)(2|A_i|-k_i-1)$} (Ai)        
    (Aj) edge[loopy,loop above,out=135,in=45,looseness=8]node[vertice,rectangle,anchor=center]{$(k_j-1)(2|A_j|-k_j-1)$} (Aj)        
    (Ai) edge[implies]node[vertice,rectangle,rotate=-60]{$|A_i|-k_i$} (lege) 
    (Aj) edge[implies]node[vertice,rectangle,rotate=60]{$|A_j|-k_j$} (lege) 
    (emptyone) edge[dashed,<->,bend left] (Ai)       
    (emptyoneorigin) edge[dashed,<->,bend right] (Aj) 
    ;
\end{tikzpicture}
\end{center}
\label{theorem:main}
\end{theorem}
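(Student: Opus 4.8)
\textbf{The plan} is to reduce everything to the local-quiver machinery that has already been set up, and then to carry out a careful bookkeeping of how semisimple representations in $\rep_{\alpha(n,m)}\Z_2^{*n}$ decompose into one-dimensional characters. The starting point is the étale slice morphism recalled in the excerpt: a semisimple representation $M=\bigoplus_{A\in 2^N}\psi_A^{b_A}$ in $\rep_{\alpha(n,m)}\Z_2^{*n}$ has a local quiver setting given by the full subquiver of $Q'_n$ on $\Supp(\beta)$, where $\beta=(b_A)$, together with the dimension vector $\beta$. The entire theorem is therefore a statement about which dimension vectors $\beta$ for $Q'_n$ actually occur as decompositions of points in this particular component, and what the resulting local quivers look like. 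So first I would translate the constraint ``$M\in\rep_{\alpha(n,m)}\Z_2^{*n}$'' into the semigroup language: using Proposition on $S(n)$, the condition is $\sum_{A}b_A\,\alpha_A=\alpha(n,m)=(m-1,1)_{i=1}^n$, i.e. $\sum_A b_A=m$ (each character contributes $1$ to the total multiplicity in every coordinate pair) together with $\sum_{A\ni i}b_A=1$ for every $i\in N$ (the ``$-1$ eigenvalue'' count in position $i$ equals $a_i^-=1$).

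\textbf{The combinatorial core} is to solve exactly these equations. The condition $\sum_{A\ni i}b_A=1$ for each $i$ says that the multisets indexed by characters, when we record which coordinate each $-1$ sits in, partition $N$. Concretely, writing $\mathbf{A}=\{A_i\}$ for the supports of the nontrivial characters occurring, the sets $A_i$ must be pairwise disjoint and cover exactly those coordinates carrying a $-1$; since every coordinate must carry precisely one $-1$, the $A_i$ form a partition of $N$. The tuple $\mathbf{k}=(k_i)$ then records, for each block $A_i$, how many distinct characters $\psi_{A_{i_1}},\dots$ refining-or-equalling $A_i$ are used — more precisely I would show that the freedom in choosing the $b_A$ subject to the two linear constraints is parametrized exactly by a partition $\mathbf{A}$ of $N$ and an integer $k_i$ with $1\le k_i\le |A_i|$ per block, with $\sum_i k_i=|\mathbf{k}|\le m$ (the $\le m$ coming from the multiplicity $b_\emptyset=m-|\mathbf{k}|\ge 0$ of the trivial character absorbing the slack). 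This is the bijective content of the first bullet list, and it is essentially the same accounting that produced $M_\alpha$ in the earlier proposition; I would point to that construction as the prototype and generalize it.

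\textbf{Computing the local quiver} is then a direct application of Corollary \ref{cor:onequiver} and Proposition \ref{dimsimple}, applied blockwise. The number of arrows between two occurring characters $\psi_C,\psi_D$ is $|C\Delta D|-1$, and loops at a vertex of dimension $>1$ are governed by the symmetric quiver formula: a single character of support contained in block $A_i$ with multiplicity $k_i$ contributes, via the dimension-of-$\iss$ computation restricted to $\Z_2^{*|A_i|}(A_i)$, exactly $(k_i-1)(2|A_i|-k_i-1)$ loops — this is Proposition \ref{dimsimple} read for $\alpha(|A_i|,k_i)$, i.e. $2\sum a^+a^- -(k_i^2-1)$ with the right substitutions. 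The cross terms $|A_i|k_j+|A_j|k_i-k_ik_j$ and the arrows $|A_i|-k_i$ to the central vertex $\psi_\emptyset$ of dimension $m-|\mathbf{k}|$ are obtained the same way from $|C\Delta D|$ for $C$ supported in $A_i$ and $D$ in $A_j$ (disjoint, so the symmetric differences add) or $D=\emptyset$. I would verify these three arrow-counts by a short explicit Euler-form computation and present them as lemmas feeding the picture.

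\textbf{The degeneration order} is the last piece and, I expect, the main obstacle to state cleanly. Degenerations of semisimple representations correspond to moving to more refined direct-sum decompositions lying in the same component $\rep_{\alpha(n,m)}\Z_2^{*n}$; in semigroup terms $(\mathbf{A},\mathbf{k})$ degenerates to $(\mathbf{A}',\mathbf{k}')$ precisely when the multiset of characters of the second is obtainable from that of the first by the relations $e_Ce_D=e_{C\cup D}e_{C\cap D}$, i.e. by splitting characters. I would argue that such a splitting refines each block $A_i$ into sub-blocks $A'_{i_j}$ and redistributes its ``mass'' $k_i$ among them, which forces $\sum_j k'_{i_j}\le k_i$ (mass cannot be created, and the trivial character can absorb any deficit since $m-|\mathbf{k}'|\ge m-|\mathbf{k}|$). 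The converse — that every such refinement-plus-inequality is actually realized by a chain of the semigroup relations, hence by an actual orbit degeneration — is the delicate direction: one must exhibit, given the numerical data, an explicit sequence of splittings of the characters of $(\mathbf{A},\mathbf{k})$ landing on those of $(\mathbf{A}',\mathbf{k}')$, and check it stays inside the component. I would handle this by reducing to two elementary moves (refine one block by one element; transfer one unit of $k$ to the trivial character) and showing both are realizable and that they generate the stated partial order. Verifying that these two moves suffice, and that the resulting order is exactly the product of the refinement order with the componentwise inequalities, is where the real work lies.
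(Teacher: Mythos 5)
There is a genuine gap, and it sits at the very foundation of your argument: you assume that every semisimple representation in $\rep_{\alpha(n,m)}\Z_2^{*n}$ decomposes as a direct sum of one-dimensional characters, $M=\bigoplus_{A\in 2^N}\psi_A^{b_A}$, and that the theorem is therefore about dimension vectors $\beta$ for the one quiver $Q'_n$. This is false: since $\alpha(n,m)$ is a simple dimension vector for $m\leq n$ (Proposition \ref{prop:alphamsimpel}), the component contains simple representations of every dimension $k$ with $1\leq k\leq n$, and a general point of $\iss_{\alpha(n,m)}\Z_2^{*n}$ has a semisimple representative with \emph{higher-dimensional} simple summands. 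In the theorem, $k_i$ is the dimension of the simple summand $V_i\in\rep_{\alpha(n,k_i,A_i)}\Z_2^{*n}$, not a multiplicity of characters. Your own constraint equations expose the problem: $\sum_{A\ni i}b_A=1$ for every $i$ forces the nontrivial characters occurring in a character sum to have pairwise disjoint supports partitioning $N$, each with multiplicity exactly $1$. So character sums realize only the settings with all $k_i=1$ (the deepest strata), and the claimed "freedom $1\leq k_i\leq|A_i|$ per block" simply does not exist among solutions of your linear system; every local quiver setting of the theorem with some $k_i\geq 2$ is invisible to your parametrization. The paper instead decomposes $M=\bigoplus_{i=0}^l V_i^{e_i}$ into simples of arbitrary dimension, observes that $e_i=1$ is forced for $V_i\neq\psi_\emptyset$ (else some $a_j^-\geq 2$), identifies the dimension vector of $V_i$ as $\alpha(n,k_i,A_i)$, and invokes Proposition \ref{prop:alphamsimpel} to get $1\leq k_i\leq|A_i|$.

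This foundational error propagates into the quiver computation: the local quiver at a point with higher-dimensional summands is not a full subquiver of $Q'_n$, so Corollary \ref{cor:onequiver} cannot produce the arrow counts. The loop count $(k_i-1)(2|A_i|-k_i-1)$ is $\dim\Ext^1_{\Z_2^{*n}}(V_i,V_i)$ for a $k_i$-dimensional simple (your gloss "a character with multiplicity $k_i$" would instead give a vertex of dimension $k_i$ with \emph{zero} loops, since $Q'_n$ has no loops), and the cross terms $|A_i|k_j+|A_j|k_i-k_ik_j$ require evaluating the Euler form of the big quiver $Q_n$ on the vectors $\alpha(n,k_i,A_i)$ and $\alpha(n,k_j,A_j)$, as the paper does with the matrix $\mathcal{M}_n$; they are not sums of terms $|C\Delta D|-1$. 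Your sketch of the degeneration order via two elementary moves (refine one block; drop one unit of $k$) does match the paper's strategy in spirit, and the paper likewise realizes each move by a one-parameter family of simples degenerating to a direct sum, but that part cannot be repaired until the parametrization itself is built on simple summands of arbitrary dimension rather than on characters.
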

\begin{proof}
We will first show the correspondence between the possible local quiver settings and partitions $\mathbf{A}=\{A_i\}_{i=1}^l$ and $l$-tuples $\mathbf{k}=(k_i)_{i=1}^l$ satisfying $1\leq k_i \leq |A_i|$ for all $1\leq i \leq l$ and $|\mathbf{k}| = \sum_{i=1}^l k_i \leq m$.
\par Let $M=\bigoplus_{i=0}^l V_i^{e_i}$ be a semisimple decomposition of $M \in \rep_{\alpha(n,m)}\Z_2^{*n}$ with $V_0 = \psi_\emptyset$, $e_i \geq 1$ for $1\leq i \leq l$ and $e_0 \geq 0$. Set $k_i = \dim V_i$ for $1\leq i \leq l$. Then we have
$$
e_i \neq 1 \Rightarrow V_i \neq \psi_\emptyset,
$$
for if $V_i \neq \psi_\emptyset$ and $e_i \geq 2$, then $M \not\in \rep_{\alpha(n,m)}\Z_2^{*n}$, as then there would exist an element $j \in N$ such that $(\alpha(n,m))_j^- \geq 2$, which is a contradiction.
\par For $i \neq 0$, let $V_i \in \rep_{\beta(i)}\Z_2^{*n}$ and set
$$
A_i = \{j \in \N : (\beta(i))_j^- = 1\} = \{j \in \N : (\beta(i))_j^- \neq 0\}.
$$
Then the sets $A_1,\ldots,A_l$ form a partition of $N$ with $A_i \neq \emptyset$ for each $1\leq i \leq l$. In addition, from the condition that $(\beta(i))^+_j + (\beta(i))^-_j = k_i$ for all $j \in A_i$, it follows that
$$
\beta(i) = \alpha(n,k_i,A_i)
$$
By proposition \ref{prop:alphamsimpel}, this implies that $1\leq k_i \leq |A_i|$. But then, $e_0 = m - \sum_{i=1}^l k_i$, proving that $|\mathbf{k}|\leq  m$. 
\par We have associated a partition $\mathbf{A} = \{A_i\}_{i=1}^l$ of $N$ and a positive $l$-tuple $\mathbf{k}=(k_i)_{i=1}^l$ satisfying $1\leq k_i \leq |A_i|$ for all $1\leq i \leq l$ and $|\mathbf{k}|\leq  m$ to a semisimple representation in $\rep_{\alpha(n,m)} \Z_2^{*n}$.
\par The other way around, to a partition $\mathbf{A} = \{A_i\}_{i=1}^l$ of $N$ and a positive $l$-tuple $\mathbf{k}=(k_i)_{i=1}^l$ satisfying $1\leq k_i \leq |A_i|$ for all $1\leq i \leq l$ and $|\mathbf{k}|\leq  m$, we associate a semisimple representation $M \in \rep_{\alpha(n,m)}\Z_2^{*n}$ by choosing for each $1\leq i \leq l$ a simple representation $V_i \in \rep_{\alpha(n,k_i,A_i)}\Z_2^{*n}$, which we may do by proposition \ref{prop:alphamsimpel}, and taking $M_{\mathbf{A},\mathbf{k}} = \psi_\emptyset^{m-|\mathbf{k}|}\oplus\bigoplus_{i=1}^l V_l$. As the local quiver setting of a semisimple module depends purely on the dimension vectors of its semisimple components, we see that the local quiver setting does not depend on the chosen simple representations.
\par Regarding the degeneration statement, we will show that a local quiver setting $(\mathbf{A},\mathbf{k})$  degenerates to a quiver setting $(\mathbf{A}',\mathbf{k}')$ in the following two possibilities: 
\begin{itemize}
\item $\mathbf{A'} = \mathbf{A}$ and $k'_l+1 = k_l$, $k'_i = k_i$ for $1\leq i \leq l-1$,
\item $A_{l}' \sqcup A_{l+1}' = A_l$, $A_{i}' = A_i$ and $k'_{l}+k'_{l+1}=k_1$, $k'_{i}=k_i$ for $1\leq i \leq l-1$.
\end{itemize}
This will be sufficient, for degeneration is a transitive relation and a partition $\mathbf{A}=\{A_i\}_{i=1}^l$ is determined up to $S_l$-action, that is, $\mathbf{A}=\{A_i\}_{i=1}^l = \{A_{\sigma(i)}\}_{i=1}^l$ for each $\sigma \in S_l$. Let $M_{\mathbf{A'},\mathbf{k'}}$ be a semisimple representation of type $(\mathbf{A'},\mathbf{k'})$ in both cases.
\par If  $\mathbf{A'} = \mathbf{A}$ and $k'_l+1 = k_l$, then there exists a 1-dimensional family $\mathcal{C}$ of simples $(V_l(z))_{z \in \mathcal{C}} \in \rep_{\alpha(n,k_l,A_l)} \Z_2^{*n}$ that degenerates to $\psi_\emptyset \oplus V'_l$ with $V'_l \in \rep_{\alpha(n,k'_l,A'_l)} \Z_2^{*n}$.
Consequently, $M_{\mathbf{A'},\mathbf{k'}}$ is a degeneration of the family $(M_{\mathbf{A'},\mathbf{k'}}/(\psi_\emptyset\oplus V'_l) \oplus V_l(z))_{z \in \mathcal{C}}$, which are all of type $(\mathbf{A},\mathbf{k})$, proving the first possibility.
\par For the second possibility, there exists a 1-dimensional family $\mathcal{C}$ of simple representations $(V_l(z))_{z\in \mathcal{C}} \in \rep_{\alpha(n,k_l,A_l)} \Z_2^{*n}$ that degenerates to $V'_{l} \oplus V'_{l+1}$ with $V'_l \in \rep_{\alpha(n,k'_l,A'_l)} \Z_2^{*n}$ and $V'_{l+1} \in \rep_{\alpha(n,k'_{l+1},A'_{l+1})} \Z_2^{*n}$.
Consequently,  $(M_{\mathbf{A'},\mathbf{k'}}/(V'_l\oplus V'_{l+1}) \oplus V_l(z))_{z \in \mathcal{C}} $ is a 1-dimensional family of semisimple representations of type $(\mathbf{A},\mathbf{k})$ that degenerates to $M_{\mathbf{A'},\mathbf{k'}}$, finishing the second possibility.
\par Conversely, assume that the local quiver setting $(\mathbf{A}',\mathbf{k}')$ is a degeneration of $(\mathbf{A},\mathbf{k})$. A 1-dimensional family $\mathcal{C}$ of simple representations $(V_i(z))_{z \in \mathcal{C}} \in \rep_{\alpha(n,k_i,A_i)}\Z_2^{*n}$ can only degenerate to a sum of simples $\oplus_{j=1}^{l_i} V_{i_j}$ with $V_{i_j} \in \rep_{\alpha(n,k'_{i_j},A'_{i_j})}\Z_2^{*n}$ if $\sqcup_{j=1}^{l_i} A'_{i_j} = A_i$, for
$$
\psi_\emptyset^{k_i -\sum_{j=1}^{l_i} k_{i_j}}\oplus \bigoplus_{j=1}^{l_i}V_{i_j} \in \rep_{\alpha(n,k_i,A_i)}\Z_2^{*n}.
$$
This also shows that $\sum_{j=1}^{l_i} k_{i_j} \leq k_i$ for each $1\leq i \leq l$, finishing the proof of the degeneration statement.
\par To describe the local quiver setting in $M_{\mathbf{A},\mathbf{k}} = \psi_\emptyset^{m-|\mathbf{k}|}\oplus \bigoplus_{i=1}^l V_i$, we have
\begin{itemize}
\item the number of loops in the vertex corresponding to $V_i$ is equal to 
\begin{align*}
\dim \iss_{\alpha(n,k_i,A_i)} \Z_2^{*n}&= \dim \iss_{\alpha(|A_i|,k_i)} \Z_2^{*|A_i|}\\
									   &= (k_i-1)(2|A_i|-k_i-1)\\
									   &=k_i(|A_i|-k_i)+ (|A_i|-k_i)k_i + k_i^2 - 2|A_i| + 1.
\end{align*}
by proposition \ref{prop:alphamsimpel},
\item the number of arrows from the vertex corresponding to $V_i$ to the vertex corresponding to $V_j$ is (by the $S_n$ action on $\rep_{\alpha(n,m)} \Z_2^{*n}$) equal to
\footnotesize 
\begin{align*}
\dim\Ext^1_{\Z_2^{*n}}(V_i,V_j)&=-\chi_{Q_n}(\alpha(n,k_i,A_i),\alpha(n,k_j,A_j)) \\
               &= -\chi_{Q_n}(\alpha(n,k_i,\{1,\ldots,|A_i|\}),\alpha(n,k_j,\{|A_i|+1,\ldots,|A_i|+|A_j|\}))\\
               &=-\alpha(n,k_i,\{1,\ldots,|A_i|\})\mathcal{M}_n \alpha(n,k_j,\{|A_i|+1,\ldots,|A_i|+|A_j|\})^{t}\\
               &=-((k_i-1)k_j - (|A_i|-1)k_j - |A_j|k_i)\\
               &=|A_i|k_j + |A_j|k_i-k_ik_j\\
               &=k_i(|A_j|-k_j)+(|A_i|-k_i)k_j  + k_i k_j,
\end{align*}
\normalsize
\item the number of arrows from $\psi_\emptyset$ to the vertex corresponding to $V_i$ is
\begin{align*}
\dim\Ext^1_{\Z_2^{*n}}(\psi_\emptyset,V_i)&=-\chi_{Q_n}(\alpha(n,1,\emptyset),\alpha(n,k_i,A_i)) \\
							  &=|A_i|-k_i.
\end{align*}
The dimension vector is obvious for this local quiver setting. As the local quiver needs to be symmetric by the symmetry of $M_n$, this finishes the proof of the theorem.
\end{itemize}
\end{proof}
A consequence of theorem \ref{theorem:main} is the following:
\begin{corollary}
The Euler matrix of the local quiver setting $(\mathbf{A},\mathbf{k})$ for $\alpha(n,m)$ is equal to
$$
\begin{bmatrix}
-(\mathbf{k}^t\mathbf{v} + \mathbf{v}^t\mathbf{k} + \mathbf{k}^t \mathbf{k} - 2\diag(|A_1|,\ldots, |A_l|)) & -\mathbf{v}^t \\
-\mathbf{v} & 1 
\end{bmatrix},
$$
with $\mathbf{v} = (|A_i|-k_i)_{i=1}^l$, except if $\mathbf{v} = 0$, in which case the Euler matrix becomes
$$
-(\mathbf{k}^t \mathbf{k} - 2\diag(|A_1|,\ldots, |A_l|)) 
$$
\end{corollary}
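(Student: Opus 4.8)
The plan is to read the arrow and loop counts straight off the local quiver described in Theorem \ref{theorem:main} and convert them into the Euler matrix, whose $(i,j)$ entry is $\delta_{ij}$ minus the number of arrows from vertex $i$ to vertex $j$, with each loop at a vertex $v$ counting as one arrow $v \to v$. I order the vertices as $V_1,\dots,V_l$ followed by $\psi_\emptyset$, so that the $V$-vertices fill the top-left $l \times l$ block and $\psi_\emptyset$ occupies the last row and column. Writing $v_i = |A_i| - k_i$ so that $\mathbf{v} = (v_i)_{i=1}^l$, the whole computation reduces to three elementary matrix identities.

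First I would treat the off-diagonal entries inside the $V$-block. By Theorem \ref{theorem:main} the number of arrows $V_i \to V_j$ equals $|A_i|k_j + |A_j|k_i - k_i k_j$, so the corresponding Euler entry is its negative. On the other hand the $(i,j)$ entry ($i \neq j$) of $\mathbf{k}^t\mathbf{v} + \mathbf{v}^t\mathbf{k} + \mathbf{k}^t\mathbf{k}$ is $k_i v_j + v_i k_j + k_i k_j$, and substituting $v_i = |A_i| - k_i$ collapses this to exactly $|A_i|k_j + |A_j|k_i - k_i k_j$; since the correction $-2\diag(|A_i|)$ is diagonal, it does not affect off-diagonal entries, so the two sides agree.

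Next I would check the diagonal of the $V$-block. The loop count at $V_i$ equals $\dim\iss_{\alpha(n,k_i,A_i)}\Z_2^{*n} = (k_i-1)(2|A_i|-k_i-1)$ by Theorem \ref{theorem:main} (through Proposition \ref{prop:alphamsimpel}), so the Euler entry is $1 - (k_i-1)(2|A_i|-k_i-1)$. The matching diagonal entry of the proposed matrix is $-\bigl(2k_i v_i + k_i^2 - 2|A_i|\bigr)$, and a short expansion shows that both expressions equal $k_i^2 - 2|A_i|k_i + 2|A_i|$. For the last row and column, the number of arrows $\psi_\emptyset \to V_i$ is $|A_i| - k_i = v_i$, producing the blocks $-\mathbf{v}$ and $-\mathbf{v}^t$; and since $\dim\Ext^1_{\Z_2^{*n}}(\psi_\emptyset,\psi_\emptyset) = 0$ there are no loops at $\psi_\emptyset$, so its diagonal Euler entry is $1$. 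Assembling these pieces yields the full block matrix.

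The only real subtlety — the point to get right rather than a genuine obstacle — is the exceptional case $\mathbf{v} = 0$. Here $|A_i| = k_i$ for every $i$, so $|\mathbf{k}| = \sum_i |A_i| = n$; combined with the constraint $|\mathbf{k}| \leq m \leq n$ this forces $m = n$ and hence $m - |\mathbf{k}| = 0$. Thus $\psi_\emptyset$ has dimension $0$ and is not a vertex of the actual local quiver setting, so it must be deleted from the Euler matrix. Removing the last row and column leaves exactly the $l \times l$ block, and since the terms $\mathbf{k}^t\mathbf{v}$ and $\mathbf{v}^t\mathbf{k}$ vanish when $\mathbf{v}=0$, this block equals $-(\mathbf{k}^t\mathbf{k} - 2\diag(|A_1|,\dots,|A_l|))$, as claimed.
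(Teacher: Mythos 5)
Your proof is correct and takes essentially the same route as the paper: the corollary is stated there without a separate proof, being a direct entry-by-entry assembly of the arrow counts from Theorem \ref{theorem:main}, and the identities you verify (that $k_iv_j+v_ik_j+k_ik_j=|A_i|k_j+|A_j|k_i-k_ik_j$ and that $1-(k_i-1)(2|A_i|-k_i-1)=-(2k_iv_i+k_i^2-2|A_i|)$) are precisely the rewritten forms already displayed in the last lines of the loop and arrow computations in the proof of that theorem. Your handling of the exceptional case $\mathbf{v}=0$ (forcing $m=n$, hence $m-|\mathbf{k}|=0$, so the $\psi_\emptyset$ vertex is absent and its row and column are deleted) is also the intended reading of the statement.
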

It is well known that partitions of $N$ up to the action of $S_n$ correspond to Young diagrams. If we want to calculate the number of local quiver settings up to permutation in $\iss_{\alpha(n,n)} \Z_2^{*n}$, we get
\begin{proposition}
For each Young diagram given by the data $(\lambda_i,\mu_i)_{i=1}^l$ (so $\lambda_i > \lambda_{i+1}$ and there are $\mu_i$ rows with $\lambda_i$ boxes), there are $\prod_{i=1}^l \multiset{\lambda_i}{\mu_i}$ distinct local quiver settings in $\iss_{\alpha(n,n)} \Z_2^{*n}$.
\end{proposition}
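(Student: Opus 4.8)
\emph{Proof proposal.} The plan is to reduce the enumeration to elementary multiset combinatorics by invoking the classification in Theorem \ref{theorem:main}. First I would specialize that theorem to $m = n$: a local quiver setting in $\iss_{\alpha(n,n)}\Z_2^{*n}$ is given by a partition $\mathbf{A} = \{A_i\}_{i=1}^l$ of $N$ together with a tuple $\mathbf{k} = (k_i)_{i=1}^l$ subject to $1 \le k_i \le |A_i|$ and $|\mathbf{k}| = \sum_i k_i \le m$. I would point out that for $m = n$ the global constraint is automatic: since $\{A_i\}$ partitions $N$ we have $\sum_i |A_i| = n$, so $k_i \le |A_i|$ already forces $\sum_i k_i \le n = m$. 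Hence each $k_i$ may be chosen independently in its range $\{1,\ldots,|A_i|\}$, with no coupling between the blocks.

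Next I would show that the isomorphism type of the local quiver setting depends only on the multiset of pairs $\{(|A_i|, k_i)\}_{i=1}^l$. This is read off directly from Theorem \ref{theorem:main}: the dimension at the vertex $V_i$ is $k_i$, the number of loops there is $(k_i-1)(2|A_i|-k_i-1)$, the number of arrows between $V_i$ and $V_j$ is $|A_i|k_j + |A_j|k_i - k_ik_j$, the arrow multiplicity from $\psi_\emptyset$ to $V_i$ is $|A_i|-k_i$, and the central vertex $\psi_\emptyset$ has dimension $n - |\mathbf{k}|$ and no loops. Every one of these data is a function of the pairs $(|A_i|,k_i)$ alone, so the setting factors through the multiset. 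Conversely, from the setting one recovers each pair: $\psi_\emptyset$ is the distinguished loop-free central vertex, $k_i$ is the dimension at $V_i$, and $|A_i| = k_i + (|A_i|-k_i)$ is obtained by adding back the multiplicity of the arrow from $\psi_\emptyset$ to $V_i$. Thus the local quiver settings are in bijection with multisets of admissible pairs $(|A_i|,k_i)$.

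I would then use the standard fact that the $S_n$-orbits of partitions of $N$ are exactly the Young diagrams recording the multiset of block sizes. Fixing the diagram $(\lambda_i,\mu_i)_{i=1}^l$ therefore fixes that there are precisely $\mu_i$ blocks of size $\lambda_i$ for each $i$. Combining this with the previous paragraph, a local quiver setting of this fixed shape is determined by, for each $i$, the \emph{unordered} assignment of a value $k \in \{1,\ldots,\lambda_i\}$ to each of the $\mu_i$ equal-size blocks $A$ of cardinality $\lambda_i$ — that is, by a size-$\mu_i$ multiset drawn from the $\lambda_i$-element set $\{1,\ldots,\lambda_i\}$. Since the assignments for distinct sizes $\lambda_i$ are independent and the number of size-$\mu_i$ multisets from a $\lambda_i$-element set is $\multiset{\lambda_i}{\mu_i}$, the total count is $\prod_{i=1}^l \multiset{\lambda_i}{\mu_i}$, as claimed.

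The step I expect to demand the most care is the injectivity claim underlying ``distinct'': that inequivalent multisets of pairs $(|A_i|,k_i)$ genuinely yield non-isomorphic quiver settings. The recovery argument above establishes this once $\psi_\emptyset$ is correctly singled out, but I would verify the boundary configurations separately, most notably the case $|\mathbf{k}| = n$, where $\mathbf{v} = (|A_i|-k_i)_{i=1}^l = 0$, the central vertex degenerates to dimension $0$ and disappears, and every vertex satisfies $|A_i| = k_i$; there the pair is still recovered from the vertex dimension alone. Checking that no two shapes collapse in these degenerate regimes confirms that the combinatorial count is exactly the count of distinct settings.
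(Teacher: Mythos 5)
Your counting argument is essentially the paper's: specialize Theorem \ref{theorem:main} to $m=n$ (where, as you correctly observe, the constraint $|\mathbf{k}|\leq m$ is automatic because $\sum_i|A_i|=n$), fix the Young diagram, and for each rectangular block choose an unordered multiset of $\mu_i$ values from $\{1,\ldots,\lambda_i\}$, giving $\prod_{i=1}^l\multiset{\lambda_i}{\mu_i}$. The paper's proof consists of exactly this reduction and stops there, implicitly reading ``distinct local quiver settings'' as distinct classification data $(\mathbf{A},\mathbf{k})$ modulo permutation, so that no injectivity argument is required.

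Your supplementary injectivity argument, however, misreads Theorem \ref{theorem:main} at a concrete point. In the local quiver the dimension at the vertex corresponding to $V_i$ is the \emph{multiplicity} of $V_i$ in the semisimple decomposition, which is $1$ (only $\psi_\emptyset$ can occur with multiplicity $>1$ in $\rep_{\alpha(n,n)}\Z_2^{*n}$); it is \emph{not} $k_i=\dim V_i$, which enters only through the loop count $(k_i-1)(2|A_i|-k_i-1)$ and the arrow count $|A_i|-k_i$. Hence ``$k_i$ is the dimension at $V_i$'' is false, and in your boundary case $\mathbf{v}=0$ the pairs cannot be ``recovered from the vertex dimension alone'': all vertex dimensions there equal $1$. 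The recovery can be repaired --- writing $L_i$ for the loops at $V_i$ and $d_i$ for the number of arrows from the central vertex, $k_i-1$ is the unique nonnegative root of $x^2+2d_ix-L_i$, so $(|A_i|,k_i)$ is determined once the central vertex is identified --- but identifying the central vertex is precisely where the strong claim breaks. For $n=m=2$ the data $(\{N\},k=1)$ and $(\{\{1\},\{2\}\},\mathbf{k}=(1,1))$ produce isomorphic quiver settings: two dimension-$1$ vertices, no loops, one arrow in each direction. So inequivalent multisets of pairs $(|A_i|,k_i)$ need not yield non-isomorphic abstract quiver settings. Since those two data lie in different Young diagrams, the per-diagram count of the proposition is unaffected, but your distinctness claim as stated is false; to salvage it one must work within a fixed diagram, note that different values of $|\mathbf{k}|$ are separated by the total dimension $l+n-|\mathbf{k}|$ of the local quiver setting, and treat the regime $n-|\mathbf{k}|\leq 1$ (central vertex of dimension $0$ or $1$) by a separate argument rather than by the recovery procedure you describe.
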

\begin{proof}
For each rectangle subblock consisting of $\mu_i \lambda_i$ blocks, the possible local quiver settings depend on natural numbers $k_1, \ldots , k_{\mu_i}$ with $1\leq k_j \leq \lambda_i$ for each $1\leq j \leq \mu_i$. Up to permutation, we may assume that
$$
\lambda_i \geq k_1 \geq k_2 \geq \ldots \geq k_{\mu_i} \geq 1,
$$
which means that we have to choose $\mu_i$ elements in $\{1,\ldots,\lambda_i\}$, with repetition possible. This leads to the claimed multiset coefficient.
\end{proof}
We can also classify the smooth points in $\iss_{\alpha(n,m)} \Z_2^{*n}$ by their local quiver setting.
\begin{theorem}
A point $p \in \iss_{\alpha(n,m)} \Z_2^{*n}$ is smooth if and only if one of the following statements is true:
\begin{itemize}
\item The semisimple representation lying over $p$ is simple,
\item $n=m$ and the local quiver setting $(\mathbf{A},\mathbf{k})$ in $p$ has the property $\mathbf{A} = \{N\}$, or
\item $n=m=2$.
\end{itemize}
\end{theorem}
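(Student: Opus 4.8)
The plan is to route the question through the local quiver description of Theorem \ref{theorem:main} and to reduce it to the smoothness criterion of Theorem \ref{th:symmetric}. By the \'etale local structure theorem, $\iss_{\alpha(n,m)}\Z_2^{*n}$ is smooth at $p$ if and only if $\iss$ of the associated local quiver setting $(\mathbf{A},\mathbf{k})$ is smooth at the origin. That setting has a vertex $\psi_\emptyset$ of dimension $m-|\mathbf{k}|$, a vertex $V_i$ of dimension $1$ for each block $A_i$, with $(k_i-1)(2|A_i|-k_i-1)$ loops at $V_i$, $|A_i|k_j+|A_j|k_i-k_ik_j$ arrows between $V_i$ and $V_j$, and $|A_i|-k_i$ arrows between $\psi_\emptyset$ and $V_i$. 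Since $\dim\Ext^1(\psi_\emptyset,\psi_\emptyset)=0$, loops occur only at the dimension-one vertices $V_i$, and a loop at a vertex of dimension $1$ contributes a free affine factor $\A^1$ to $\iss$. Hence I would first strip all loops: if $Q'$ denotes the resulting loop-free quiver, then locally $\iss\cong\iss_{\alpha}Q'\times\A^{g}$ with $g$ the total number of loops, so smoothness of $\iss$ at $p$ is equivalent to smoothness of $\iss_\alpha Q'$, to which Theorem \ref{th:symmetric} applies verbatim.

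For sufficiency I verify that each of the three cases yields a quiver built from the admissible blocks. If $M$ is simple then $\mathbf{A}=\{N\}$ and $k_1=m$, so the local quiver is a single vertex of dimension $1$ carrying only loops, and $\iss$ is an affine space. If $n=m$ and $\mathbf{A}=\{N\}$ with $k_1<m$, the loop-free quiver $Q'$ is the single edge $\psi_\emptyset-V_1$ carrying $n-k_1$ arrows between a vertex of dimension $m-k_1=n-k_1$ and a vertex $V_1$ of dimension $1$; the number of arrows $n-k_1$ equals the dimension $n-k_1$ of $\psi_\emptyset$, so the admissible block with a dimension-one endpoint applies (with equality) and $\iss$ is smooth. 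If $n=m=2$ the only partitions are $\{N\}$ (already treated) and the singleton partition with $k_1=k_2=1$, whose loop-free quiver is a single edge between two dimension-one vertices carrying $|A_1|k_2+|A_2|k_1-k_1k_2=1$ arrow, again an admissible block.

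For necessity I show that if none of the three conditions holds, then $Q'$ violates Theorem \ref{th:symmetric}. The key elementary estimate is $|A_i|k_j+|A_j|k_i-k_ik_j=k_j(|A_i|-k_i)+|A_j|k_i\geq 1$, so every pair $V_i,V_j$ is joined by at least one arrow. Thus if $l\geq 3$ the vertices $V_1,V_2,V_3$ form a triangle and $Q'$ is not tree-like. If $l=2$ and $\psi_\emptyset$ is present (that is $|\mathbf{k}|<m$), I split on how many $V_i$ meet $\psi_\emptyset$: joining both yields a triangle; joining neither forces $|\mathbf{k}|=|A_1|+|A_2|=n$, contradicting $|\mathbf{k}|<m\leq n$; joining exactly one, say with $|A_1|=k_1$, gives $|A_1||A_2|\geq 2$ arrows between the dimension-one vertices $V_1,V_2$, which is not an admissible block. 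If $l=2$ and $\psi_\emptyset$ is absent, the edge $V_1-V_2$ carries $\geq 2$ arrows unless $|A_1|=|A_2|=k_1=k_2=1$, which forces $n=m=2$. Finally, if $\mathbf{A}=\{N\}$ with $k_1<m$ and $n>m$, the edge $\psi_\emptyset-V_1$ carries $n-k_1$ arrows into a vertex of dimension $m-k_1<n-k_1$, again failing the block condition. In every remaining case the criterion fails, so $p$ is singular.

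The arithmetic bookkeeping is light; the main obstacle is the faithful dictionary between smoothness of $\iss_{\alpha(n,m)}\Z_2^{*n}$ at $p$ and the loop-free reduction of the local quiver. Concretely I must justify that stripping loops at dimension-one vertices preserves (non)smoothness, and that after stripping the quiver is genuinely loop-free so that Theorem \ref{th:symmetric}, which forbids loops, can be invoked. Once this reduction is in place, the arrow-count inequalities above decide every case purely by inspection.
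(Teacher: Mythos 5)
Your proof is correct, and it takes a genuinely different route from the paper's. The paper does not analyze every local quiver setting separately: it exploits the degeneration order of Theorem \ref{theorem:main}, together with the facts that smoothness of a point depends only on its local quiver type and that the smooth locus is open. Concretely, for sufficiency it checks a single extremal setting, $(\{N\},1)$, which carries no loops, so Theorem \ref{th:symmetric} applies verbatim, and propagates smoothness to every setting $(\{N\},k)$ that degenerates to it; for necessity it checks only two extremal singular settings, $(\{A_1,A_2\},(k_1,k_2))$ with $k_1+k_2=m$ and $n>2$ (where smoothness would force $|A_1|k_2+|A_2|k_1-k_1k_2=1$, which is impossible) and $(\{N\},m-1)$ with $m<n$ (where smoothness would force $n-(m-1)=1$), and then notes that every remaining setting is a degeneration of one of these, hence singular. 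Your argument replaces this propagation by an exhaustive case analysis on the number $l$ of blocks ($l\geq 3$ gives a triangle, $l=2$ splits into your four subcases, $l=1$ is your last case), and your key numerical computations are exactly the paper's two extremal checks appearing as subcases. What your route buys: it is self-contained (no stratification or closure argument is needed), and your explicit loop-stripping isomorphism $\iss \cong \iss_\alpha Q' \times \A^g$ actually patches a point the paper leaves implicit, since the paper's own singular test settings carry loops whenever some $k_i\geq 2$, while Theorem \ref{th:symmetric} is stated only for loop-free quivers. What the paper's route buys: far less bookkeeping, three settings checked instead of a full enumeration, at the price of invoking the degeneration machinery. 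Two small points you should still record explicitly: connectedness of the loop-stripped quiver, needed to invoke Theorem \ref{th:symmetric}, which follows from your estimate that all pairs $V_i,V_j$ are joined and that $\psi_\emptyset$, when it has positive dimension, is joined to some $V_i$; and, in the subcase where $\psi_\emptyset$ is joined to exactly one $V_i$, the inequality $|A_1||A_2|\geq 2$, which holds because $l=2$ and $|\mathbf{k}|<m$ force $m\geq 3$, hence $n\geq 3$.
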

\begin{proof}
If the semisimple representation lying over $p$ is simple, then $p$ lies in the smooth locus of $\iss_{\alpha(n,m)} \Z_2^{*n}$ by the Cayley-smoothness of $\C(\Z_2^{*n})$. If $m=n$ and the local quiver setting $(\mathbf{A},\mathbf{k})$ in $p$ has the property $\mathbf{A} = \{N\}$, then it is enough to show that the most degenerate local quiver setting with $\mathbf{A} = \{N\}$ is smooth. The most degenerate local quiver setting with this property is
\begin{center}
\begin{tikzpicture}[
    implies/.style={double,double equal sign distance,implies-implies},
    dot/.style={shape=circle,fill=black,minimum size=2pt,
                inner sep=0pt,outer sep=2pt},
                state/.style={circle, draw, minimum size=1.2 cm}]
\node[state] (a) at ( -2, 0) {$1$};
\node[state] (b) at ( 2, 0) {$n-1$};
\tikzset{every node/.style={fill=white},rectangle}
\draw[->,font=\scriptsize]
    (a) edge[implies] node[vertice,rectangle,anchor=center]{$n-1$}(b)
    ;
\end{tikzpicture},
\end{center}
which is one of the smooth quiver settings of theorem \ref{th:symmetric}. So indeed, in this case, all points $p\in \iss_{\alpha(n,n)} \Z_2^{*n}$ with local quiver setting $(\{N\},k)$, $1\leq k \leq n$ are smooth.
\par To show that in all other cases $p$ is a singular point of $\iss_{\alpha(n,m)} \Z_2^{*n}$, we will show that 
\begin{itemize}
\item if $n>2$, if the local quiver setting in $p$ is determined by $(\{A_1,A_2\},(k_1,k_2))$ with $k_1+k_2=m$, then $p$ is a singular point, and
\item if $m<n$ and if the local quiver setting in $p$ is determined by $(\{N\},m-1)$, then $p$ is a singular point.
\end{itemize}
Again, this will be enough, for all local quiver settings are degenerations of these two special cases.
\par Assume that the local quiver setting in $p$ is determined by $(\{A_1,A_2\},(k_1,k_2))$ with $k_1+k_2=m$. Combining  theorem \ref{th:symmetric} and theorem \ref{theorem:main}, if $\iss_{\alpha(n,m)} \Z_2^{*n}$ is smooth in $p$, then this implies that
$$
|A_1|k_2 + |A_2|k_1-k_1k_2 = 1 \Leftrightarrow (|A_1|-\frac{1}{2} k_1)k_2 + (|A_2|-\frac{1}{2} k_2)k_1=1.
$$
The integers $k_1$ and $k_2$ are strictly larger than 0 and both $|A_1|-\frac{1}{2} k_1$ and $|A_2|-\frac{1}{2} k_2$ are strictly larger than 1, so this can never happen. This solves the first case.
\par Assume now that $p$ is a smooth point, $m<n$ and that the local quiver setting in $p$ is determined by  $(\{N\},m-1)$. Again by combining theorem \ref{th:symmetric} and theorem \ref{theorem:main}, this would imply that
$$
n-(m-1) = 1 \Leftrightarrow n=m,
$$
contradicting that $m<n$.
\par In the special case that $n=m=2$, then we have seen in section \ref{sec:tame} that $\rep_{\alpha(2,2)} \Z_2 * \Z_2/\GL_2(\C) = \A^1$, which is clearly smooth.
\end{proof}
\par From the Young degeneration graph (the degeneration graph labelled by Young diagrams and positive integers satisfying the conditions from theorem \ref{theorem:main}) for $\alpha(n,n)$, one can deduce the Young degeneration graph for $\alpha(n,m)$ for any $1\leq m \leq n$, by taking the full subgraph on all vertices with the property that $\sum_{i=1}^l k_l = m$.
\par We can also find the quiver degeneration graph (the degeneration graph labelled by the possible local quiver settings) for $\alpha(n,m)$ for any $1 \leq m \leq n$ from the quiver degeneration graph of $\alpha(n,m)$, by taking all the local quiver settings starting from the $n-m$ column and decreasing the dimension of the vertex corresponding to $\psi_\emptyset$ in the remaining local quiver settings with $n-m$.
\par As an example, in figures \ref{Degenerationgraph3,3young} and \ref{Degenerationgraph4,4young}, the degeneration graphs for $(n,m) = (3,3)$ and $(n,m) = (4,4)$ are depicted using Young diagrams. Equivalently, in figures \ref{Degenerationgraph3,3quiver} and \ref{Degenerationgraph4,4quiver} the corresponding local quiver settings are depicted.
\par In higher dimensions, calculating the entire degeneration graph is quite time-consuming, already for $(n,m) = (5,5)$ there are 24 different local quiver settings. Therefore, it is more suitable to calculate the degeneration graph corresponding to a Young diagram. As an example, in figure \ref{Degenerationgraph333youngtableau}, all possible local quiver settings are shown for the Young diagram $(3,3,3)$.

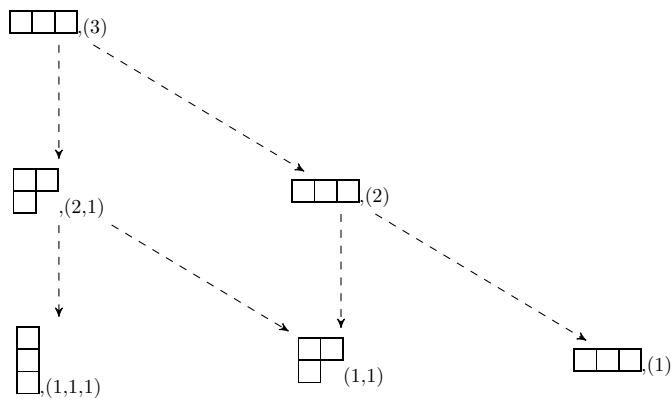
\begin{figure}
\centering 
\begin{tikzpicture}[>=stealth', shorten >=1pt, auto,
    node distance=2.5cm, scale=3/4, 
    transform shape, align=center, 
    state/.style={circle, draw, minimum size=2cm}]
\node (a) at (0,0) {\yng(3),(3)};
\node (b) at (0,-3) {\yng(2,1),(2,1)};
\node (c) at (5,-3) {\yng(3),(2)};
\node (d) at (0,-6) {\yng(1,1,1),(1,1,1)};
\node (e) at (5,-6){\yng(2,1)(1,1)};
\node (f) at (10,-6) {\yng(3),(1)};
\tikzset{every node/.style={fill=white},rectangle}
\draw[->,font=\scriptsize]
    (a) edge[dashed] (b)        
    (a) edge[dashed] (c)        
    (b) edge[dashed] (d)        
    (b) edge[dashed] (e)        
    (c) edge[dashed] (e)        
    (c) edge[dashed] (f)                
    ;
\end{tikzpicture}
\caption{Degeneration graphs of local quiver settings in $\iss_{\alpha(3,3)} \Z_2^{*3}$ with Young diagrams}
\label{Degenerationgraph3,3young}
\end{figure}

%met quivers
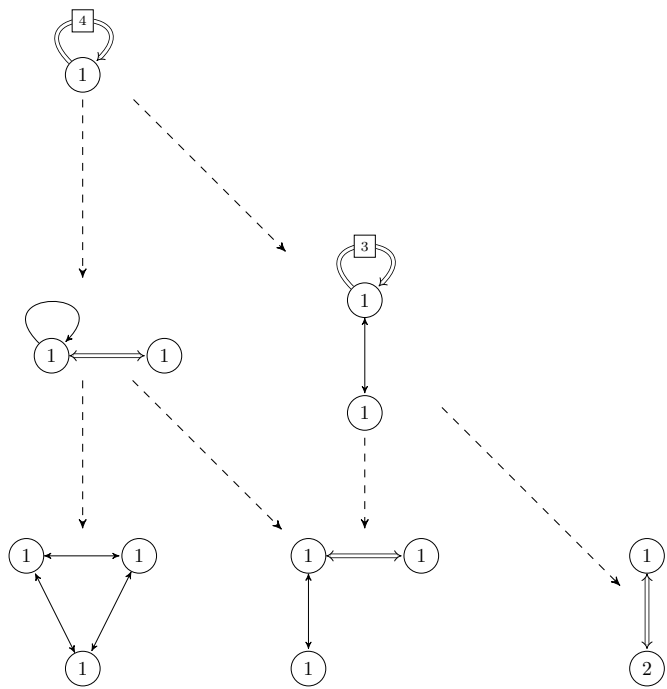
\begin{figure}
\centering
\begin{tikzpicture}[>=stealth', shorten >=1pt, auto,
    node distance=2.5cm, scale=3/4, 
    transform shape, align=center, 
    state/.style={circle, draw, minimum size=2cm}]
\node (3three) at (0,0) {
\begin{tikzpicture}[
    implies/.style={double,double equal sign distance,-implies},
    dot/.style={shape=circle,fill=black,minimum size=2pt,
                inner sep=0pt,outer sep=2pt}]
\node[vertice,circle] (a) at ( 0, 0) {$1$};
\tikzset{every node/.style={fill=white},rectangle}
\draw[->,font=\scriptsize]
    (a) edge[loop above,implies,out=135,in=45,looseness=8] node[vertice,rectangle,anchor=center]{$4$}(a)
    ;
\end{tikzpicture}};
\node (3three21) at (0,-5) {\begin{tikzpicture}[
    implies/.style={double,double equal sign distance,implies-implies},
    dot/.style={shape=circle,fill=black,minimum size=2pt,
                inner sep=0pt,outer sep=2pt}]
\node[vertice,circle] (f) at ( 0, 0) {$1$};
\node[vertice,circle] (g) at ( 2, 0) {$1$};
\tikzset{every node/.style={fill=white},rectangle}
\draw[->,font=\scriptsize]
    (f) edge[loop,out=135,in=45,looseness=8] (f)
    (f) edge[implies] (g)
    ;
\end{tikzpicture}};
\node (2three) at (5,-5) {\begin{tikzpicture}[
    implies/.style={double,double equal sign distance,implies-implies},
    impliesloop/.style={double,double equal sign distance,-implies},
    dot/.style={shape=circle,fill=black,minimum size=2pt,
                inner sep=0pt,outer sep=2pt}]
\node[vertice,circle] (b) at ( 0, 0) {$1$};
\node[vertice,circle] (c) at ( 0, -2) {$1$};
\tikzset{every node/.style={fill=white},rectangle}
\draw[->,font=\scriptsize]
    (b) edge[impliesloop,out=135,in=45,looseness=8] node[vertice,rectangle,anchor=center]{$3$}(b)
    (b) edge[<->] (c)
    ;
\end{tikzpicture}};
\node (3three111) at (0,-10) {\begin{tikzpicture}[anchor=center,
    implies/.style={double,double equal sign distance,implies-implies},
    dot/.style={shape=circle,fill=black,minimum size=2pt,
                inner sep=0pt,outer sep=2pt}]
\node[vertice,circle] (k) at ( -1, 0) {$1$};
\node[vertice,circle] (l) at (  1, 0) {$1$};
\node[vertice,circle] (m) at ( 0, {-2*tan(45)}) {$1$};
\tikzset{every node/.style={fill=white},rectangle}
\draw[->,font=\scriptsize]
    (k) edge[<->] (l)
    (l) edge[<->] (m)
    (m) edge[<->] (k)    
    ;
\end{tikzpicture}};
\node (2three11) at (5,-10){\begin{tikzpicture}[anchor=center,
    implies/.style={double,double equal sign distance,implies-implies},
    dot/.style={shape=circle,fill=black,minimum size=2pt,
                inner sep=0pt,outer sep=2pt}]
\node[vertice,circle] (h) at ( -1, 0) {$1$};
\node[vertice,circle] (j) at (  1, 0) {$1$};
\node[vertice,circle] (i) at ( -1, -2) {$1$};
\tikzset{every node/.style={fill=white},rectangle}
\draw[->,font=\scriptsize]
    (h) edge[<->] (i)
    (h) edge[implies] (j)
    ;
\end{tikzpicture}};
\node (1three) at (10,-10) {\begin{tikzpicture}[
    implies/.style={double,double equal sign distance,implies-implies},
    dot/.style={shape=circle,fill=black,minimum size=2pt,
                inner sep=0pt,outer sep=2pt}]
\node[vertice,circle] (d) at ( 0, 0) {$1$};
\node[vertice,circle] (e) at ( 0, -2) {$2$};
\tikzset{every node/.style={fill=white},rectangle}
\draw[->,font=\scriptsize]
    (d) edge[implies] (e)
    ;
\end{tikzpicture}};
\tikzset{every node/.style={fill=white},rectangle}
\draw[->,font=\scriptsize]  
	(3three) edge[dashed] (3three21)
	(3three21) edge[dashed] (3three111)
	(3three21) edge[dashed] (2three11)            
	(3three) edge[dashed] (2three)     
	(2three) edge[dashed] (2three11)
	(2three) edge[dashed] (1three)       
    ;
\end{tikzpicture}
\caption{Degeneration graphs of local quiver settings in $\iss_{\alpha(3,3)} \Z_2^{*3}$ with quiver settings}
\label{Degenerationgraph3,3quiver}
\end{figure}
\newpage
\begin{sidewaysfigure}
\centering 
\begin{tikzpicture}[>=stealth', shorten >=1pt, auto,
    node distance=2.5cm, scale=3/4, 
    transform shape, align=center, 
    state/.style={circle, draw, minimum size=2cm}]
\node (4four) at (0,0) {\yng(4),(4)};
\node (4four31) at (-2,-5) {\yng(3,1),(3,1)};
\node (4four22) at (2,-5) {\yng(2,2),(2,2)};
\node (4four211) at (0,-10) {\yng(2,1,1),(2,1,1)};
\node (4four1111) at (0,-15) {\yng(1,1,1,1),(1,1,1,1)};
\node (3four) at (8,-5) {\yng(4),(3)};
\node (3four31) at (6,-10) {\yng(3,1),(2,1)};
\node (3four22) at (10,-10) {\yng(2,2),(2,1)};
\node (3four211) at (8,-15) {\yng(2,1,1),(1,1,1)};
\node (2four) at (16,-10) {\yng(4),(2)};
\node (2four31) at (14,-15) {\yng(3,1),(1,1)};
\node (2four22) at (18,-15) {\yng(2,2),(1,1)};
\node (1four) at (24,-15) {\yng(4),(1)};
\tikzset{every node/.style={fill=white},rectangle}
\draw[->,font=\scriptsize]
	(4four) edge[dashed] (4four31)            
	(4four) edge[dashed] (4four22)            
	(4four31) edge[dashed] (4four211)
	(4four22) edge[dashed] (4four211)
	(4four211) edge[dashed] (4four1111)
	(3four) edge[dashed] (3four31)            
	(3four) edge[dashed] (3four22)            
	(3four31) edge[dashed] (3four211)
	(3four22) edge[dashed] (3four211)
	(2four) edge[dashed] (2four31)            
	(2four) edge[dashed] (2four22)
	(4four) edge[dashed] (3four)            
	(3four) edge[dashed] (2four)            
	(2four) edge[dashed] (1four) 
	(4four31) edge[dashed] (3four31)
	(3four31) edge[dashed] (2four31)
	(4four22) edge[dashed] (3four22)
	(3four22) edge[dashed] (2four22)
	(4four211) edge[dashed] (3four211)
    ;
\end{tikzpicture}
\caption{Degeneration graph of local quiver settings in $\iss_{\alpha(4,4)} \Z_2^{*4}$ with Young diagrams}
\label{Degenerationgraph4,4young}
\end{sidewaysfigure}
\newpage

\begin{sidewaysfigure}
\centering 
\begin{tikzpicture}[>=stealth', shorten >=1pt, auto,
    node distance=2.5cm, scale=3/4, 
    transform shape, align=center, 
    state/.style={circle, draw, minimum size=2cm}]
\node (4four) at (0,0) {\begin{tikzpicture}[
    implies/.style={double,double equal sign distance,-implies},
    dot/.style={shape=circle,fill=black,minimum size=2pt,
                inner sep=0pt,outer sep=2pt}]
\node[vertice,circle] (a) at ( 0, 0) {$1$};
\tikzset{every node/.style={fill=white},rectangle}
\draw[->,font=\scriptsize]
    (a) edge[loop above,implies,out=135,in=45,looseness=8] node[vertice,rectangle,anchor=center]{$9$}(a)
    ;
\end{tikzpicture}};
\node (4four31) at (-2,-5) {\begin{tikzpicture}[
    implies/.style={double,double equal sign distance,implies-implies},
    impliesloop/.style={double,double equal sign distance,-implies},
    dot/.style={shape=circle,fill=black,minimum size=2pt,
                inner sep=0pt,outer sep=2pt}]
\node[vertice,circle] (c) at ( -1, 0) {$1$};
\node[vertice,circle] (d) at ( 1, 0) {$1$};
\tikzset{every node/.style={fill=white},rectangle}
\draw[->,font=\scriptsize]
    (c) edge[loop above,out=135,in=45,looseness=8,impliesloop]node[vertice,rectangle,anchor=center]{$4$} (c)
    (c) edge[implies]node[vertice,rectangle,anchor=center]{$3$} (d)
    ;
\end{tikzpicture}};
\node (4four22) at (2,-5) {\begin{tikzpicture}[
    implies/.style={double,double equal sign distance,implies-implies},
    dot/.style={shape=circle,fill=black,minimum size=2pt,
                inner sep=0pt,outer sep=2pt}]
\node[vertice,circle] (e) at ( -1, 0) {$1$};
\node[vertice,circle] (f) at (  1, 0) {$1$};
\tikzset{every node/.style={fill=white},rectangle}
\draw[->,font=\scriptsize]
    (e) edge[loop above,out=135,in=45,looseness=8] (e)
    (f) edge[loop above,out=135,in=45,looseness=8] (f)
    (e) edge[implies]node[vertice,rectangle,anchor=center]{$4$} (f)
    ;
\end{tikzpicture}};
\node (4four211) at (0,-10) {\begin{tikzpicture}[anchor=center,
    implies/.style={double,double equal sign distance,implies-implies},
    dot/.style={shape=circle,fill=black,minimum size=2pt,
                inner sep=0pt,outer sep=2pt}]
\node[vertice,circle] (g) at ( -1, 0) {$1$};
\node[vertice,circle] (h) at (  1, 0) {$1$};
\node[vertice,circle] (i) at ( 0, {-2*tan(45)}) {$1$};
\tikzset{every node/.style={fill=white},rectangle}
\draw[->,font=\scriptsize]
    (g) edge[<->] (h)
    (g) edge[implies] (i)
    (h) edge[implies] (i)    
    (i) edge[loop above,out=-135,in=-45,looseness=8] (i)
    ;
\end{tikzpicture}};
\node (4four1111) at (0,-15) {\begin{tikzpicture}[anchor=center,
    implies/.style={double,double equal sign distance,implies-implies},
    dot/.style={shape=circle,fill=black,minimum size=2pt,
                inner sep=0pt,outer sep=2pt}]
\node[vertice,circle] (j) at ( 1, 1) {$1$};
\node[vertice,circle] (k) at ( -1, 1) {$1$};
\node[vertice,circle] (l) at ( -1, -1) {$1$};
\node[vertice,circle] (m) at ( 1, -1) {$1$};
\tikzset{every node/.style={fill=white},rectangle}
\draw[->,font=\scriptsize]
	(j) edge[<->] (k)
	(j) edge[<->] (l)
	(j) edge[<->] (m)
    (k) edge[<->] (l)
    (l) edge[<->] (m)
    (m) edge[<->] (k)    
    ;
\end{tikzpicture}};
\node (3four) at (8,-5) {\begin{tikzpicture}[
    implies/.style={double,double equal sign distance,implies-implies},
    impliesloop/.style={double,double equal sign distance,-implies},
    dot/.style={shape=circle,fill=black,minimum size=2pt,
                inner sep=0pt,outer sep=2pt}]
\node[vertice,circle] (n) at ( 0, 0) {$1$};
\node[vertice,circle] (o) at ( 0, -2) {$1$};
\tikzset{every node/.style={fill=white},rectangle}
\draw[->,font=\scriptsize]
    (n) edge[loop above,impliesloop,out=135,in=45,looseness=8] node[vertice,rectangle,anchor=center]{$8$}(n)
    (n) edge[<->] (o)
    ;
\end{tikzpicture}};
\node (3four31) at (6,-10) {\begin{tikzpicture}[anchor=center,
    implies/.style={double,double equal sign distance,implies-implies},
    impliesloop/.style={double,double equal sign distance,-implies},
    dot/.style={shape=circle,fill=black,minimum size=2pt,
                inner sep=0pt,outer sep=2pt}]
\node[vertice,circle] (r) at ( -1, 0) {$1$};
\node[vertice,circle] (q) at (  1, 0) {$1$};
\node[vertice,circle] (p) at ( -1, -2) {$1$};
\tikzset{every node/.style={fill=white},rectangle}
\draw[->,font=\scriptsize]
    (p) edge[<->] (r)
    (r) edge[implies]node[vertice,rectangle,anchor=center]{$3$} (q)    
    (r) edge[loop above,out=135,in=45,looseness=8,impliesloop]node[vertice,rectangle,anchor=center]{$3$} (r)
    ;
\end{tikzpicture}};
\node (3four22) at (10,-10) {\begin{tikzpicture}[anchor=center,
    implies/.style={double,double equal sign distance,implies-implies},
    dot/.style={shape=circle,fill=black,minimum size=2pt,
                inner sep=0pt,outer sep=2pt}]
\node[vertice,circle] (t) at ( -1, 0) {$1$};
\node[vertice,circle] (u) at (  1, 0) {$1$};
\node[vertice,circle] (s) at ( 1, -2) {$1$};
\tikzset{every node/.style={fill=white},rectangle}
\draw[->,font=\scriptsize]
    (s) edge[<->] (u)
    (t) edge[implies]node[vertice,rectangle,anchor=center]{$4$} (u)    
    (t) edge[loop above,out=135,in=45,looseness=8] (t)
    ;
\end{tikzpicture}};
\node (3four211) at (8,-15) {\begin{tikzpicture}[anchor=center,
    implies/.style={double,double equal sign distance,implies-implies},
    dot/.style={shape=circle,fill=black,minimum size=2pt,
                inner sep=0pt,outer sep=2pt}]
\node[vertice,circle] (v) at ( -1, 0) {$1$};
\node[vertice,circle] (w) at (  1, 0) {$1$};
\node[vertice,circle] (x) at (  0, {-2*tan(45)}) {$1$};
\node[vertice,circle] (y) at ( 0 , {-2-2*tan(45)}) {$1$};
\tikzset{every node/.style={fill=white},rectangle}
\draw[->,font=\scriptsize]
	(x) edge[implies] (v)
	(x) edge[implies] (w)
	(v) edge[<->] (w)
    (x) edge[<->] (y)    
    ;
\end{tikzpicture}};
\node (2four) at (16,-10) {\begin{tikzpicture}[
    implies/.style={double,double equal sign distance,implies-implies},
    impliesloop/.style={double,double equal sign distance,-implies},
    dot/.style={shape=circle,fill=black,minimum size=2pt,
                inner sep=0pt,outer sep=2pt}]
\node[vertice,circle] (z) at ( 0, 0) {$1$};
\node[vertice,circle] (A) at ( 0, -2) {$2$};
\tikzset{every node/.style={fill=white},rectangle}
\draw[->,font=\scriptsize]
    (z) edge[loop above,impliesloop,out=135,in=45,looseness=8] node[vertice,rectangle,anchor=center]{$5$}(z)
    (n) edge[implies] (o)
    ;
\end{tikzpicture}};
\node (2four31) at (14,-15) {\begin{tikzpicture}[anchor=center,
    implies/.style={double,double equal sign distance,implies-implies},
    dot/.style={shape=circle,fill=black,minimum size=2pt,
                inner sep=0pt,outer sep=2pt}]
\node[vertice,circle] (B) at ( -1, 0) {$1$};
\node[vertice,circle] (C) at (  1, 0) {$1$};
\node[vertice,circle] (D) at ( -1, -2) {$2$};
\tikzset{every node/.style={fill=white},rectangle}
\draw[->,font=\scriptsize]
    (B) edge[implies] (D)
    (C) edge[implies]node[vertice,rectangle,anchor=center]{$3$} (B)    
    (D) edge[implies] (B)
    ;
\end{tikzpicture}};
\node (2four22) at (18,-15) {\begin{tikzpicture}[anchor=center,
    implies/.style={double,double equal sign distance,implies-implies},
    dot/.style={shape=circle,fill=black,minimum size=2pt,
                inner sep=0pt,outer sep=2pt}]
\node[vertice,circle] (B) at ( -1, 0) {$1$};
\node[vertice,circle] (C) at (  1, 0) {$1$};
\node[vertice,circle] (D) at ( 0, {-2*tan(45)}) {$2$};
\tikzset{every node/.style={fill=white},rectangle}
\draw[->,font=\scriptsize]
    (C) edge[implies]node[vertice,rectangle,anchor=center]{$3$} (B)    
    (D) edge[<->] (B)
    (D) edge[<->] (C)
    ;
\end{tikzpicture}};
\node (1four) at (24,-15) {\begin{tikzpicture}[
    implies/.style={double,double equal sign distance,implies-implies},
    dot/.style={shape=circle,fill=black,minimum size=2pt,
                inner sep=0pt,outer sep=2pt}]
\node[vertice,circle] (d) at ( 0, 0) {$1$};
\node[vertice,circle] (e) at ( 0, -2) {$3$};
\tikzset{every node/.style={fill=white},rectangle}
\draw[->,font=\scriptsize]
    (d) edge[implies]node[vertice,rectangle,anchor=center]{$3$} (e)
    ;
\end{tikzpicture}};
\tikzset{every node/.style={fill=white},rectangle}
\draw[->,font=\scriptsize]
	(4four) edge[dashed] (4four31)            
	(4four) edge[dashed] (4four22)            
	(4four31) edge[dashed] (4four211)
	(4four22) edge[dashed] (4four211)
	(4four211) edge[dashed] (4four1111)
	(3four) edge[dashed] (3four31)            
	(3four) edge[dashed] (3four22)            
	(3four31) edge[dashed] (3four211)
	(3four22) edge[dashed] (3four211)
	(2four) edge[dashed] (2four31)            
	(2four) edge[dashed] (2four22)
	(4four) edge[dashed] (3four)            
	(3four) edge[dashed] (2four)            
	(2four) edge[dashed] (1four) 
	(4four31) edge[dashed] (3four31)
	(3four31) edge[dashed] (2four31)
	(4four22) edge[dashed] (3four22)
	(3four22) edge[dashed] (2four22)
	(4four211) edge[dashed] (3four211)
    ;
\end{tikzpicture}
\caption{Degeneration graph of local quiver settings in $\iss_{\alpha(4,4)} \Z_2^{*4}$ with quiver settings}
\label{Degenerationgraph4,4quiver}
\end{sidewaysfigure}
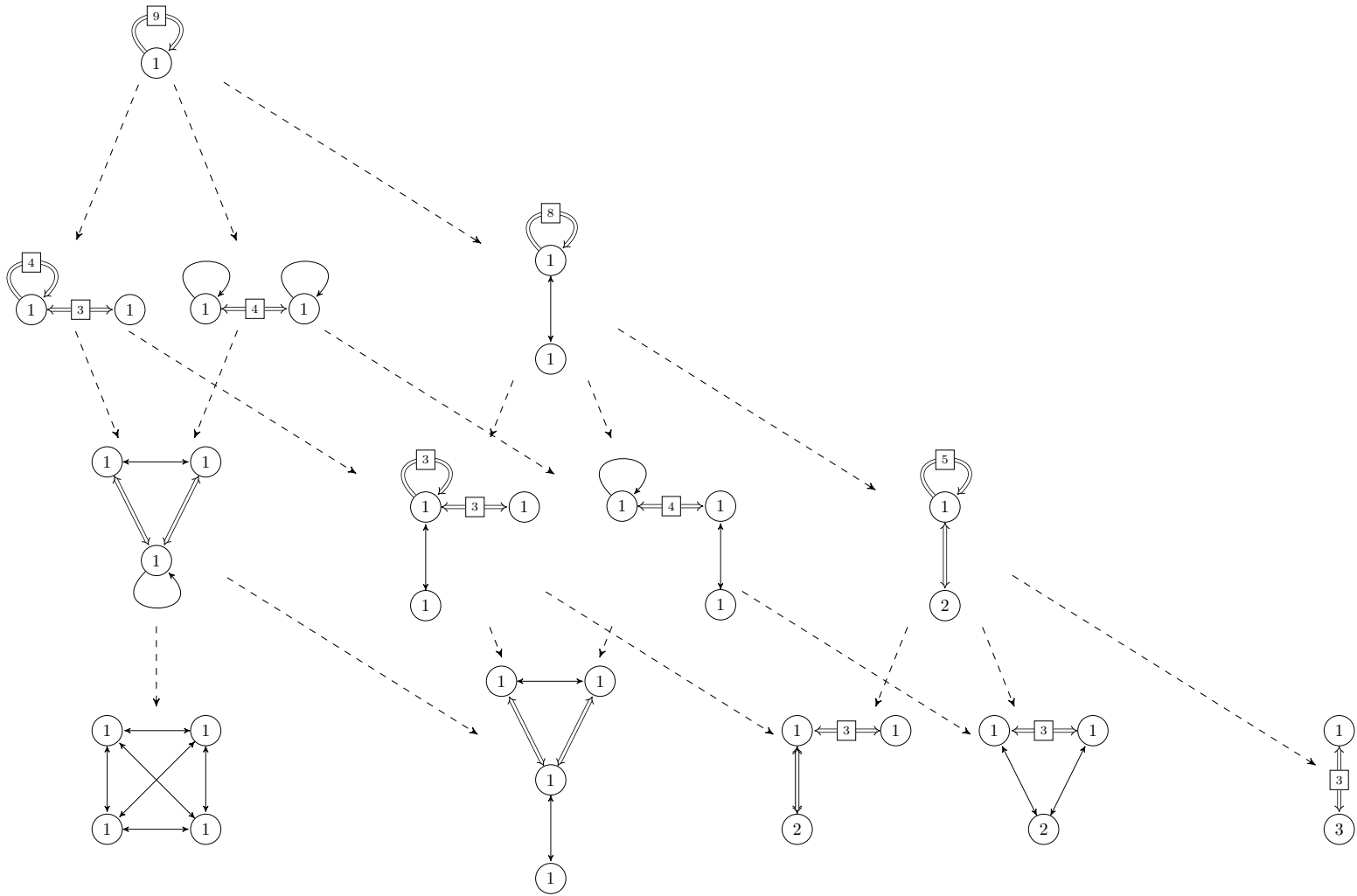
\newpage
\begin{sidewaysfigure}
\centering 
\begin{tikzpicture}[>=stealth', shorten >=1pt, auto,
    node distance=2.5cm, scale=3/4, 
    transform shape, align=center, 
    state/.style={circle, draw, minimum size=2cm}]
\node (A333) at (2,0) {\begin{tikzpicture}[
    implies/.style={double,double equal sign distance,implies-implies},
    impliesloop/.style={double,double equal sign distance,-implies},
    dot/.style={shape=circle,fill=black,minimum size=2pt,
                inner sep=0pt,outer sep=2pt}]
\node[vertice,circle] (a) at ( -1, 0) {$1$};
\node[vertice,circle] (b) at (  1, 0) {$1$};
\node[vertice,circle] (c) at ( 0, {-2*tan(45)}) {$1$};
\tikzset{every node/.style={fill=white},rectangle}
\draw[->,font=\scriptsize]
    (a) edge[implies]node[vertice,rectangle,anchor=center]{$9$} (b)
    (a) edge[implies]node[vertice,rectangle,anchor=center]{$9$} (c)
    (b) edge[implies]node[vertice,rectangle,anchor=center]{$9$} (c)    
    (a) edge[loop,impliesloop,out =-135,in=135,looseness=6]node[vertice,rectangle,anchor=center]{$4$} (a)
    (b) edge[loop,impliesloop,out =-45,in=45,looseness=6]node[vertice,rectangle,anchor=center]{$4$} (b)
	(c) edge[loop,impliesloop,out =-135,in=-45,looseness=6]node[vertice,rectangle,anchor=center]{$4$} (c)    
        ;
\end{tikzpicture}};
\node (A332) at (7,0) {\begin{tikzpicture}[
    implies/.style={double,double equal sign distance,implies-implies},
    impliesloop/.style={double,double equal sign distance,-implies},
    dot/.style={shape=circle,fill=black,minimum size=2pt,
                inner sep=0pt,outer sep=2pt}]
\node[vertice,circle] (d) at ( -1, 0) {$1$};
\node[vertice,circle] (e) at (  1, 0) {$1$};
\node[vertice,circle] (f) at ( 0, {-2*tan(45)}) {$1$};
\node[vertice,circle] (g) at ( 0, {-2/3*tan(45)}) {$1$};
\tikzset{every node/.style={fill=white},rectangle}
\draw[->,font=\scriptsize]
    (d) edge[implies]node[vertice,rectangle,anchor=center]{$9$} (e)
    (e) edge[implies]node[vertice,rectangle,anchor=center]{$9$} (f)
    (f) edge[implies]node[vertice,rectangle,anchor=center]{$9$} (d)    
    (d) edge[loop,impliesloop,out =-135,in=135,looseness=6]node[vertice,rectangle,anchor=center]{$4$} (d)
    (e) edge[loop,impliesloop,out =-45,in=45,looseness=6]node[vertice,rectangle,anchor=center]{$4$} (e)
	(f) edge[loop,impliesloop,out =-135,in=-45,looseness=6]node[vertice,rectangle,anchor=center]{$3$} (f)    
	(f) edge[<->] (g)
        ;
\end{tikzpicture}};
\node (A331) at (10,-5) {\begin{tikzpicture}[
    implies/.style={double,double equal sign distance,implies-implies},
    impliesloop/.style={double,double equal sign distance,-implies},
    dot/.style={shape=circle,fill=black,minimum size=2pt,
                inner sep=0pt,outer sep=2pt}]
\node[vertice,circle] (h) at ( -1, 0) {$1$};
\node[vertice,circle] (i) at (  1, 0) {$1$};
\node[vertice,circle] (j) at ( 0, {-2*tan(45)}) {$1$};
\node[vertice,circle] (k) at ( 0, {-2/3*tan(45)}) {$2$};
\tikzset{every node/.style={fill=white},rectangle}
\draw[->,font=\scriptsize]
    (h) edge[implies]node[vertice,rectangle,anchor=center]{$9$} (i)
    (i) edge[implies]node[vertice,rectangle,anchor=center]{$9$} (j)
    (j) edge[implies]node[vertice,rectangle,anchor=center]{$9$} (h)    
    (h) edge[loop,impliesloop,out =-135,in=135,looseness=6]node[vertice,rectangle,anchor=center]{$4$} (h)
    (i) edge[loop,impliesloop,out =-45,in=45,looseness=6]node[vertice,rectangle,anchor=center]{$4$} (i)
	(j) edge[implies] (k)
        ;
\end{tikzpicture}};
\node (A322) at (10,5) {\begin{tikzpicture}[
    implies/.style={double,double equal sign distance,implies-implies},
    impliesloop/.style={double,double equal sign distance,-implies},
    dot/.style={shape=circle,fill=black,minimum size=2pt,
                inner sep=0pt,outer sep=2pt}]
\node[vertice,circle] (l) at ( -1, 0) {$1$};
\node[vertice,circle] (m) at (  1, 0) {$1$};
\node[vertice,circle] (n) at ( 0, {-2*tan(45)}) {$1$};
\node[vertice,circle] (o) at ( 0, {-2/3*tan(45)}) {$2$};
\tikzset{every node/.style={fill=white},rectangle}
\draw[->,font=\scriptsize]
    (l) edge[implies]node[vertice,rectangle,anchor=center]{$9$} (m)
    (m) edge[implies]node[vertice,rectangle,anchor=center]{$9$} (n)
    (n) edge[implies]node[vertice,rectangle,anchor=center]{$8$} (l)    
    (l) edge[loop,impliesloop,out =-135,in=135,looseness=6]node[vertice,rectangle,anchor=center]{$3$} (l)
    (m) edge[loop,impliesloop,out =-45,in=45,looseness=6]node[vertice,rectangle,anchor=center]{$4$} (m)
	(n) edge[loop,impliesloop,out =-135,in=-45,looseness=6]node[vertice,rectangle,anchor=center]{$3$} (n)    
	(l) edge[<->] (o)
	(n) edge[<->] (o)
        ;
\end{tikzpicture}};
\node (A321) at (15,-5) {\begin{tikzpicture}[
    implies/.style={double,double equal sign distance,implies-implies},
    impliesloop/.style={double,double equal sign distance,-implies},
    dot/.style={shape=circle,fill=black,minimum size=2pt,
                inner sep=0pt,outer sep=2pt}]
\node[vertice,circle] (p) at ( -1, 0) {$1$};
\node[vertice,circle] (q) at (  1, 0) {$1$};
\node[vertice,circle] (r) at ( 0, {-2*tan(45)}) {$1$};
\node[vertice,circle] (s) at ( 0, {-2/3*tan(45)}) {$3$};
\tikzset{every node/.style={fill=white},rectangle}
\draw[->,font=\scriptsize]
    (p) edge[implies]node[vertice,rectangle,anchor=center]{$9$} (q)
    (q) edge[implies]node[vertice,rectangle,anchor=center]{$9$} (r)
    (r) edge[implies]node[vertice,rectangle,anchor=center]{$6$} (p)    
    (p) edge[loop,impliesloop,out =-135,in=135,looseness=6]node[vertice,rectangle,anchor=center]{$3$} (p)
    (q) edge[loop,impliesloop,out =-45,in=45,looseness=6]node[vertice,rectangle,anchor=center]{$4$} (q)
	(p) edge[<->] (s)
	(r) edge[implies] (s)
        ;
\end{tikzpicture}};
\node (A222) at (15,5) {\begin{tikzpicture}[
    implies/.style={double,double equal sign distance,implies-implies},
    impliesloop/.style={double,double equal sign distance,-implies},
    dot/.style={shape=circle,fill=black,minimum size=2pt,
                inner sep=0pt,outer sep=2pt}]
\node[vertice,circle] (t) at ( -1, 0) {$1$};
\node[vertice,circle] (u) at (  1, 0) {$1$};
\node[vertice,circle] (v) at ( 0, {-2*tan(45)}) {$1$};
\node[vertice,circle] (w) at ( 0, {-2/3*tan(45)}) {$3$};
\tikzset{every node/.style={fill=white},rectangle}
\draw[->,font=\scriptsize]
    (t) edge[implies]node[vertice,rectangle,anchor=center]{$8$} (u)
    (v) edge[implies]node[vertice,rectangle,anchor=center]{$8$} (u)
    (v) edge[implies]node[vertice,rectangle,anchor=center]{$8$} (t)    
    (t) edge[loop,impliesloop,out =-135,in=135,looseness=6]node[vertice,rectangle,anchor=center]{$3$} (t)
    (u) edge[loop,impliesloop,out =-45,in=45,looseness=6]node[vertice,rectangle,anchor=center]{$3$} (u)
	(v) edge[loop,impliesloop,out =-135,in=-45,looseness=6]node[vertice,rectangle,anchor=center]{$3$} (v)    
	(t) edge[<->] (w)
	(u) edge[<->] (w)
	(v) edge[<->] (w)
        ;
\end{tikzpicture}};
\node (A311) at (20,-5) {\begin{tikzpicture}[
    implies/.style={double,double equal sign distance,implies-implies},
    impliesloop/.style={double,double equal sign distance,-implies},
    dot/.style={shape=circle,fill=black,minimum size=2pt,
                inner sep=0pt,outer sep=2pt}]
\node[vertice,circle] (B) at ( -1, 0) {$1$};
\node[vertice,circle] (C) at (  1, 0) {$1$};
\node[vertice,circle] (D) at ( 0, {-2*tan(45)}) {$1$};
\node[vertice,circle] (E) at ( 0, {-2/3*tan(45)}) {$4$};
\tikzset{every node/.style={fill=white},rectangle}
\draw[->,font=\scriptsize]
    (B) edge[implies]node[vertice,rectangle,anchor=center]{$9$} (C)
    (C) edge[implies]node[vertice,rectangle,anchor=center]{$9$} (D)
    (D) edge[implies]node[vertice,rectangle,anchor=center]{$5$} (B)    
    (C) edge[loop,impliesloop,out =-45,in=45,looseness=6]node[vertice,rectangle,anchor=center]{$4$} (C)
	(B) edge[implies] (E)
	(D) edge[implies] (E)
        ;
\end{tikzpicture}};
\node (A221) at (20,5) {\begin{tikzpicture}[
    implies/.style={double,double equal sign distance,implies-implies},
    impliesloop/.style={double,double equal sign distance,-implies},
    dot/.style={shape=circle,fill=black,minimum size=2pt,
                inner sep=0pt,outer sep=2pt}]
\node[vertice,circle] (x) at ( -1, 0) {$1$};
\node[vertice,circle] (y) at (  1, 0) {$1$};
\node[vertice,circle] (z) at ( 0, {-2*tan(45)}) {$1$};
\node[vertice,circle] (A) at ( 0, {-2/3*tan(45)}) {$4$};
\tikzset{every node/.style={fill=white},rectangle}
\draw[->,font=\scriptsize]
    (x) edge[implies]node[vertice,rectangle,anchor=center]{$8$} (y)
    (x) edge[implies]node[vertice,rectangle,anchor=center]{$7$} (z)
    (z) edge[implies]node[vertice,rectangle,anchor=center]{$7$} (y)    
    (x) edge[loop,impliesloop,out =-135,in=135,looseness=6]node[vertice,rectangle,anchor=center]{$3$} (x)
    (y) edge[loop,impliesloop,out =-45,in=45,looseness=6]node[vertice,rectangle,anchor=center]{$3$} (y)
	(x) edge[<->] (A)
	(y) edge[<->] (A)
	(z) edge[implies] (A)
        ;
\end{tikzpicture}};
\node (A211) at (23,0) {\begin{tikzpicture}[
    implies/.style={double,double equal sign distance,implies-implies},
    impliesloop/.style={double,double equal sign distance,-implies},
    dot/.style={shape=circle,fill=black,minimum size=2pt,
                inner sep=0pt,outer sep=2pt}]
\node[vertice,circle] (F) at ( -1, 0) {$1$};
\node[vertice,circle] (G) at (  1, 0) {$1$};
\node[vertice,circle] (H) at ( 0, {-2*tan(45)}) {$1$};
\node[vertice,circle] (I) at ( 0, {-2/3*tan(45)}) {$5$};
\tikzset{every node/.style={fill=white},rectangle}
\draw[->,font=\scriptsize]
    (F) edge[implies]node[vertice,rectangle,anchor=center]{$7$} (G)
    (G) edge[implies]node[vertice,rectangle,anchor=center]{$7$} (H)	
    (H) edge[implies]node[vertice,rectangle,anchor=center]{$5$} (F)    
    (G) edge[loop,impliesloop,out =-45,in=45,looseness=6]node[vertice,rectangle,anchor=center]{$3$} (G)
	(F) edge[implies] (I)
	(G) edge[<->] (I)
	(H) edge[implies] (I)
        ;
\end{tikzpicture}};
\node (A111) at (28,0) {\begin{tikzpicture}[
    implies/.style={double,double equal sign distance,implies-implies},
    impliesloop/.style={double,double equal sign distance,-implies},
    dot/.style={shape=circle,fill=black,minimum size=2pt,
                inner sep=0pt,outer sep=2pt}]
\node[vertice,circle] (J) at ( -1, 0) {$1$};
\node[vertice,circle] (K) at (  1, 0) {$1$};
\node[vertice,circle] (L) at ( 0, {-2*tan(45)}) {$1$};
\node[vertice,circle] (M) at ( 0, {-2/3*tan(45)}) {$6$};
\tikzset{every node/.style={fill=white},rectangle}
\draw[->,font=\scriptsize]
    (J) edge[implies]node[vertice,rectangle,anchor=center]{$5$} (K)
    (K) edge[implies]node[vertice,rectangle,anchor=center]{$5$} (L)	
    (L) edge[implies]node[vertice,rectangle,anchor=center]{$5$} (J)    
	(J) edge[implies] (M)
	(K) edge[implies] (M)
	(L) edge[implies] (M)
        ;
\end{tikzpicture}};
\tikzset{every node/.style={fill=white},rectangle}
\draw[->,font=\scriptsize]
    (A333) edge[dashed] (A332)
    (A332) edge[dashed] (A331)
    (A332) edge[dashed] (A322)    
    (A331) edge[dashed] (A321)
    (A322) edge[dashed] (A321)
    (A322) edge[dashed] (A222)
    (A321) edge[dashed] (A221)
    (A321) edge[dashed] (A311)
    (A222) edge[dashed] (A221)
    (A221) edge[dashed] (A211)
    (A311) edge[dashed] (A211)
    (A211) edge[dashed] (A111)
    ;
\end{tikzpicture}
\caption{Degeneration graph of local quiver settings in $\iss_{\alpha(9,9)} \Z_2^{*9}$,(3,3,3)-diagonal}
\label{Degenerationgraph333youngtableau}
\end{sidewaysfigure}
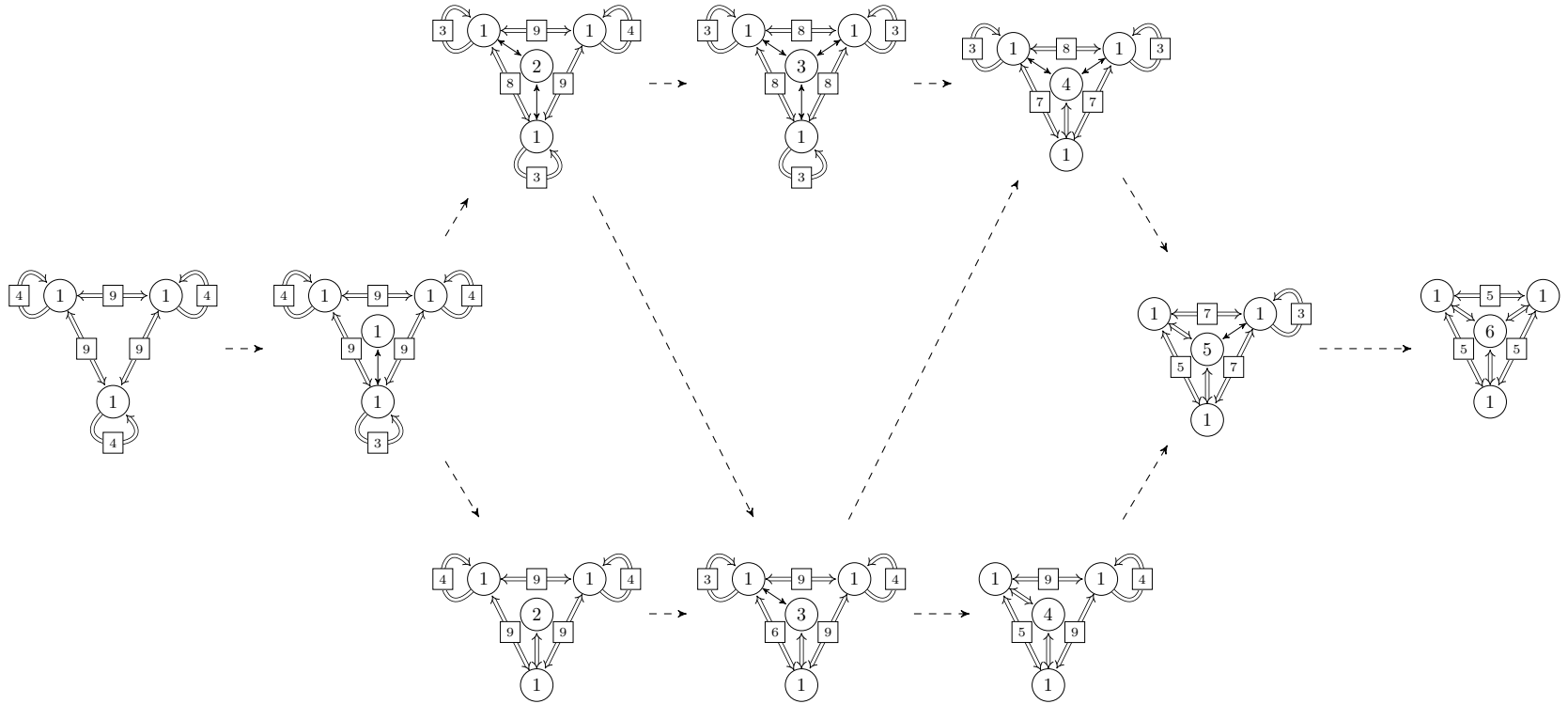

\end{document}